\newtheorem{thm}{Theorem}
\newtheorem{prop}[thm]{Proposition}
\newtheorem{lem}[thm]{Lemma}
\newtheorem{cor}[thm]{Corollary}
\newtheorem*{defi*}{Definition}
\newtheorem*{hyp*}{Assumptions}
\newcommand{\x}{\partial_x}
\newcommand{\y}{\partial_y}
\newcommand{\po}{\left(}
\newcommand{\pf}{\right)}
\newcommand{\co}{\left[}
\newcommand{\cf}{\right]}
\newcommand{\cco}{\llbracket}
\newcommand{\ccf}{\rrbracket}
\newcommand{\R}{\mathbb R}
\newcommand{\dd}{\text{d}}
\newcommand{\A}{\mathcal A}
\newcommand{\na}{\nabla}
\newcommand{\X}{\textbf{X}}
\newcommand{\tX}{\widetilde{\textbf{X}}}
\newcommand{\hX}{\widehat{\textbf{X}}}
\title{Generalized $\Gamma$ calculus and application to interacting particles on a graph.}
\author{Pierre Monmarché\footnote{Laboratoire Jacques-Louis Lions, Sorbonne Universit\'e, 4 place Jussieu 75005 Paris, France. Email:
 			pierre.monmarche@upmc.fr}}
\date{Februar 22, 2018\footnote{First version October 20, 2015}}
\begin{document}

\maketitle

\begin{abstract}
The classical Bakry-\'Emery calculus is extended to study, for degenerated (non-elliptic, non-reversible, or non-diffusive) Markov processes, questions such as hypoellipticity, hypocoercivity, functional  inequalities or Wasserstein contraction. In particular we obtain the optimal speed of convergence to equilibrium for any ergodic Ornstein-Uhlenbeck process, which is given by the spectral gap of the drift matrix and the size of the corresponding Jordan blocks. We also study chains of $N$ interacting overdamped particles and establish for their invariant measures log-Sobolev inequalities with constants of order $N^2$, which is optimal.
\end{abstract}

\section{Introduction and overview}

The $\Gamma$ calculus has been introduced by Bakry and \'Emery in \cite{BakryEmery}. When $L$ is a Markov generator, define the carr\'e du champ operator by
\[\Gamma (f,g) = \frac12 \po L(fg)  - g L f - fL g\pf\]
and the $\Gamma_2$ operator by
\[\Gamma_2 (f,g) = \frac12 \po L \Gamma (f,g) - \Gamma(f,Lg) - \Gamma(g,Lf)\pf.\] 
These operators are related to various properties of $L$, of its associated semigroup $(P_t)_{t\geq 0} = \po e^{tL} \pf_{t\geq 0}$ and, as the case may be, its invariant measure $\mu$: hypercontractivity, regularity, long-time behaviour, concentration of measure, etc. The main reference on this topic is certainly the book \cite{BakryGentilLedoux} of Bakry, Gentil and Ledoux; nevertheless in the rest of this work we will mostly refer to \cite{BolleyGentil}, which allow for a shorter introduction. This classical approach is for instance very efficient to study the reversible elliptic diffusion with generator
\[L = - \na U.\na + \Delta\]
where $\Delta$ is the Laplace-Beltrami operator of a Riemannian manifold $\mathcal M$ and $U$ is a smooth potential on $\mathcal M$; especially when $U$ is strongly convex.

While this theory is still a vivid area of research, in parallel of its developments, interest has grown for some decades in so-called "degenerated" processes for which the classical methods do not work, at least not in their usual form. The typical example is the kinetic diffusion on $\R^{2d}$ with generator
\[L f(x,y) = \co y.\na_x + (\na_x U(x) + y). \na_y + \Delta_y \cf f(x,y), \]
which has inspired an extensive amount of works, among which we will only cite for now the memoir \cite{Villani2009}. Piecewise deterministic Markov processes form another large class of such degenerated processes (see \cite{MalrieuPDMP}).
 
 The highlights of Villani's memoir include the idea that when classical quantities (variance, entropy, energy\dots) do not behave well under the action of the semigroup, it may be helpful to work with ad hoc distorted quantities for which usual techniques may apply. This heuristic have proved useful in a variety of situations, in various ways (see for instance \cite{AntonErb,DMS2009,MonmarchePDMP}). The drawback is a (possibly drastic) complexification of the computations.

\bigskip

The aim of this paper is to give a somewhat unified framework in which the computations appear relatively nice. This is done in Section \ref{SectionGamma}. The strategy closely follows the Bakry-\'Emery theory (and therefore we call it "generalised $\Gamma$ calculcus"), except we consider a broader variety of functionals. This idea is already present in \cite{Baudoin,AntonErb,MonmarchePDMP}.

\bigskip

For the sake of clarity we will try to avoid technicalities, and in particular we won't address the question of the optimal functional spaces for which our results hold. All the test functions will be considered in a space $\A$ for which very strong assumptions will hold. In applications, it may often be taken as the set of $\mathcal{C}^\infty$ functions which are bounded below by a positive constant and whose derivatives all grow at most polynomially at infinity.

In order to give an overview of possible methods, examples of applications will systematically be provided. Most of them come from existing works. 
As we will see the $\Gamma$ calculus point of view sometimes improves and almost always simplifies them. For instance, consider the case of Ornstein-Uhlenbeck processes, with generator
\[L f(x) = -(Bx).\na f(x) + \text{div}\po D \na f\pf(x),\]
where $B$ and $D$ are constant matrices. Suppose ker$D$ does not contain any non trivial subspace which is invariant by $B^T$, and $\rho = \inf\{\Re (\nu), \ \nu \text{ eigenvalue of }B\} >0$. Then the associated semigroup $(P_t)_{t\geq 0}$ admits a unique invariant measure $\mu$.  Denote by
\[\text{Ent}_\mu f = \int f \ln f \dd \mu - \int f\dd \mu \ln \int f \dd \mu\] 
the relative entropy of a function $f$ with respect to $\mu$. Arnold and his co-authors (\cite{AntonErb,Arnold2015}) proved (working with $L'$ the dual of $L$ with respect to the Lebesgue measure, but this boils down to the same) that for all $\varepsilon>0$ there exists a constant $c_\varepsilon>0$ such that for all $f$ with $\text{Ent}_\mu f < \infty$,
\[\text{Ent}_\mu( P_t f) \leq c_\varepsilon e^{-2(\rho-\varepsilon) t} \text{Ent}_\mu f,\]
where $\varepsilon$ may be taken equal to $0$ with a finite $c_0$ if $B$ is diagonalisable and more generally if none of the eigenvalues in $\{\Re(\lambda) = \rho\}$ is defective. We will prove (Corollary \ref{CorOU} below) that in fact, when $N$ is the maximal size of a Jordan block associated to an eigenvalue in $\{\Re(\lambda) = \rho\}$ in the decomposition of $B$, then there exists a constant $c$ such that 
\[\text{Ent}_\mu( P_t f) \leq c (1 + t^{2(N-1)}) e^{-2\rho t} \text{Ent}_\mu f.\]
The power $t^{2(N-1)}$ is optimal (see Proposition \ref{PropOU}).

\bigskip

Apart from these revisited versions of known results, our main interest lies in the study of systems of attracting particles on a graph. This is done in Section \ref{SectionGraph}. Let $W$ be an even, smooth, strongly convex potential on $\R^d$ and let $\textbf{X}=(X_i)_{i=0..N}$ be a chain of $N+1$ particles interacting with their neighbours via the dynamics
\[\left\{\begin{array}{rcl}
\dd X_0 & =& -\na W(X_0-X_{1}) \dd t + \sqrt T_0 \dd  B_t\\
\dd X_i  & = & - \po \na W(X_i - X_{i-1}) + \na W(X_i - X_{i+1}) \pf \dd t\hspace{20pt}\text{for }i=1..(N-1)\\
\dd X_N & =& -\na W(X_N-X_{N-1}) \dd t + \sqrt T_N \dd \widetilde  B_t
\end{array}\right. \]
where $B$ and $\widetilde B$ are independant standard Brownian motion on $\R^d$ and $T_0,T_N>0$.

The process $\textbf{X}$ is a toy model related to the chains of Hamiltonian oscillators introduced by Eckmann, Pillet  and Rey-Bellet in \cite{EckPRB1999} (see also the more recent \cite{HairerHeatBath} and references within). Except in some particular cases, the invariant measures of such processes are not explicit, and therefore it is not clear, for instance, whether they satisfy some functional inequality such as the Poincar\'e or log-Sobolev ones (see the discussion in \cite[$\S$ 9.2 p.67]{Villani2009}; also note that the results of \cite{Cattiaux2008,CattiauxGuillinPAZ} do not apply since the dynamics is not reversible). 

If $\overline X = \frac1{N+1} \sum_{i=0}^N X_i$ is the center of mass of the system, then $\textbf{X}-\overline X$ is ergodic. The process $\textbf{X}$ is highly degenerated, since the randomness of a system in $\R^{dN}$ is only given by a Brownian motion on $\R^{2d}$. Nevertheless we will prove for $\textbf{X} -\overline X$ (and for similar models) the invariant measure $\mu_N$ satisfies a log-Sobolev inequality
\begin{eqnarray*}
\text{Ent}_{\mu_N} f &  \leq  & c N^2 \int \frac{|\na f|^2}{f} \dd \mu_N\hspace{30pt}\forall f\in\A \text{ s.t. } \sum_{i=0}^N \na_ i f = 0,
\end{eqnarray*}
where $c$ does not depend on $N$ (the condition $\sum \na_i f=0$ compensate the fact the center of mass is fixed). The $N^2$ is optimal since, when $W(x) = \frac12 |x|^2$,  it can be checked the optimal constant in the log-Sobolev inequality is indeed of order $N^2$.

\subsubsection*{General notations and conventions}

The following holds throughout the whole paper. A vector $x\in\R^d$ will always be equated to a 1-column matrix, and therefore the scalar product in $\R^d$ will indifferently be written $x.y$ or $x^Ty$ where $A^T$ denotes the transpose of a matrix $A$. Unless otherwise specified, $|x|$ is the Euclidian norm of $x$, $|A|$ is the operator norm $|A| = \sup \{|A x|,\ |x|=1\}$ and $\sigma(A)$ is the spectrum of $A$. When $Q$ is a quadratic operator, we still call $f,g \mapsto Q(f,g) = \frac14\po Q(f+g)-Q(f-g)\pf$ the associated bilinear symmetric operator obtained by polarization. When $A$ and $B$ are symmetric matrices and $\rho \in \R$ then $A\geq B$ (resp. $A\geq \rho$) means $x^T A x \geq x^T B x$ (resp. $\geq \rho |x|^2$) for all $x\in\R^d$.  We note $\mathcal P(E)$ the set of probability laws on a space $E$ and for $\mu\in\mathcal P(E)$ in some cases we use the operator notation $\mu f = \int f\dd \mu$. When $x\mapsto b(x)$ is a smooth vector field on $\R^d$ we note $J_b(x) = (\partial_i b_j(x))_{1\leq i,j\leq d}$  the Jacobian matrix of $b$ (the $i$ is for the line, the $j$ for the column). In particular, if $b(x) = A x$ with a constant matrix $A$, then $J_b(x) = A^T$ for all $x$.

\section{Gamma Calculus}\label{SectionGamma}

We recall here some ideas from \cite{MonmarcheRecuitHypo}. For a bounded measurable $f : E\rightarrow \R$, let $P_t f(x) = \mathbb E\po f(X_t)\ | \ X_0=x \pf$ be a Markov semi-group associated to a Markov process $X$ on a Polish space $E$, $Lf(x) = \lim_{t\rightarrow0} \frac1 t(P_t f(x) - f(x))$ be its generator for $f$ in $\mathcal D(L)$ the domain of $L$. Assume $\mathcal D(L)$ contains a core $\A$ which is an algebra fixed by $L$ and $P_t$. Let $\A_+=\{ f\in\A, \ f \geq 0\}$ and let $\Phi : \A_+ \rightarrow \A$ be differentiable with respect to pointwise convergence topology with differential operator $\text{d} \Phi$, in the sense $\lim_{s\rightarrow0}\frac1s\po \Phi(f+s g) - \Phi(f)\pf(x) = \po \text{d} \Phi(f).g\pf(x)$ for all $x\in E, f,g\in\A_+$. For $f\in\A_+$ we define
\[\Gamma_{L,\Phi}(f) = \frac12 \po L \Phi(f) - \text d\Phi(f).Lf\pf\]
and we suppose $\Gamma_{L,\Phi}(f) \in \A$. When there is no ambiguity on the generator we only write $\Gamma_\Phi$, and when $\Phi(f) = f^2+af+b$ with $(a,b)\in\R^2$ we retrieve the usual carr\'e du champ operator which is simply denoted by $ \Gamma$. 
When $\Phi(f) = \Gamma(f)$, we retrieve the classical Bakry-\'Emery $\Gamma_2$ operator, since for a quadratic form $Q(f)$, $\text d Q(f).g = 2 Q(f,g)$.

\bigskip

\textbf{Remark:} The factor $\frac12$ in the definition of $\Gamma_{L,\Phi}$ is disputable, since with this definition we will most of the time deal with $2\Gamma_{L,\Phi}$, but we wanted to be consistent with the classical definitions of $\Gamma$ and $\Gamma_2$. The $\frac12$ in the carr\'e du champ ensures that $\Gamma f = |\na f|^2 $ when $L=\Delta$ is the Laplace-Beltrami of a Riemannian manifold. Having said that, the generator of a standard Brownian motion is $\frac12 \Delta$, which advocates for a definition with no $\frac12$.

\subsection{Motivations}

Without claiming to be comprehensive, we want to emphasize in this section some reasons why $\Gamma_\Phi$ operators may be useful and especially why it may be interesting to work with more general functions than only $\Gamma$ (or functions based on $\Gamma$, such as in \cite{BolleyGentil} and references within). The examples below come from other works to which we will refer for complete proofs and computations.

\subsubsection{Sub-commutation $\Phi$/semigroup}

The $\Gamma_\Phi$ operator naturally appears in the interpolation from $\Phi(P_t f)$ to $P_t \Phi(f)$. Indeed, for $f\in\A_+$, $t>0$, $x\in E$ and $s\in[0,t]$, let $\psi(s) = P_s \Phi( P_{t-s} f)(x)$. Then
\begin{eqnarray*}
\psi'(s) & = & 2 P_s \Gamma_\Phi(P_{t-s} f) (x).
\end{eqnarray*}
As a direct consequence, we have the following:
\begin{lem}\label{LemGammaCurvature}
The so-called curvature condition $\Gamma_{\Phi} \geq \rho \Phi$ for some $\rho \in \R$, is equivalent to 
\begin{eqnarray}\label{EqIntertwinning}
\forall f\in \A_+\hspace{15pt}\Phi\po P_t f \pf & \leq & e^{-2\rho t}P_t \Phi\po f\pf.
\end{eqnarray}
\end{lem}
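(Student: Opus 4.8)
The plan is to exploit the interpolation identity $\psi'(s) = 2 P_s \Gamma_\Phi(P_{t-s}f)(x)$ that precedes the statement, since the lemma is essentially just the Grönwall-type consequence of it. First I would prove the implication "curvature $\Rightarrow$ intertwining". Assume $\Gamma_\Phi(g) \geq \rho\,\Phi(g)$ for all $g \in \A_+$. Fix $f \in \A_+$, $t > 0$, $x \in E$, and set $\psi(s) = P_s \Phi(P_{t-s}f)(x)$ for $s \in [0,t]$, so that $\psi(0) = \Phi(P_t f)(x)$ and $\psi(t) = P_t \Phi(f)(x)$. Applying the curvature hypothesis to $g = P_{t-s}f \in \A_+$ (using that $\A$, hence $\A_+$, is stable under $P_t$) gives $\Gamma_\Phi(P_{t-s}f) \geq \rho\,\Phi(P_{t-s}f)$ pointwise; then, since $P_s$ is positivity-preserving, $\psi'(s) = 2 P_s \Gamma_\Phi(P_{t-s}f)(x) \geq 2\rho\, P_s \Phi(P_{t-s}f)(x) = 2\rho\,\psi(s)$. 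Hence $s \mapsto e^{-2\rho s}\psi(s)$ is nondecreasing on $[0,t]$, so $\psi(0) \leq e^{-2\rho t}\psi(t)$, which is exactly \eqref{EqIntertwinning} evaluated at $x$; since $x$ is arbitrary, the operator inequality follows.

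For the converse, assume \eqref{EqIntertwinning} holds for all $f \in \A_+$. Fix $f \in \A_+$ and $x \in E$, and consider $\theta(t) := e^{-2\rho t} P_t \Phi(f)(x) - \Phi(P_t f)(x)$. By hypothesis $\theta(t) \geq 0$ for all $t \geq 0$, and $\theta(0) = 0$. Therefore $\theta$ has a minimum at $t = 0$, so $\theta'(0) \geq 0$. Differentiating at $t = 0$, using $\frac{d}{dt}\big|_{t=0} P_t \Phi(f) = L\Phi(f)$ and $\frac{d}{dt}\big|_{t=0}\Phi(P_t f) = \mathrm{d}\Phi(f).Lf$ (the latter by the chain rule for the pointwise differential $\mathrm{d}\Phi$ together with $\frac{d}{dt}\big|_{t=0}P_t f = Lf$), one gets
\[
\theta'(0) = -2\rho\,\Phi(f)(x) + L\Phi(f)(x) - \mathrm{d}\Phi(f).Lf(x) = -2\rho\,\Phi(f)(x) + 2\Gamma_\Phi(f)(x) \geq 0,
\]
that is, $\Gamma_\Phi(f)(x) \geq \rho\,\Phi(f)(x)$. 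As $x$ and $f \in \A_+$ are arbitrary, the curvature condition holds.

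The main obstacle is not conceptual but regularity-theoretic: one must justify that $s \mapsto \psi(s)$ (and $t \mapsto \theta(t)$) is genuinely differentiable with the claimed derivative, that the differentiation of $\Phi(P_t f)$ at $t = 0$ produces $\mathrm{d}\Phi(f).Lf$ via a chain rule valid only for the pointwise-convergence differential $\mathrm{d}\Phi$, and that $P_s$ commutes with the relevant limits — all of which rests on the blanket hypothesis that $\A$ is a core stable under $L$ and $P_t$, with $\Gamma_{L,\Phi}(f) \in \A$, and on the stated convention that technicalities about optimal function spaces are suppressed. Granting that framework, the argument is the standard Bakry–Émery interpolation scheme transported verbatim to the operator $\Gamma_\Phi$ in place of $\Gamma_2$, and the two implications above complete the equivalence.
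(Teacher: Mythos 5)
Your proof is correct and follows exactly the same strategy as the paper's: Grönwall applied to $\psi(s)=P_s\Phi(P_{t-s}f)$ for the forward implication, and differentiating \eqref{EqIntertwinning} at $t=0$ for the converse. You merely spell out the details that the paper's two-line proof compresses, and your remarks on the suppressed regularity issues are consistent with the paper's stated convention of avoiding such technicalities.
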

\begin{proof}
If the curvature condition holds, the Gronwall's Lemma yields $\psi(0) \leq e^{-2\rho t}\psi(t)$, which is \eqref{EqIntertwinning}. Differentiating \eqref{EqIntertwinning} at $t=0$ yields the curvature condition.
\end{proof}

\textbf{Example 1:} Consider the TCP process on $\R_+$ with generator
\begin{eqnarray*}
Lf(x) & = & f'(x) + \lambda \po \int_{h=0}^\infty f\po h x\pf\dd \nu( h) - f(x)\pf
\end{eqnarray*}
with $\lambda>0$, an $\nu\in\mathcal P(\R_+)$. For instance with $\nu = \delta_{h_0}$, we compute $\Gamma f(x) = \frac\lambda2 \po f(h_0 x) - f(x)\pf^2$ and (possibly with the help of Lemma~\ref{LemGammaQuadratic} below)
\[\Gamma_2 f(x) = \frac{\lambda^2}4\po \po f(h_0^2 x) - 2 f(h_0 x) + f(x) \pf^2 + 2(h_0-1)f'(h_0 x)\po f(h_0 x)-f(x)\pf\pf,\]
so that no curvature condition $\Gamma_2 \geq \rho \Gamma$ can hold. In contrast, with $\Phi(f) = |\na f|^2$, it is proved in \cite[Proposition 15]{MonmarchePDMP} that $\Gamma_\Phi \geq \frac{\lambda}2 \po 1 - \int h^2 \dd \nu(h)\pf\Phi$ and thus
\begin{eqnarray*}
|\na P_t f|^2 & \leq & e^{-\lambda \po 1 - \int h^2 \dd \nu(h)\pf t} P_t|\na f|^2.
\end{eqnarray*}

\subsubsection{Functional inequalities}
 
In the general case, if $\mu$ is an invariant law of $P_t$, integrating \eqref{EqIntertwinning} reads
\begin{eqnarray*}
\int \Phi(P_t f) \dd \mu & \leq & e^{-2\rho t} \int \Phi(f) \dd \mu.
\end{eqnarray*}
We say $P_t$ is ergodic if it admits a unique invariant law $\mu$ such that for all $f\in \A,x\in E$, $P_t f(x) \rightarrow \int f \dd \mu$ as $t\rightarrow \infty$. In this case we suppose $\A \subset L^2(\mu)$.

\begin{lem}\label{LemInegalitePoincare}
Suppose $P_t$ is ergodic and $\Gamma_{\Phi_1}(P_t f) \leq \gamma(t) P_t \Phi_2(f)$ for all $f\in\A_+$, with $\gamma \in L^1(\R_+)$. Then
\begin{eqnarray}\label{EqInegalIntegre}
\forall f\in\A_+,\hspace{20pt}\int \Phi_1(f) \dd \mu - \Phi_1\po \int f \dd \mu\pf & \leq & 2\po\int_0^\infty \gamma(s) \dd s \pf \int \Phi_2(f) \dd \mu.
\end{eqnarray}
In particular if the "generalized $\Gamma_2$ condition" $\Gamma_{\Gamma_{\Phi_1}} \geq \rho \Gamma_{\Phi_1}$ holds with $\rho>0$, then
\begin{eqnarray*}
\forall f\in\A_+,\hspace{20pt}\int \Phi_1(f) \dd \mu - \Phi_1\po \int f \dd \mu\pf & \leq & \frac1\rho \int \Gamma_{\Phi_1}(f) \dd \mu.
\end{eqnarray*} 
\end{lem}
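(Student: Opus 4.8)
The natural approach is to rerun the interpolation argument used just above for a single $\Phi$, now keeping $\Phi_1$, and then integrate the resulting identity against the invariant measure $\mu$. Fix $f\in\A_+$ and $t>0$, and set $\psi(s)=P_s\Phi_1(P_{t-s}f)$ for $s\in[0,t]$. Exactly as in the subsection above, $\psi'(s)=2P_s\Gamma_{\Phi_1}(P_{t-s}f)$, so integrating over $[0,t]$ gives
\begin{eqnarray*}
P_t\Phi_1(f)-\Phi_1(P_tf) & = & 2\int_0^t P_s\Gamma_{\Phi_1}(P_{t-s}f)\dd s.
\end{eqnarray*}

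Next I would integrate this against $\mu$ and use the invariance $\mu P_s=\mu$: on the left-hand side it turns $\mu\big(P_t\Phi_1(f)\big)$ into $\mu\big(\Phi_1(f)\big)$, and on the right-hand side (after a Fubini exchange) each $\mu\big(P_s\Gamma_{\Phi_1}(P_{t-s}f)\big)$ becomes $\mu\big(\Gamma_{\Phi_1}(P_{t-s}f)\big)$. Then the hypothesis $\Gamma_{\Phi_1}(P_{t-s}f)\leq\gamma(t-s)P_{t-s}\Phi_2(f)$ together with one more use of the invariance yields
\begin{eqnarray*}
\int\Phi_1(f)\dd\mu-\int\Phi_1(P_tf)\dd\mu & \leq & 2\left(\int_0^t\gamma(t-s)\dd s\right)\int\Phi_2(f)\dd\mu \ = \ 2\left(\int_0^t\gamma(u)\dd u\right)\int\Phi_2(f)\dd\mu.
\end{eqnarray*}

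It then remains to let $t\to\infty$. Since $P_t$ is ergodic, $P_tf(x)\to\int f\dd\mu$ for every $x$, hence $\Phi_1(P_tf)\to\Phi_1\po\int f\dd\mu\pf$ pointwise; under the strong integrability assumptions on $\A$ one upgrades this to $\int\Phi_1(P_tf)\dd\mu\to\Phi_1\po\int f\dd\mu\pf$, while $\int_0^t\gamma(u)\dd u\to\int_0^\infty\gamma(u)\dd u$ because $\gamma\in L^1(\R_+)$. This gives \eqref{EqInegalIntegre}. For the last assertion, apply Lemma~\ref{LemGammaCurvature} with $\Phi$ replaced by $\Gamma_{\Phi_1}$: the condition $\Gamma_{\Gamma_{\Phi_1}}\geq\rho\Gamma_{\Phi_1}$ is precisely the corresponding curvature condition, hence equivalent to $\Gamma_{\Phi_1}(P_tf)\leq e^{-2\rho t}P_t\Gamma_{\Phi_1}(f)$ for all $f\in\A_+$. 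This is exactly the hypothesis of the lemma with $\Phi_2=\Gamma_{\Phi_1}$ and $\gamma(t)=e^{-2\rho t}$, which lies in $L^1(\R_+)$ when $\rho>0$, with $\int_0^\infty e^{-2\rho s}\dd s=\frac1{2\rho}$; plugging this into \eqref{EqInegalIntegre} produces exactly the bound $\frac1\rho\int\Gamma_{\Phi_1}(f)\dd\mu$.

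The computations involved (the formula for $\psi'$, the repeated use of $\mu P_s=\mu$, and the Fubini exchange) are routine, and the reduction of the ``generalized $\Gamma_2$'' case to the general one is immediate once Lemma~\ref{LemGammaCurvature} is in hand. The only genuinely delicate point is the $t\to\infty$ limit, specifically the convergence $\int\Phi_1(P_tf)\dd\mu\to\Phi_1\po\int f\dd\mu\pf$: this needs a domination / uniform-integrability argument and is exactly where the unspecified ``very strong assumptions'' on $\A$ are used. Within the working framework of the paper this causes no trouble, but it is the step one would have to treat with care in a fully detailed proof.
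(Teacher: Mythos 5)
Your proof is correct and follows the same interpolation $\psi(s)=P_s\Phi_1(P_{t-s}f)$ and the same limiting argument, but you reorganize the two final steps: you integrate against $\mu$ first and only then let $t\to\infty$, whereas the paper keeps the estimate pointwise in $x$, obtaining the local bound $P_t\Phi_1(f)-\Phi_1(P_tf)\leq 2\big(\int_0^t\gamma\big)\,P_t\Phi_2(f)$ (recorded as \eqref{EqInegalPonctuelle} and commented on in the ensuing remark), and then simply takes the pointwise limit in $x$, using ergodicity in the form $P_tg(x)\to\mu g$ for $g=\Phi_1(f),\Phi_2(f)$ and $P_tf(x)\to\mu f$. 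This ordering is not purely cosmetic: your route requires, as you correctly flag, the additional passage $\int\Phi_1(P_tf)\,\dd\mu\to\Phi_1(\mu f)$, i.e.\ a dominated-convergence or uniform-integrability step, which the paper's pointwise limit sidesteps entirely; the paper's ordering thus slightly lowers the technical burden and also yields a local inequality of independent interest. The reduction of the generalized $\Gamma_2$ case via Lemma~\ref{LemGammaCurvature} applied to $\Gamma_{\Phi_1}$, with $\Phi_2=\Gamma_{\Phi_1}$ and $\gamma(t)=e^{-2\rho t}$, is identical in both.
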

\begin{proof}
If $\psi_1(s) = P_s \Phi_1(P_{t-s} f)$,
\begin{eqnarray}\label{EqInegalPonctuelle}
P_t \Phi_1(f) - \Phi_1(P_t f) & = & \psi_1(t) - \psi_1(0) \notag\\
&= & 2 \int_0^t P_s \Gamma_{\Phi_1}(P_{t-s} f) \dd s \notag\\
&  \leq &  2\po \int_0^t \gamma(s)\dd s\pf P_t \Phi_2(f).
\end{eqnarray}
Conclusion follows when $t\rightarrow\infty$. 
When the generalized $\Gamma_2$ condition holds, according to Lemma \ref{LemGammaCurvature} we may take $\Phi_2 = \Gamma_{\Phi_1}$ and $\gamma(t) = e^{-2\rho t}$.
\end{proof}
\textbf{Remark:} An inequality of the form \eqref{EqInegalPonctuelle} is called a local functional inequality: it is a functional inequality satisfied by all the measures $\po P_t(x) \pf_{x\in E} = \po \delta_x P_t \pf_{x\in E}$, uniformly in $x\in E$. By contrast an inequality of the form \eqref{EqInegalIntegre} may be called an integrated functional inequality. A local inequality may hold even if $\gamma \notin L^1(\R_+)$: for instance for the Brownian motion in $\R^d$, $L = \frac12 \Delta$ satisfies the curvature condition $\Gamma_2 \geq 0$, and thus $P_t f^2 - (P_tf)^2 \leq t P_t |\na f|^2$.

\bigskip

\textbf{Example 2:} Consider a general Ornstein-Uhlenbeck process on $\R^d$, namely a diffusion on $\R^d$ with generator
\begin{eqnarray}\label{EqOUgenerateur}
Lf(x) & = & - (Bx). \na f(x) + \text{div} \po D \na f\pf(x)
\end{eqnarray}
where $B,D\in\mathcal M_{d\times d}( \R)$, $D$ is symmetric positive semidefinite. When ker$D$ does not contain any non-trivial subspace which is invariant by $B$, the process is hypoelliptic (cf. \cite[p. 148]{Hormander1967}). The carr\'e du champ operator is $\Gamma f = (\na f)^T D \na f$ and if $D$ is not definite positive the curvature condition $\Gamma_2 \geq \rho \Gamma$ cannot be fulfilled with $\rho>0$: it would imply via Lemma \ref{LemInegalitePoincare} a log-Sobolev inequality 
\begin{eqnarray*}
\text{Ent}_\mu f & \leq & \frac1{2\rho} \int \frac{\Gamma f}f \dd \mu,
\end{eqnarray*}
which is impossible since the right hand side vanishes for some non-constant $f$, for which the entropy is positive.

As Arnold and Erb proved in \cite{AntonErb}, and as we will see in a revisited version in Section~\ref{SectionGeneral}, even if $D$ is not definite positive, if all the eigenvalues of $B$ lies in $\{\lambda \in \mathbb C, \Re(\lambda)> \rho\}$ for some $\rho>0$ then the process is ergodic and there exists an explicit constant $c$ such that
\begin{eqnarray*}
\frac{|\na P_t f|^2}{P_t f} & \leq & ce^{-\rho t} P_t\po \frac{|\na f|^2}{f}\pf.
\end{eqnarray*}
For $\Phi_1(f) = f\ln f$ it is classical that $\Gamma_{\Phi_1} = \frac{(\na f)^T D \na f}{2f}$ and thus Lemma \ref{LemInegalitePoincare} yields the log-Sobolev inequality
\[\text{Ent}_\mu f  \leq \frac{c|D|}{2\rho} \int \frac{|\na f|^2}{f} \dd \mu. \]
Of course, in this example the invariant law $\mu$ is Gaussian and explicit (\cite[Theorem 3.1]{AntonErb}), and all Gaussian laws satisfy such an inequality. But the arguments extend to the case 
\begin{eqnarray}
Lf(x) & = & - b(x). \na f(x) + \text{div} \po D \na f\pf(x)
\end{eqnarray}
if we simply assume the Jacobian matrix of $b$ is such that $J_b \geq B$ for some $B\in\mathcal M_{d\times d}(\R)$. 
 See Section~\ref{SectionGraph} for an application.

\bigskip

For another example of the link between functional inequalities and $\Gamma$ calculus, see the proof of the H\"older Inequality in the Preface of \cite{BakryGentilLedoux}. Or, actually, see the whole book.

\subsubsection{Hypocoercivity}

Some functional inequality such as those proved in Lemma \ref{LemInegalitePoincare} may also be known a priori, which leads to a third way to use $\Gamma$ calculus:
\begin{lem}\label{LemAvecPoincareApriori}
Suppose $P_t$ is ergodic and its invariant measure satisfies
\begin{eqnarray*}
\forall f\in\A_+\hspace{25pt} 0\hspace{7pt} \leq\hspace{7pt} \int \Phi_1(f) \dd \mu - \Phi_1 \po \int f \dd \mu \pf & \leq & c \int \Phi_2(f) \dd \mu.
\end{eqnarray*}
If moreover $\Gamma_{\Phi_2} \geq \rho \Phi_2 - \beta \Gamma_{\Phi_1}$ for some $\beta>0$ then, writing
\[W(t) = \beta\po \int \Phi_1(P_t) \dd \mu - \Phi_1 \po \int f \dd \mu \pf\pf + \int \Phi_2(P_t f) \dd \mu,\]
we have $W(t) \leq e^{-\frac{2\rho t}{1+\beta c}}W(0)$ and in particular
\begin{eqnarray*}
\int \Phi_2(P_t f)\dd \mu & \leq & (1+\beta c) e^{-\frac{2\rho t}{1+\beta c}} \int \Phi_2(f) \dd \mu.
\end{eqnarray*}
\end{lem}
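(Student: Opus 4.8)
The plan is to follow the interpolation recipe that pervades this section: differentiate a well-chosen functional along the semigroup, use the hypothesis $\Gamma_{\Phi_2} \geq \rho \Phi_2 - \beta \Gamma_{\Phi_1}$ to control the derivative, and close the loop with the a priori inequality $\int \Phi_1(f)\,\dd\mu - \Phi_1(\int f\,\dd\mu) \leq c \int \Phi_2(f)\,\dd\mu$. Concretely, write $E_1(t) = \int \Phi_1(P_t f)\,\dd\mu - \Phi_1(\int f\,\dd\mu)$ (note $\int P_t f\,\dd\mu = \int f\,\dd\mu$ by invariance) and $E_2(t) = \int \Phi_2(P_t f)\,\dd\mu$, so that $W(t) = \beta E_1(t) + E_2(t)$.

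First I would compute $E_1'(t)$ and $E_2'(t)$. From the identity $\frac{\dd}{\dd t}\Phi(P_t f) = \dd\Phi(P_t f).LP_tf = L\Phi(P_tf) - 2\Gamma_\Phi(P_tf)$ (rearranging the definition of $\Gamma_\Phi$), and since $\mu$ is invariant so $\int L(\cdot)\,\dd\mu = 0$, we get $E_i'(t) = -2\int \Gamma_{\Phi_i}(P_tf)\,\dd\mu$ for $i=1,2$. Hence
\begin{eqnarray*}
W'(t) & = & -2\beta \int \Gamma_{\Phi_1}(P_tf)\,\dd\mu - 2\int \Gamma_{\Phi_2}(P_tf)\,\dd\mu.
\end{eqnarray*}
Now apply the hypothesis $\Gamma_{\Phi_2} \geq \rho\Phi_2 - \beta\Gamma_{\Phi_1}$ to the second integral (after integration against $\mu$): $-2\int\Gamma_{\Phi_2}(P_tf)\,\dd\mu \leq -2\rho\int\Phi_2(P_tf)\,\dd\mu + 2\beta\int\Gamma_{\Phi_1}(P_tf)\,\dd\mu$. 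The two $\pm 2\beta\int\Gamma_{\Phi_1}(P_tf)\,\dd\mu$ terms cancel, leaving $W'(t) \leq -2\rho E_2(t)$.

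The final step is to bound $E_2(t)$ below in terms of $W(t)$. By the a priori inequality applied to $P_tf$, we have $E_1(t) \leq c E_2(t)$, hence $W(t) = \beta E_1(t) + E_2(t) \leq (\beta c + 1) E_2(t)$, i.e.\ $E_2(t) \geq W(t)/(1+\beta c)$. (Here one uses $E_1(t) \geq 0$, from the left inequality in the hypothesis, to ensure $W(t) \geq 0$ so the division preserves the sign.) Therefore $W'(t) \leq -\frac{2\rho}{1+\beta c} W(t)$, and Gronwall's lemma gives $W(t) \leq e^{-\frac{2\rho t}{1+\beta c}} W(0)$. Finally, using $E_2(t) \leq W(t)$ (again because $\beta E_1(t) \geq 0$) on the left and $W(0) = \beta E_1(0) + E_2(0) \leq (\beta c + 1)\int\Phi_2(f)\,\dd\mu$ on the right yields the stated $\int\Phi_2(P_tf)\,\dd\mu \leq (1+\beta c)e^{-\frac{2\rho t}{1+\beta c}}\int\Phi_2(f)\,\dd\mu$.

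The only genuinely delicate point is justifying the differentiation under the integral sign and the identity $E_i'(t) = -2\int\Gamma_{\Phi_i}(P_tf)\,\dd\mu$, together with the fact that $\Gamma_{\Phi_2} \geq \rho\Phi_2 - \beta\Gamma_{\Phi_1}$ as a pointwise inequality of functions in $\A$ integrates against $\mu$; but these are exactly the kind of regularity issues the paper has agreed to sweep into the strong assumptions on $\A$, so in this framework the proof is essentially the three-line computation above. One should also double-check the sign bookkeeping on the cancellation of the $\Gamma_{\Phi_1}$ terms, which is what forces the precise coefficient $\beta$ in the definition of $W$.
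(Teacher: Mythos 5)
Your proof is correct and follows essentially the same route as the paper: differentiate $W$ using invariance of $\mu$ and the definition of $\Gamma_\Phi$, apply the curvature-type hypothesis to get $W'(t)\leq -2\rho\int\Phi_2(P_tf)\,\dd\mu$, then bound $\int\Phi_2(P_tf)\,\dd\mu \geq W(t)/(1+\beta c)$ via the a priori inequality and close with Gronwall. Your write-up is slightly more explicit than the paper's (in particular you spell out the sign considerations ensuring $W\geq 0$), but the argument is the same.
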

\begin{proof}
Since $\mu$ is invariant, $\int L\Phi_i(f) \dd \mu = 0$ for all $f$ and $i=1,2$, and
\begin{eqnarray*}
W'(t) & = & -2\int \po\Gamma_{\Phi_2} + \beta \Gamma_{\Phi_1}\pf(P_t f) \dd \mu  \\
& \leq & -2\rho \int \Phi_2(P_t f) \dd \mu\\
& \leq & -\frac{2 \rho}{1+\beta c}W(t).
\end{eqnarray*}
By the Gronwall's Lemma,
\begin{eqnarray*}
 \int \Phi_2(P_t f) \dd \mu & \leq & W(t) \\
& \leq  & W(0) e^{-\frac{\rho t}{1+\beta c}}\\
& \leq & (1+\beta c) e^{-\frac{\rho t}{1+\beta c}}  \int \Phi_2(f) \dd \mu.
\end{eqnarray*}
\end{proof}

\textbf{Example 3:} Consider the kinetic Fokker-Planck diffusion on $\R^{2d}$ with generator
\begin{eqnarray}\label{EqGenekFP}
Lf(x,y) & = & \co y.\na_x - \po y  + \frac{1}{\varepsilon}\na_x U\pf.\na_y + \Delta_y\cf f(x,y),
\end{eqnarray}
where $\varepsilon>0$ and $U\in\mathcal C^\infty(\R^d)$ is quadratic at infinity (cf. \cite{MonmarcheRecuitHypo} for precise assumptions on $U$ and proofs of the upcoming assertions ; see also Lemma \ref{LemGammaEntropic} below). The carr\'e du champs is $\Gamma(f) = |\na_y f|^2$.

The invariant law is $\mu = e^{-\frac{U(x)}{\varepsilon} - \frac{|y|^2}{2}} \dd x \dd y$, the Gibbs law associated to the Hamiltonian $H(x,y) = \frac{U(x)}{\varepsilon} + \frac{|y|^2}{2}$. It satisfies a log-Sobolev inequality
\begin{eqnarray*}
\text{Ent}_\mu(f) & \leq & c_\varepsilon \int \frac{|\na f|^2}{f} \dd \mu,
\end{eqnarray*}
with $c_\varepsilon$ such that $\underset{\varepsilon\rightarrow0}\lim\ \varepsilon \ln c_\varepsilon = d_*>0$ is the critical depth of the potential $U$. Let
\[M=\begin{pmatrix}
1 & -1 \\ -1 & 2
\end{pmatrix},\hspace{35pt} \Phi_2(f) = \frac{(\na f)^T M \na f}{f} = \frac{|(\na_x-\na_y)f|^2 + |\na_y f|^2}{f}.\]
Under the assumption that the Hessian $\na_x^2 U$ is bounded, there exist $\rho,\beta>0$ such that
\begin{eqnarray*}
\Gamma_{\Phi_2}(f) & \geq & \rho \Phi_2(f) - \beta \frac{|\na_y f|^2}{f}.
\end{eqnarray*}
Applying Lemma \ref{LemAvecPoincareApriori} with $\Phi_1(f) = \text{Ent}_\mu(f)$, we obtain
\begin{eqnarray*}
\text{Ent}_\mu(P_t f) &\leq & e^{-\lambda_\varepsilon t} \po \frac3\beta\int \frac{|\na f|^2}{f} \dd \mu + \text{Ent}_\mu(f) \dd \mu\pf
\end{eqnarray*}
with $\lambda_\varepsilon = \frac{2\rho }{1+\beta c_\varepsilon}$. Note that $\beta$ and $\rho$ depends on $\varepsilon$, but not too much, in the sense we still have $\underset{\varepsilon\rightarrow0}\lim\  \varepsilon \ln \lambda_\varepsilon = - d_*$ (see \cite{MonmarcheRecuitHypo} for the consequences on the simulated annealing algorithm based on the kinetic Fokker-Planck dynamics).

\bigskip

This argument is due to Villani \cite{Villani2009} (his idea to work with twisted gradients of the form $(\na f)^T M \na f$ having been inspired by the similar twisted metric of Talay \cite{Talay}, used in a Meyn-Tweedie approach rather than an entropy one) and was initially applied to the kinetic Fokker-Planck process. This case has already been written in "$\Gamma$ settings" (meaning, considering pointwise inequalities and quantities of the form $P_s \Phi(P_{t-s} f)$ rather than $\int \Phi(P_t f) \dd \mu$) by Baudoin in \cite{Baudoin}. Later, Dobeault, Mouhot and Schmeiser proposed in \cite{DMS2009} a similar strategy in $L^2$ with no space derivatives involved, which have from then proved efficient in many contexts.

These methods yield convergences of the form $V(t) \leq c e^{-\rho t} V(0)$ with $c>1$, which are called hypocoercive: the main differences with the case $c=1$ is that hypocoercivity is stable by equivalence (in the sense if $c_1 V(t) \leq W(t) \leq c_2 V(t)$ with $c_1,c_2>0$, then $W$ is also hypocoercive), and is not equivalent to the functional inequality \emph{"$V'(0) \leq - \rho V(0)$ for all $f\in\A$"} (such as the Poincar\'e inequality is equivalent to the exponential decay of the variance, and the log-Sobolev inequality is equivalent to the exponential decay of the entropy; see~\cite{BolleyGentil}).

\bigskip

For another application of Lemma \ref{LemAvecPoincareApriori}, we refer to the study in \cite{MonmarchePDMP} of the TCP with generator
\[Lf(x) = f'(x) + \lambda(x) \po f(hx) - f(x)\pf,\]
where $h\in[0,1)$ and $\lambda$ is a positive increasing function. Note this process is not a diffusion, and the invariant law $\mu$ is not explicit. Nevertheless it is possible to prove $\mu$ satisfies a log-Sobolev inequality, and the $\mathcal H^1$ norm $\int\po P_t f - \int f\dd\mu \pf^2 + |\na P_t f|^2 \dd \mu$ decays exponentially fast with an explicit rate. 

\subsubsection{Global hypoellipticity}

Lemmas \ref{LemGammaCurvature}, \ref{LemInegalitePoincare} and \ref{LemAvecPoincareApriori} are mostly related to the long-time behaviour of the semi-group, but $\Gamma$ calculus may also reveal short-time regularization properties:
\begin{lem}\label{LemRegularisation}
For some $t_0\in(0,\infty]$, let $\Phi_t(f)$ be defined for $t\in[0,t_0]$, $f\in\A_+$ and suppose that $\partial_t \Phi_t(f)$ exists and is in $\A$ for all $f\in \A_+$.
\begin{enumerate}[a)]
\item It $2\Gamma_{\Phi_t} f \geq \partial_t \Phi_t f$ for all $t\in[0,t_0],f\in\A_+$, then
\begin{eqnarray}
\Phi_t(P_t f) & \leq & P_t (\Phi_0 f).
\end{eqnarray}
\item If $P_t$ admits an invariant law $\mu$ such that $\A \subset L^2(\mu)$ and  $2\int \Gamma_{\Phi_t} f \dd \mu \geq \int \partial_t \Phi_t f \dd \mu$ for all $t\in[0,t_0],f\in\A_+$, then
\begin{eqnarray}
\int \Phi_t(P_t f) \dd \mu & \leq &  \int \Phi_0 f \dd \mu.
\end{eqnarray}
\end{enumerate}
\end{lem}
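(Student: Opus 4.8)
The plan is to transpose the interpolation arguments behind Lemmas~\ref{LemGammaCurvature}--\ref{LemAvecPoincareApriori} to a curve that moves \emph{both} the semigroup time and the functional index at once. For part a), fix $t\in[0,t_0]$, $f\in\A_+$ and $x\in E$, and set
\[\psi(s) = P_s\big(\Phi_{t-s}(P_{t-s}f)\big)(x),\qquad s\in[0,t].\]
The endpoints are $\psi(0)=\Phi_t(P_t f)(x)$ and $\psi(t)=P_t(\Phi_0 f)(x)$, so it suffices to check $\psi'\geq 0$ on $[0,t]$, which yields $\psi(0)\leq\psi(t)$, i.e. a); for b) one merely replaces the outer $P_s$ by integration against $\mu$.

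To differentiate $\psi$, write $g_s=P_{t-s}f$, which lies in $\A_+$ since $\A$ is stable by the (positivity-preserving) semigroup, and satisfies $\frac{\dd}{\dd s}g_s=-Lg_s$. Using $\frac{\dd}{\dd s}P_s h=P_s Lh$, the chain rule for $\Phi$, and writing $\partial_t\Phi_{t-s}(g_s)$ for the $t$-derivative of $\tau\mapsto\Phi_\tau(g_s)$ evaluated at $\tau=t-s$,
\[\frac{\dd}{\dd s}\Phi_{t-s}(g_s) = -\,\partial_t\Phi_{t-s}(g_s) + \text d\Phi_{t-s}(g_s).\Big(\tfrac{\dd}{\dd s}g_s\Big) = -\,\partial_t\Phi_{t-s}(g_s) - \text d\Phi_{t-s}(g_s).Lg_s.\]
By the very definition of $\Gamma_\Phi$ one has $\text d\Phi_{t-s}(g_s).Lg_s = L\Phi_{t-s}(g_s)-2\,\Gamma_{\Phi_{t-s}}(g_s)$; substituting back into $\psi'$, the two copies of $L\Phi_{t-s}(g_s)$ cancel and
\[\psi'(s) = P_s\big(2\,\Gamma_{\Phi_{t-s}}(g_s) - \partial_t\Phi_{t-s}(g_s)\big) \geq 0,\]
the inequality being the hypothesis of a) applied with index $t-s\in[0,t_0]$ and test function $g_s\in\A_+$, together with positivity of $P_s$. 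Hence $\psi$ is nondecreasing, proving a).

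For part b), set $\Psi(s)=\int\Phi_{t-s}(P_{t-s}f)\,\dd\mu$, so that $\Psi(0)=\int\Phi_t(P_t f)\,\dd\mu$ and $\Psi(t)=\int\Phi_0 f\,\dd\mu$. The same chain rule gives
\[\Psi'(s) = \int\big(2\,\Gamma_{\Phi_{t-s}}(g_s) - \partial_t\Phi_{t-s}(g_s) - L\Phi_{t-s}(g_s)\big)\,\dd\mu,\]
and since $\mu$ is invariant $\int L\Phi_{t-s}(g_s)\,\dd\mu=0$, exactly as in the proof of Lemma~\ref{LemAvecPoincareApriori} (this is where $\A\subset L^2(\mu)$ is used). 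The remaining integrand is $\geq0$ by the integrated hypothesis of b), so $\Psi'\geq0$ and $\Psi(0)\leq\Psi(t)$.

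The argument is essentially routine once the interpolating curve $s\mapsto\Phi_{t-s}(P_{t-s}f)$ is guessed; the only points demanding care are keeping the two sources of $s$-dependence apart, tracking the minus sign in $\frac{\dd}{\dd s}P_{t-s}f=-LP_{t-s}f$, and — relegated to the standing assumptions on $\A$ — checking that every quantity involved is differentiable in $s$, belongs to $\A$, and in b) is $\mu$-integrable with $\int L(\cdot)\,\dd\mu=0$. I do not anticipate any obstacle beyond this bookkeeping.
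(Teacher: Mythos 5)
Your proof is correct and follows essentially the same route as the paper: you introduce the interpolating function $\psi(s)=P_s\Phi_{t-s}(P_{t-s}f)$, compute $\psi'(s)=P_s\big(2\Gamma_{\Phi_{t-s}}-\partial_t\Phi_{t-s}\big)(P_{t-s}f)$ by the chain rule, and conclude $\psi'\geq0$ pointwise for a) and after integration against the invariant $\mu$ for b). You simply spell out the chain-rule computation that the paper leaves implicit.
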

\begin{proof}
Let $\psi(s) = P_s \Phi_{t-s}(P_{t-s} f)$, so that $\psi'(s) = P_s \po 2\Gamma_{\Phi_{t-s}} - \partial_t \Phi_{t-s}\pf(P_{t-s} f)$. In the first case $\psi'(s) \geq 0$, and in the second one $\int \psi'(s) \dd\mu \geq 0$, 
 which conclude in both cases.
\end{proof}

\textbf{Example 4:} this idea was introduced by H\'erau in \cite[Section 3]{Herau2007} (see also \cite[A.21 p.145]{Villani2009}) to study the kinetic Fokker-Planck with generator \eqref{EqGenekFP} via
\begin{eqnarray*}
\Phi_t(f) & =& f^2 +  t \po a|\na_y f|^2 + b t|\na_y f + ct \na_x f|^2\pf
\end{eqnarray*}
where $a$, $b$, $c\in\R$. Under the assumption that the Hessian $\na^2 U$ is bounded (see also \cite[Theorem A.8, A.15 and Remark p.152]{Villani2009} for a proof under the weaker assumption $|\na^2 U | \leq C(1+|\na U|)$) a careful choice of $a$, $b$ and $c$ leads to $\partial_t \int \Phi_t(P_t f) \dd \mu \leq 0$. In particular since $ \int (P_t f)^2 \dd \mu \leq \int f^2 \dd \mu$, denoting by $\|.\|$ the $L^2(\mu)$ norm,
\begin{eqnarray*}
\|\na P_t f\|^2  & \leq & C\frac{\| f -\mu f\|^2 }{t^3}
\end{eqnarray*}
for $t\leq t_0$ for some $C,t_0>0$.

\bigskip

Lemma \ref{LemRegularisation} is reminiscent of the so-called reversed local entropy inequalities (see \cite[Theorem 2]{BolleyGentil}). In fact, suppose $\Phi$ satisfies the generalized $\Gamma_2$ condition $\Gamma_{\Gamma_\Phi} f \geq \rho \Gamma_\Phi$, $\rho \in \R$. Let $c(t)= \frac{e^{2\rho t}-1}{\rho} $ if $\rho \neq 0$ and $c(t) = 2t$ if $\rho = 0$ be the solution of $c'(t) = 2\po 1 + \rho c(t) \pf$ with $c(0)=0$, and
\begin{eqnarray*}
\Phi_t & =& \Phi + c(t)\Gamma_{\Phi}.
\end{eqnarray*}
Then $2\Gamma_{\Phi_t } \geq \partial_t \Phi_t$, hence for all $t\geq 0$
\begin{eqnarray}
P_t \Phi(f) - \Phi(P_t f) & \geq & c(t) \Gamma_{\Phi} (P_t f).  
\end{eqnarray}
\subsection{Quadratic $\Phi$'s, gradient bounds and couplings}

Consider the case of $\Phi(f) = C_1f.C_2f$ when $C_i = (c_{i,1},\dots,c_{i,r})$ is a linear operator from $\A$ to $\A^r$ for some $r\geq 1$. These $\Phi$'s play a particular role as they behave well with any Markov generator and not only with diffusion ones.  The test functions $f$ do not need to be positive.


By convention we write $\Gamma\po C_1 f, C_2 f\pf = \sum_{i=1}^r \Gamma\po c_{1,i}f,c_{2,i} f\pf$ and $[L,C_i] = \po [L,c_{i,1}],\dots,[L,c_{i,r}]\pf$, where $[L,c_{i,j}] = Lc_{i,j} - c_{i,j}L$ is the commutator of $L$ and $c_{i,j}$.

\begin{lem}\label{LemGammaQuadratic}
If $\Phi(f) = C_1f.C_2f$ then
\begin{eqnarray*}
\Gamma_{\Phi}(f) & = & \Gamma(C_1 f, C_2 f) + \frac12 C_1f.[L,C_2]f + \frac{1}{2}[L,C_1]f.C_2f
\end{eqnarray*}
\end{lem}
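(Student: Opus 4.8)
The plan is to compute $\Gamma_\Phi(f)$ directly from the definition, using the fact that for a quadratic functional the differential operator $\mathrm{d}\Phi$ is computable by polarization. Recall $\Gamma_{L,\Phi}(f) = \frac12\left( L\Phi(f) - \mathrm{d}\Phi(f).Lf\right)$ with $\Phi(f) = C_1 f . C_2 f = \sum_{i=1}^r (c_{1,i}f)(c_{2,i}f)$. First I would establish that $\mathrm{d}\Phi(f).g = C_1 g.C_2 f + C_1 f.C_2 g = \sum_i (c_{1,i}g)(c_{2,i}f) + (c_{1,i}f)(c_{2,i}g)$, which follows immediately by expanding $\Phi(f+sg)$ and keeping the linear term in $s$ (each $c_{i,j}$ is linear). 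Taking $g = Lf$ gives $\mathrm{d}\Phi(f).Lf = \sum_i (c_{1,i}Lf)(c_{2,i}f) + (c_{1,i}f)(c_{2,i}Lf)$.

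Next I would handle the term $L\Phi(f) = \sum_i L\big((c_{1,i}f)(c_{2,i}f)\big)$. For each $i$, apply the polarized carré du champ identity $L(uv) = 2\Gamma(u,v) + uLv + vLu$ with $u = c_{1,i}f$ and $v = c_{2,i}f$; this is just the definition of $\Gamma$ rearranged. Summing over $i$ yields
\begin{eqnarray*}
L\Phi(f) & = & 2\,\Gamma(C_1f,C_2f) + \sum_i \big( (c_{1,i}f)\,L(c_{2,i}f) + (c_{2,i}f)\,L(c_{1,i}f)\big).
\end{eqnarray*}

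Then I would subtract the two expressions and reorganize. We get
\begin{eqnarray*}
2\Gamma_\Phi(f) & = & 2\,\Gamma(C_1f,C_2f) + \sum_i (c_{1,i}f)\big(L c_{2,i}f - c_{2,i}Lf\big) + \sum_i (c_{2,i}f)\big(L c_{1,i}f - c_{1,i}Lf\big).
\end{eqnarray*}
Now recognize $L c_{j,i}f - c_{j,i}Lf = [L,c_{j,i}]f$, so the two sums are exactly $C_1f.[L,C_2]f$ and $[L,C_1]f.C_2f$ in the notation introduced just before the lemma. Dividing by $2$ gives the claimed formula. The argument is entirely a bookkeeping computation, so there is no real obstacle; the only point requiring a little care is the justification that $L$ commutes with finite sums and that $\mathrm{d}\Phi$ is indeed given by the naive polarization formula, both of which are guaranteed by the standing assumptions on $\A$ (that it is an algebra stable under $L$ and that $\Phi$ is differentiable for the pointwise topology). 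I would state these verifications in one sentence rather than belabor them.
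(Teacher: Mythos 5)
Your proof is correct and follows the same approach as the paper's own (one-line) proof, which simply notes $\dd\Phi(f).g = C_1 f.C_2 g + C_1g . C_2 f$ and calls the rest a straightforward consequence of the definitions. You have merely filled in the bookkeeping (polarizing $L(uv)$ via the carr\'e du champ identity and recognizing the commutators), which is exactly what the paper leaves implicit.
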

\begin{proof}
This is a straightforward consequence of the definitions, since $\dd\Phi(f)g = C_1 f.C_2 g + C_1g . C_2 f$.
\end{proof}
In particular, if $C_1 = C_2 = C$, since $\Gamma$ is a positive quadratic operator, we always have
\begin{eqnarray}\label{EqGammaQuadratic}
\Gamma_{|C .|^2} f & \geq & Cf.[L,Cf].
\end{eqnarray}

\textbf{Example 5:} Let $L f(x) = b(x).\na f(x)$ be a deterministic transport operator on $\R^d$. Then $\Gamma = 0$ and
\begin{eqnarray*}
\Gamma_{|\na .|^2} f & = & (\na f)^T [b.\na,\na] f\\
& = & -(\na f)^T J_b \na f.
\end{eqnarray*}
If $J_b(x) \leq -\rho$ for all $x\in\R^d$ with $\rho\in\R$, from Lemma~\ref{LemGammaCurvature},
\[|\na P_t f|^2 \leq e^{-2\rho t} P_t |\na f|^2.\] 
On the other hand if $J_b(x) \geq \rho$ for all $x\in\R^d$ we may invert the signs in the proof of Lemma~\ref{LemGammaCurvature} to get
\[|\na P_t f|^2 \geq e^{2\rho t} P_t |\na f|^2.\] 
Here of course $P_t f(x) = f\po \varphi_x(t)\pf$ where $\varphi$ is the flow solution of
\[\varphi_x(0) = x \hspace{30pt} \varphi'_x(t) = b\po\varphi_x(t)\pf.\]
The Jacobian matrix measures how two trajectories of the deterministic flow tends to get closer or to drift apart. Indeed,
\begin{eqnarray*}
\varphi_x(t) - \varphi_y(t) &  =&  x- y + t J_b(x)(x-y) + t\underset{y\rightarrow x}o(x-y) + \underset{t\rightarrow 0}o(t) \\
\Rightarrow\hspace{15pt}|\varphi_x(t) - \varphi_y(t)|^2 & = & |x-y|^2 + 2 t(x-y)^*J_b(x)(x-y)+ t\underset{y\rightarrow x}o\po|x-y|^2\pf + |x-y| \underset{t\rightarrow 0}o(t).
\end{eqnarray*}
A negative Jacobian means the flow locally contracts the space in the vicinity of all points.

\bigskip

In the classical Bakry-\'Emery theory, $\Phi f = \Gamma f$ is a natural choice in particular because it only depends on the Markov dynamics (it is, in a sense, intrinsic). On the other hand, the choice $\Phi f = |\na_d f|^2$, where $\na_d$ is the (upper-)gradient associated to the metric $d$ on $E$ by
\begin{eqnarray}\label{EqdefGradWasser}
|\na_d f|(x) & := & \underset{r\rightarrow0}\lim \underset{0<d'x,y)\leq r} \sup \frac{|f(x)-f(y)|}{d(x,y)},
\end{eqnarray}
 is linked to the Wasserstein distances on $\mathcal P(E)$, as shown by Kuwada \cite{Kuwada,Kuwada2013} (see also \cite{CattiauxGuillin2014}). Recall the Wasserstein distance $\mathcal W_{p}$ on $\mathcal P(E)$ is defined by
\begin{eqnarray*}
\mathcal W_p(\mu,\nu) & = & \inf \{\po\mathbb E\po d^p(U,V)\pf\pf^{\frac1p},\ \text{law}(U)= \mu,\text{law}(V)=\nu\}.
\end{eqnarray*}
When the marginal laws of $(U,V)$ are $\mu$ and $\nu$, $(U,V)$ is called a coupling of $\mu$ and $\nu$. For $\mu \in \mathcal P(E)$ we write $\mu Q$ the image of $\mu$ by a Markov operator $Q$, so that $(\mu Q) f = \mu (Qf)$.

 We state the following result only in $\R^d$ endowed with the Euclidian metric and refer to \cite{Kuwada,Kuwada2013} for more general settings:
\begin{thm}[Theorem 2.2 of \cite{Kuwada}]\label{TheorKuwada} 
Let $P$ be a Markov semi-group on $\R^d$ and let $p>1$ and $q = \frac{p}{p-1}$ be its H\"older conjugate. Then
\begin{eqnarray}\label{EqContractWasserstein}
 \forall \mu,\nu \in \mathcal P(\R^d),\hspace{25pt}\mathcal W_p\po \mu P,\nu P\pf & \leq & \gamma \mathcal W_p\po \mu ,\nu \pf
\end{eqnarray}
for some $\gamma>0$ implies
\begin{eqnarray}\label{EqContractGradient}
\forall f\in\mathcal C_{b,L}, x\in\R^d,\hspace{25pt} |\na P f|(x) & \leq & \gamma \po P\po |\na f|^q \pf\pf^{\frac1q}(x)
\end{eqnarray}
where $\mathcal C_{b,L}$ is the set of bounded continuous Lipschitz functions on $\R^d$.

Moreover if for all $x\in\R^d$, $P(x)$ admits a continuous density with respect to the Lebesgue measure, then the converse is true: \eqref{EqContractGradient} implies \eqref{EqContractWasserstein}.
\end{thm}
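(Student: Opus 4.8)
The argument follows Kuwada's duality; the plan is to treat the two implications separately, both resting on the \emph{Hopf--Lax semigroup} of the cost $c(x,y)=\frac1p|x-y|^p$ and on Kantorovich duality. For $s>0$ write $\mathcal Q_s g(x)=\inf_y\big(g(y)+\frac{|x-y|^p}{p\,s^{p-1}}\big)$; the ingredients I will use are: (a) $0\le g-\mathcal Q_s g\le\frac sq(\mathrm{Lip}\,g)^q$ pointwise, and $\frac1s\big(g-\mathcal Q_s g\big)(x)\to\frac1q|\nabla^-g|^q(x)$ as $s\to0$, where $|\nabla^-g|$ is the descending slope of $g$, which is $\le|\nabla g|$ in the sense of \eqref{EqdefGradWasser}; (b) the trivial half of duality, $\int\mathcal Q_sg\,\dd m_0-\int g\,\dd m_1\le\frac1{p\,s^{p-1}}\mathcal W_p(m_0,m_1)^p$ for all $m_0,m_1\in\mathcal P(\R^d)$ and bounded Lipschitz $g$, obtained by integrating $\mathcal Q_sg(x)-g(y)\le c(x,y)/s^{p-1}$ against a coupling; (c) the Hamilton--Jacobi equation $\partial_s\mathcal Q_sg=-\frac1q|\nabla\mathcal Q_sg|^q$ (valid Lebesgue-a.e.) and its comparison principle.

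For \eqref{EqContractWasserstein}$\Rightarrow$\eqref{EqContractGradient} I would fix $f\in\mathcal C_{b,L}$ and $x_0\in\R^d$ and, for $y$ close to $x_0$ and $s>0$, write $Pf(x_0)-Pf(y)=P(f-\mathcal Q_sf)(x_0)+\big(\int\mathcal Q_sf\,\dd(\delta_{x_0}P)-\int f\,\dd(\delta_yP)\big)$. Bounding the bracket by (b) with $m_0=\delta_{x_0}P$, $m_1=\delta_yP$ and the hypothesis $\mathcal W_p(\delta_{x_0}P,\delta_yP)\le\gamma|x_0-y|$, and dealing with the opposite sign by replacing $f$ by $-f$, I get $|Pf(x_0)-Pf(y)|\le\big[P(f-\mathcal Q_sf)(x_0)\vee P((-f)-\mathcal Q_s(-f))(x_0)\big]+\frac{\gamma^p}{p\,s^{p-1}}|x_0-y|^p$. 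The whole point of this arrangement is that the quantity not controlled uniformly in $s$ now sits at the \emph{fixed} point $x_0$. Setting $s=\varepsilon|x_0-y|$, dividing by $|x_0-y|$ and letting $y\to x_0$, the transport term contributes $\frac{\gamma^p}{p\varepsilon^{p-1}}$, while reverse Fatou (the integrand being dominated by the constant $\frac1q(\mathrm{Lip}\,f)^q$) and (a) give $\limsup_{s\to0}\frac1s\big[P(f-\mathcal Q_sf)(x_0)\vee P((-f)-\mathcal Q_s(-f))(x_0)\big]\le\frac1qP(|\nabla f|^q)(x_0)$. Hence $|\nabla Pf|(x_0)\le\frac\varepsilon qP(|\nabla f|^q)(x_0)+\frac{\gamma^p}{p\varepsilon^{p-1}}$ for every $\varepsilon>0$, and optimizing in $\varepsilon$ via Young's inequality (using $\frac1p+\frac1q=1$) gives exactly $|\nabla Pf|(x_0)\le\gamma\big(P(|\nabla f|^q)(x_0)\big)^{1/q}$.

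For the converse I assume \eqref{EqContractGradient} and that every $P(x,\cdot)$ has a continuous Lebesgue density. Fixing bounded Lipschitz $g$, set $v(s,x)=P(\mathcal Q_sg)(x)$, so $v(0,\cdot)=Pg$ is bounded and Lipschitz since $|\nabla Pg|\le\gamma\,\mathrm{Lip}\,g$. Differentiating under $P$ (legitimate because $\partial_s\mathcal Q_sg$ is bounded) and using that, \emph{by the density assumption}, the Lebesgue-a.e. identity (c) can be integrated against $\delta_xP$, one obtains $\partial_sv(s,x)=-\frac1qP(|\nabla\mathcal Q_sg|^q)(x)\le-\frac{\gamma^{-q}}q|\nabla v(s,\cdot)|^q(x)$ by \eqref{EqContractGradient}; thus $v$ is a Lipschitz subsolution of $\partial_sw=-\frac{\gamma^{-q}}q|\nabla w|^q$, and the comparison principle yields $v(1,\cdot)\le\widehat{\mathcal Q}_1(Pg)$ with $\widehat{\mathcal Q}_1h(x)=\inf_y\big(h(y)+\frac{\gamma^p}p|x-y|^p\big)$ (the exponent uses $q(p-1)=p$). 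Kantorovich duality then finishes: $\frac1p\mathcal W_p(\mu P,\nu P)^p=\sup_g\big(\int P(\mathcal Q_1g)\,\dd\nu-\int Pg\,\dd\mu\big)\le\sup_h\big(\int\widehat{\mathcal Q}_1h\,\dd\nu-\int h\,\dd\mu\big)=\frac{\gamma^p}p\mathcal W_p(\mu,\nu)^p$, which is \eqref{EqContractWasserstein}.

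I expect the first implication to be the main obstacle: one must make the limits $y\to x_0$ and $s\to0$ act simultaneously with no continuity of $g\mapsto Pg$ at hand, and the device that makes this work is precisely to organize the splitting so that every term controlled only \emph{in the limit} $s\to0$ is evaluated at the fixed base point, reducing the matter to a one-parameter reverse-Fatou estimate followed by a Young optimization. In the converse the sensitive step is instead purely measure-theoretic — guaranteeing that the Lebesgue-a.e. Hamilton--Jacobi identity survives composition with the kernel $P$ — which is exactly what the continuous-density hypothesis provides.
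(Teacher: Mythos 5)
The paper does not prove this theorem: it is stated as a black box cited from Kuwada's article \cite{Kuwada}, so there is no internal proof to compare against. Your argument is a correct reproduction of Kuwada's duality method (Hopf--Lax inf-convolution plus Kantorovich duality): the forward direction via the splitting at the fixed base point, the Young optimization in $\varepsilon$, and the converse via the subsolution/comparison argument for the Hamilton--Jacobi flow all check out, with the continuous-density hypothesis entering exactly where you say (to push the a.e.\ Hamilton--Jacobi identity through the kernel). The one place that would deserve a few more words if this were to be written out in full is the chain-rule step $v(1,x)-v(0,y)=\int_0^1(\partial_sv+\dot\xi\cdot\nabla v)(s,\xi(s))\,\dd s$ hidden in the comparison principle: $v$ is only Lipschitz, so the line $s\mapsto(s,\xi(s))$ need not hit the Rademacher differentiability set, and one must either average over small shifts of the segment, exploit the semi-concavity of $\mathcal Q_sg$ in $x$, or invoke the viscosity comparison theorem directly. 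This is standard but not automatic; everything else is sound.
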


\textbf{Example 6:} Consider the jump operator $Lf(x) = \lambda \po Qf(x) - f(x) \pf$ on $\R^d$ where $Qf(x)$ is a Markov operator and $\lambda >0$ is constant. If $(Y_k)_{k\geq 0}$ is a Markov chain with operator $Q$, in the sense $\mathbb E\po f(Y_{k+1}) | Y_k = y\pf = Q f(y)$, and $(N_t)_{t\geq 0}$ is a Poisson process with intensity $\lambda$, then $X_t = Y_{N_t}$ is a Markov process with generator $L$. It is possible to define two processes $X$ and $X'$ with generator $L$ starting at different points $x$ and $x'$ that will always jump together, by considering the same Poisson process, so that the way both processes may get closer or drift apart is given by the behaviour of the associated Markov chains $Y$ and $Y'$. Suppose $Q$ is such that one can define a coupling $(Y,Y')$ where both $Y$ and $Y'$ are Markov chains associated to $Q$ and
\begin{eqnarray*}
\mathbb E\po |Y_{k+1} - Y'_{k+1} |^2 \ | \ Y_k=y,Y_k'=y'\pf & \leq & \gamma |y - y'|^2.
\end{eqnarray*}
for some $\gamma>0$. If $\gamma<1$ the space is contracted by the jumps, if $\gamma>1$ it may be expanded. In any case
\begin{eqnarray*}
\mathbb E\po |X_t - X_t|^2\pf & \leq & \mathbb E\po \gamma^{N_t}\pf |x-x'|^2\\
& = & e^{\lambda(\gamma - 1) t} |x-x'|^2.
\end{eqnarray*}
The above couplings prove that for all $\mu,\nu \in \mathcal P(\R^d)$,
\begin{eqnarray*}
\mathcal W_2^2(\mu Q,\nu Q) & \leq & \gamma \mathcal W_2^2(\mu,\nu ) \\
\mathcal W_2^2(\mu P_t,\nu P_t)  & \leq & e^{-\lambda(\gamma-1) t}\mathcal W_2^2(\mu ,\nu) 
\end{eqnarray*}
which implies by Theorem \ref{TheorKuwada} that for all $f\in \mathcal C_{b,L}$
\begin{eqnarray}\label{EqSautGrad1}
|\na Q f|^2 & \leq & \gamma Q|\na f|^2 \\\label{EqSautGrad2}
|\na P_t f|^2  & \leq & e^{-\lambda(\gamma-1) t}P_t|\na f|^2. 
\end{eqnarray}
With $\Gamma$ calculus, we directly obtain \eqref{EqSautGrad2} from \eqref{EqSautGrad1}:
\begin{eqnarray*}
\Gamma_{L,|\na .|^2}f & = & \frac{\lambda}{2}\po Q|\na f|^2 + |\na f|^2 - 2 \po\na f\pf^T \po\na Qf\pf\pf\\
& \geq & \frac{\lambda}{2}\po Q|\na f|^2 + |\na f|^2 - 2  |\na f | \sqrt{\gamma Q|\na f|^2}\pf \hspace{30pt}\text{given \eqref{EqSautGrad1}}\\
& \geq & \frac{\lambda(1-\gamma)}{2}|\na f|^2
\end{eqnarray*}
and Lemma \ref{LemGammaCurvature} yields \eqref{EqSautGrad2}. Note that $P_t(x)$ is not absolutely continuous with respect to the Lebesgue measure, since $\mathbb P\po X_t = x\ | \ X_0 = x\pf = e^{-\lambda t}$, and thus we cannot retrieve the Wasserstein contraction from Theorem \ref{TheorKuwada}.

\bigskip

For an example where the metric is not the Euclidian one, we refer to the study of the TCP with linear rate on $\R_+$, with generator
\[Lf(x) = f'(x) + x \po f\po hx\pf - f(x)\pf\]
where $h\in[0,1)$. A coupling approach may be found in \cite{Malrieu2011}, and a $\Gamma$ one in \cite[Section 4.4]{MonmarchePDMP}.

\subsection{Diffusions and entropic $\Phi$'s}\label{SubsectionEntropic}
In this subsection we suppose $\A$ is fixed by the composition by any smooth 
compactly supported $a\in\mathcal C^\infty_{c}(\R)$, and by exponentiation $f \rightarrow f^\alpha$ for all $\alpha \geq 0$.

We say $L$ is a diffusion operator (or equivalently $(P_t)_{t\geq0}$ is a diffusion semi-group or $(X_t)_{t\geq0}$ a diffusion process) if one of the following equivalent conditions holds:
\begin{enumerate}[(i)]
\item for all $f,g\in\A$,
\begin{eqnarray*}
L(fg) & = & gLf + fLg + 2\Gamma(f,g)
\end{eqnarray*}
\item for all $a\in\mathcal C^\infty_c$, $f\in\A$,
\begin{eqnarray*}
L\po a(f) \pf(x) & = & a'\po f(x)\pf Lf(x) + a''\po f(x)\pf \Gamma f(x).
\end{eqnarray*}
\item for all $a\in\mathcal C^\infty_c$, $f,g\in\A$
\begin{eqnarray*}
\Gamma\po a(f), g\pf & =& a'(f) \Gamma(f,g)
\end{eqnarray*}
\end{enumerate}
When $L$ is a diffusion then conditions (i) and (ii) hold for any $a$ such that the terms are well-defined for $f\in \A$. 
As a direct consequence of the condition (ii), if $\Phi f(x) = a\po f(x) \pf$ and $L$ is a diffusion operator, $\Gamma_{\Phi} f = \frac{a''(f)}2 \Gamma(f)$. More generally,
\begin{lem}\label{LemGammaDePuissance}
If $L$ is a diffusion operator and $\Phi_2(f)(x) = a\po \Phi_1 f(x)\pf$ then
\begin{eqnarray*}
\Gamma_{\Phi_2} f & = & a'(\Phi_1 f) \Gamma_{\Phi_1} f + \frac{a''\po \Phi_1 f\pf}2 \Gamma\po\Phi_1 f\pf
\end{eqnarray*}
\end{lem}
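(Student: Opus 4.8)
The plan is to derive the formula directly from the definitions of $\Gamma_{\Phi_2}$ and $\Gamma_{\Phi_1}$ together with the diffusion property of $L$. Since $\Phi_2(f) = a(\Phi_1 f)$, the first step is to compute the differential operator $\mathrm d\Phi_2$. By the chain rule for the pointwise differential, $\mathrm d\Phi_2(f).g = a'(\Phi_1 f)\, \mathrm d\Phi_1(f).g$, so that $\mathrm d\Phi_2(f).Lf = a'(\Phi_1 f)\, \mathrm d\Phi_1(f).Lf$. Next I would apply the diffusion property in the form of condition (ii) of the definition, with the smooth function $a$ and the test function $\Phi_1 f \in \A$: this gives $L\bigl(a(\Phi_1 f)\bigr) = a'(\Phi_1 f)\, L(\Phi_1 f) + a''(\Phi_1 f)\, \Gamma(\Phi_1 f)$.

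Then I would simply assemble the pieces. By definition,
\begin{eqnarray*}
\Gamma_{\Phi_2} f & = & \frac12\po L\Phi_2(f) - \mathrm d\Phi_2(f).Lf\pf \\
& = & \frac12\po a'(\Phi_1 f) L(\Phi_1 f) + a''(\Phi_1 f)\Gamma(\Phi_1 f) - a'(\Phi_1 f)\,\mathrm d\Phi_1(f).Lf\pf \\
& = & a'(\Phi_1 f)\cdot\frac12\po L(\Phi_1 f) - \mathrm d\Phi_1(f).Lf\pf + \frac{a''(\Phi_1 f)}{2}\Gamma(\Phi_1 f) \\
& = & a'(\Phi_1 f)\,\Gamma_{\Phi_1} f + \frac{a''(\Phi_1 f)}{2}\Gamma(\Phi_1 f),
\end{eqnarray*}
which is exactly the claimed identity. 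One should of course note that the special case $\Phi_1(f) = f$ (for which $\Gamma_{\Phi_1} f = \frac12(Lf - Lf) = 0$ on the nose, since $\mathrm d\Phi_1(f).g = g$) recovers the remark preceding the lemma, namely $\Gamma_{a(\cdot)} f = \frac{a''(f)}{2}\Gamma(f)$.

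The only point requiring a modicum of care — and the one I would flag as the main (minor) obstacle — is the legitimacy of applying condition (ii) with a general smooth $a$ rather than a compactly supported one: the definition states (ii) for $a\in\mathcal C^\infty_c$, but the remark immediately after the definition extends it to any $a$ for which all the terms are well-defined for functions in $\A$, so one invokes that extension together with the standing assumption that $\A$ is fixed by the relevant operations (in particular $\Phi_1 f\in\A$ and the composite lies in $\A$, which is part of the hypothesis $\Gamma_{L,\Phi}(f)\in\A$ built into the setup). Everything else is the chain rule and bookkeeping, so no genuine difficulty arises.
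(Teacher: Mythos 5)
Your proof is correct and is exactly the paper's argument, just written out in full: the paper's proof is the one-liner ``it results from the definitions and the fact $\dd\po a(\Phi_1)\pf = a'(\Phi_1)\dd\Phi_1$,'' which is precisely the chain-rule-plus-diffusion-property computation you carry out explicitly. Your remark about extending condition (ii) from $a\in\mathcal{C}^\infty_c$ to general smooth $a$ is a legitimate point of care and is covered by the sentence in the paper immediately following the definition of diffusion operators.
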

\begin{proof}
It results from the definitions and the fact $\dd \po a(\Phi_1)\pf = a'(\Phi_1) \dd \Phi_1$.
\end{proof}

\textbf{Example 7:} If $\Phi_1\geq 0$ satisfies the curvature condition $\Gamma_{\Phi_1} \geq \rho \Phi_1$ then for all $p>1$, from Lemma \ref{LemGammaDePuissance}, $\Phi_2 = \Phi_1^p$ satisfies $\Gamma_{\Phi_2} \geq p \rho \Phi_2$, which yields $\Phi_1(P_t f) \leq e^{-\rho t} \po P_t (\Phi_1 f)^p\pf^{\frac{1}{p}}$. Of course by Jensen inequality this is weaker than $\Phi_1(P_t f) \leq e^{-\rho t}  P_t \Phi_1 f$. When $\Phi_1 f= |\na f|^2$, considering Theorem \ref{TheorKuwada}, this is consistent with the fact a contraction of $\mathcal W_p$ is stronger than a contraction of $\mathcal W_{p'}$ when $p'<p$.

\bigskip

\begin{defi*}
When $a$ is a convex function on $\R_+$ and $\nu\in\mathcal P(E)$,
\begin{eqnarray*}
\text{\emph{Ent}}_\nu^a (f) & = & \nu \po a(f) \pf - a\po \nu f\pf,
\end{eqnarray*}
which is positive, is called the $a$-entropy of $f$ with respect to $\nu$. For $E=\R^d$ we will say $a \in \mathcal C^4$  is an admissible function if $\frac{1}{a''}$ is positive concave.
\end{defi*}
Note that an admissible function is necessarily strictly convex, and thus $\text{Ent}_\nu^a (f) = 0$ iff  $\nu$-almost everywhere $f=\nu f$. When $a$ is an admissible function, the curvature condition $\Gamma_2 \geq \rho \Gamma$ implies $\Gamma_{\Gamma_{\Phi}} \geq \rho \Gamma_{\Phi}$ with $\Phi f= a(f)$ (we refer to \cite[Theorem 2 and Remark 3]{BolleyGentil} and references within for more considerations on this matter) which, following the proof of Lemma~\ref{LemInegalitePoincare}, yields
\begin{eqnarray*}
\text{Ent}_{P_t(x)}^a(f) & \leq & \frac{1-e^{-2\rho t}}{\rho} P_t\po a''(f) \Gamma(f)\pf(x).
\end{eqnarray*}
When the curvature condition does not hold with $\Gamma_2$, we can try to compute $\Gamma_{\Phi}$ for $\Phi$ of a form similar to $a''(f) \Gamma f$, which leads to the following:

\begin{lem}\label{LemGammaEntropic}
If $L$ is a diffusion operator, $C:\A\rightarrow \A^r$ is a linear operator and $a: \R_+ \rightarrow \R_+$ is an admissible function such that $\Phi f = a''(f)|C f|^2\in \A_+$ for all $f\in \A_+$, then
\begin{eqnarray}
\Gamma_\Phi f& \geq & a''(f)Cf.[L,C]f.
\end{eqnarray} 
\end{lem}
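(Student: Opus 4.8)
The plan is to compute $\Gamma_\Phi f$ explicitly, peel off the term $a''(f)\,Cf.[L,C]f$, and show that what remains is non-negative by combining the positivity of $\Gamma$ with the admissibility hypothesis on $a$.

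First I would expand $\Gamma_\Phi f$. Write $\Phi f = a''(f)\,|Cf|^2$ with $|Cf|^2 = \sum_{i=1}^r (c_if)^2$, and apply the diffusion Leibniz rule $L(gh)=gLh+hLg+2\Gamma(g,h)$ with $g=a''(f)$ and $h=|Cf|^2$. Using the diffusion chain rules to evaluate $La''(f)=a'''(f)Lf+a''''(f)\Gamma(f)$ and $\Gamma(a''(f),|Cf|^2)=a'''(f)\Gamma(f,|Cf|^2)$, together with the term-by-term identities $L|Cf|^2 = 2\sum_i (c_if)L(c_if) + 2\Gamma(Cf,Cf)$ and $\Gamma(f,|Cf|^2)=2\sum_i (c_if)\Gamma(f,c_if)$, and then subtracting $\dd \Phi(f).Lf = a'''(f)(Lf)|Cf|^2 + 2a''(f)\sum_i (c_if)\,c_i(Lf)$ before dividing by $2$, the $a'''(f)(Lf)|Cf|^2$ contributions cancel while the $a''(f)$-terms carrying $L(c_if)$ and $c_i(Lf)$ combine into $a''(f)\,Cf.[L,C]f$. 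One is left with
\[
\Gamma_\Phi f = a''(f)\,Cf.[L,C]f + a''(f)\,\Gamma(Cf,Cf) + \tfrac12 a''''(f)\,|Cf|^2\,\Gamma(f) + 2a'''(f)\sum_{i=1}^r (c_if)\,\Gamma(f,c_if),
\]
so the lemma reduces to proving that the last three terms sum to a non-negative quantity.

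For that I would first invoke admissibility: $1/a''$ positive and concave means $a''>0$ and $a''\,a''''\geq 2(a''')^2$ on $\R_+$, hence $\tfrac12 a''''(f)\,|Cf|^2\,\Gamma(f) \geq \frac{(a'''(f))^2}{a''(f)}\,|Cf|^2\,\Gamma(f)$ since $|Cf|^2\Gamma(f)\geq 0$. Then, fixing an index $i$ and using that $\Gamma(\cdot,\cdot)(x)$ is a positive symmetric bilinear form, hence obeys the Cauchy--Schwarz inequality $\Gamma(f,c_if)^2\leq\Gamma(f)\,\Gamma(c_if,c_if)$, I get
\[
a''(f)\,\Gamma(c_if,c_if) + \frac{(a'''(f))^2}{a''(f)}(c_if)^2\,\Gamma(f) + 2a'''(f)(c_if)\,\Gamma(f,c_if) \geq \Big( \sqrt{a''(f)\,\Gamma(c_if,c_if)} - \tfrac{|a'''(f)|\,|c_if|}{\sqrt{a''(f)}}\sqrt{\Gamma(f)} \Big)^{2} \geq 0.
\]
Summing over $i$ and feeding in the admissibility bound yields $a''(f)\Gamma(Cf,Cf) + \tfrac12 a''''(f)|Cf|^2\Gamma(f) + 2a'''(f)\sum_i(c_if)\Gamma(f,c_if)\geq 0$, which is exactly what is needed to conclude $\Gamma_\Phi f \geq a''(f)\,Cf.[L,C]f$.

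The main obstacle is purely the bookkeeping in the expansion: one must apply the correct chain rule to each factor and check that the $Lf$-terms cancel and the commutator $[L,C]$ emerges — there is no conceptual difficulty once the formula for $\Gamma_\Phi f$ is in hand, since the rest is just pointwise Cauchy--Schwarz and the definition of admissibility ($a''a''''\geq 2(a''')^2$). It is also worth recording, via the standing assumptions on $\A$ and the hypothesis $\Phi f\in\A_+$, that $a''(f),a'''(f),a''''(f)$ and all the $\Gamma$'s lie in $\A$, so that every manipulation above is legitimate.
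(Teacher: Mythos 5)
Your proof is correct and follows essentially the same route as the paper's: expand $\Gamma_\Phi$ via the diffusion chain and Leibniz rules, isolate the commutator term $a''(f)\,Cf.[L,C]f$, and show the remainder is nonnegative by combining the Cauchy--Schwarz inequality for $\Gamma$ with concavity of $1/a''$ and completing a square. The only cosmetic difference is that the paper works directly with $\alpha = 1/a''$ (so that concavity reads $\alpha'' \leq 0$ and one bounds $(1/\alpha)'' \geq 2(\alpha')^2/\alpha^3$), whereas you translate concavity into the equivalent pointwise inequality $a''\,a'''' \geq 2(a''')^2$ and complete the square component by component; both reductions are identical in substance.
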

\begin{proof}
Let $\Phi_1 f = | C f|^2$ and $\alpha = \frac1{a''}$. By the diffusion property,
\begin{eqnarray*}
L \po \frac{\Phi_1(f)}{\alpha(f)}\pf & = & \frac{L \Phi_1 f}{\alpha(f)} + \Phi_1 f \co  \po \frac{-\alpha'(f)L f}{\alpha^2(f)}\pf+\po \frac1\alpha\pf'' \Gamma f \cf + 2 \Gamma\po \Phi_1 f, \frac{1}{\alpha(f)}\pf
\end{eqnarray*}
with $\po \frac 1\alpha\pf'' = -\frac{\alpha''}{\alpha^2} + 2\frac{(\alpha')^2}{\alpha^3}\geq 2\frac{(\alpha')^2}{\alpha^3}$. On the other hand
\begin{eqnarray*}
\dd \po \frac{\Phi_1}{\alpha} \pf(f).Lf & = & \frac{\dd \Phi_1(f).Lf}{\alpha(f)} - \Phi_1 f\frac{\alpha'(f) Lf}{\alpha^2(f)}.
\end{eqnarray*}
Therefore
\begin{eqnarray*}
\Gamma_{\Phi} f & \geq & \frac{\Gamma_{\Phi_1}(f)}{\alpha(f)} + \frac{(\alpha')^2(f)}{\alpha^3(f)} \Phi_1 f \Gamma f + \Gamma\po \Phi_1 f, \frac{1}{\alpha(f)}\pf
\end{eqnarray*}
By Lemma \ref{LemGammaQuadratic}, $\Gamma_{\Phi_1}f = \Gamma\po Cf\pf + Cf.[L,C]f$, and by the diffusion property
\[\Gamma\po\Phi_1 f, \frac{1}{\alpha(f)}\pf = \frac{\alpha'(f) 2 C f. \Gamma(Cf,f)}{\alpha^2(f)}\]
 where by convention $\Gamma(Cf,f) = \po \Gamma(c_1 f),\dots, \Gamma(c_r f,f)\pf$. As a positive bilinear form $\Gamma$ satisfies the Cauchy-Schwarz inequality $|\Gamma(Cf,f)|^2 \leq \Gamma(Cf)\Gamma f$, and thus
 \begin{eqnarray*}
\Gamma_{\Phi} f & \geq & \frac{\Gamma(Cf)+ Cf.[L,C]f}{\alpha(f)}  + \frac{(\alpha')^2(f)}{\alpha^3(f)} \Phi_1 f \Gamma f- 2 \sqrt{\frac{(\alpha')^2(f)\Phi_1(f) \Gamma f}{\alpha^3(f)}}\sqrt{\frac{\Gamma(Cf)}{\alpha(f)}}\\
& \geq & \frac{Cf.[L,C]f}{\alpha(f)}.
\end{eqnarray*}  
\end{proof}

\textbf{Example 8:}
Let $b$ be a smooth vector field on $\R^d$, $D = Q^T Q$ be a constant semidefinite positive matrix and
\begin{eqnarray*}
L f & = & b(x).\na f + \text{div}\po D \na f\pf.
\end{eqnarray*}
The carr\'e du champ is $\Gamma f = (\na f)^T D \na f = |Q \na f|^2$. For $M = P^TP$ a constant positive matrix, let
\[\Phi_M(f) = a''(f)|P\na f|^2 = a''(f)(\na f)^T M \na f .\]
Since $[L,P\na] = -PJ_b\na$, Lemma \ref{LemGammaEntropic} yields
\begin{eqnarray}\label{EqGammadeDiffusion}
\Gamma_{\Phi_M}f & \geq &  -a''(f)(\na f)^T M J_b \na f
\end{eqnarray}
This explains the computations of Examples 2 and 3. For Ornstein-Uhlenbeck processes, see also Corollary \ref{CorOU} below. But above all, it is the key ingredient of the next section.

\section{General results}\label{SectionGeneral}


 We propose now to prove the $\Gamma$ counterparts of Villani's general results \cite[Theorem A.15 p.158 and Theorem 28, p.42]{Villani2009} concerning (entropic) hypoellipticity and hypocoercivity for diffusions operators.

Suppose the diffusion generator $L$ on $\R^d$ is on H\"ormander form
\[L = B_0 + \sum_{i=1}^d B_i^2\]
where $B = (B_j)_{0\leq j\leq r} = \po b_j.\na\pf_{0\leq j\leq r}$ is a derivation operator on $\R^d$ with $\mathcal C^\infty$ coefficient $b_j$'s which are not necessarily linearly independent. Let $a$ be an admissible function, that is to say a strictly convex $\mathcal C^4$ function from $\R_+$ to $\R_+$ such that $\frac{1}{a''}$ is concave. 
 Let $\A$ be the set of $\mathcal C^\infty $ functions on $\R^d$ whose all derivatives grow at most as polynomials at infinity. Suppose $\A$ is fixed by $P_t$, $L$ and that if $\A_{p}$ is the set of functions in $\A$ which are bounded by a positive constant, $f\in\A_p$ implies $a(f) \in \A$.  Moreover suppose that if $\mu\in\mathcal P(\R^d)$ is invariant for $P_t$ and have a finite exponential moment then for all positive $f \in L^1(\mu)$ such that $\text{Ent}_\mu^a f < \infty$, there exists a sequence $(f_m)_{m\in\mathbb N}$ of Lipschitz bounded functions in $\A_p$ such that  as $m$ goes to infinity, $f_m \rightarrow f$ in $L^1(\mu)$ and for all $t\geq 0$,  $\text{Ent}_\mu^a P_t f_m \rightarrow \text{Ent}_\mu^a P_t f$ (this assumption is clearly satisfied if $a(f)=f^p$, $p>1$, since  in $L^p(\mu)$, $\A$ is dense and $P_t$ is continuous). 
 
\bigskip 
 
For $i\geq 0$, $n\in \mathbb N$, we write
\begin{eqnarray*}
 C_i & =& (c_{i,1}.\na,\dots,c_{i,n}.\na)\\
R_i & = & (r_{i,1}.\na,\dots,r_{i,n}.\na)
\end{eqnarray*}
where the $r_{i,j}$'s and $c_{i,j}$'s are $\mathcal C^\infty$ vector fields on $\R^d$, and $Z_i = (z_{i,k,l})_{1\leq k,l\leq  n} \in \mathcal M_n(\R)$ where the $z_{i,k,l}$'s are $\mathcal C^\infty$ scalar fields. By convention 
 $R_1^TR_2$ will stand for the quadratic operator $f\mapsto (R_1 f)^TR_2 f$, and 
 recall we set
\begin{eqnarray*}
[L,C_i] & := & \po [L,c_{i,1}],\dots,[L,c_{i,n}]\pf.
\end{eqnarray*}

\begin{thm}\label{TheoGeneralEllipticite}
Suppose there exist $N_c\in \mathbb N$ and $\lambda,\Lambda,m>0$ such that for $i\in\llbracket 0,N_c+1\rrbracket$ there exist derivation operators $C_i$ and $R_i$ and a matrix field $Z_i$, all with coefficients in $\A$, satisfying:
\begin{enumerate}[(i)]
\item $C_{N_c+1} = 0$,\ and \  $[B_0,C_i] = Z_{i+1} C_{i+1} + R_{i+1}$ \  for all $i\in\llbracket 0,N_c\rrbracket$ .
\item $[B_j,C_i] = 0$ \ for all $i\in\llbracket 0,N_c\rrbracket$, $j\in \llbracket 1,d\rrbracket$, 
\item  $\lambda \leq \frac{Z_i+Z_i^T}2 \leq \Lambda$ \ for all $i\in\llbracket 0,N_c\rrbracket$,
\item $C_0^TC_0 \leq m \underset{j\geq 1}\sum B_j^TB_j$ and $R_i^T  R_i \leq m \underset{j<i}\sum C_j^T C_j$ for all $i\in\llbracket 0,N_c+1\rrbracket$.
\end{enumerate}
Then there exists $c>0$ such that for all $f\in\A_p$, $x\in\R^d$,  $t>0$  and $i\in\cco 0,N_c \ccf$
\begin{eqnarray}
a''\po P_t f(x)\pf  |C_i \na P_t f|^2(x) & \leq & c (1-e^{-t})^{-2 i-1} \text{\emph{Ent}}_{P_t(x)}^a f.
\end{eqnarray}
\end{thm}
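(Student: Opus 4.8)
The plan is to run the interpolation / $\Gamma$-calculus machinery of Lemma~\ref{LemRegularisation}, part (b), with a time-dependent functional $\Phi_t$ built as a suitable linear combination of the quantities $a''(f)|C_i\na f|^2$, $i\in\cco 0,N_c\ccf$, weighted by increasing powers of a time scale. More precisely, I would look for constants $\varepsilon_0 > \varepsilon_1 > \dots > \varepsilon_{N_c}>0$ and set
\begin{eqnarray*}
\Phi_t(f) & = & \sum_{i=0}^{N_c} \varepsilon_i\, c(t)^{2i+1}\, a''(f)\,|C_i\na f|^2,
\end{eqnarray*}
where $c(t)$ is comparable to $1-e^{-t}$ (say $c(t)=1-e^{-t}$ itself, so $c'(t)=1-c(t)$ and $c(t)\sim t$ near $0$, $c(t)\to 1$ at infinity). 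The target inequality is exactly the statement that $\int\Phi_t(P_tf)\,\dd\mu \leq \int\Phi_0(f)\,\dd\mu = 0$ — since $c(0)=0$ kills every term — for the family of invariant measures $P_t(x)$; so by Lemma~\ref{LemRegularisation}(b) it suffices to verify the differential inequality $2\int\Gamma_{\Phi_t}(f)\,\dd\mu \geq \int\partial_t\Phi_t(f)\,\dd\mu$ for all $f\in\A_p$, $t\in[0,1]$ (and then absorb the $t\ge 1$ regime, where $(1-e^{-t})^{-2i-1}$ is bounded, into the constant $c$).

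**Key steps.** First, compute $\partial_t\Phi_t$: it produces terms $(2i+1)\varepsilon_i c(t)^{2i}c'(t)\,a''(f)|C_i\na f|^2$, plus the contribution of $\partial_t$ hitting $a''(f)$, which is harmless since $f$ (hence $P_{t-s}f$ in the interpolation) does not depend on the outer $t$ — so only the explicit $c(t)^{2i+1}$ factors differentiate. Second, lower-bound $\Gamma_{\Phi_t}(f)$ term by term. For each $i$, Lemma~\ref{LemGammaEntropic} (applicable because $L$ is a diffusion and $a$ is admissible) gives $\Gamma_{a''(\cdot)|C_i\na\cdot|^2}(f) \geq a''(f)\,(C_i\na f)\cdot[L,C_i\na]f$, and $[L,C_i\na] = [B_0,C_i\na]+\sum_j[B_j^2,C_i\na]$; assumption (ii) kills all the $B_j^2$-commutators ($j\ge 1$), and assumption (i) rewrites the $B_0$-commutator as $Z_{i+1}C_{i+1}\na f + R_{i+1}\na f$ (with $C_{N_c+1}=0$ closing the chain). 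So the "diagonal" part $(C_i\na f)\cdot Z_{i+1}C_{i+1}\na f$ couples level $i$ to level $i+1$, and the $R_{i+1}$ part is an error. Third — the crux of the bookkeeping — assemble the full sum $2\Gamma_{\Phi_t}-\partial_t\Phi_t$ as a quadratic form in the vector $(a''(f)^{1/2}|C_i\na f|)_{i}$ (using Cauchy–Schwarz, assumption (iii) to control the $Z$'s from above and below, and assumption (iv) together with the ellipticity input $\Gamma f=\sum_{j\ge1}|B_j\na f|^2$ to dominate the $R_i$ and $C_0$ error terms) and choose the $\varepsilon_i$ in decreasing cascade so that the off-diagonal couplings are absorbed and the resulting form is nonnegative. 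The integration against $\mu$ is used (as in Lemma~\ref{LemRegularisation}(b)) to discard total-$L$-derivative terms and to integrate by parts the cross terms if needed; the $a''$-specific gain in Lemma~\ref{LemGammaEntropic} is what makes the entropic ($a$-entropy) version go through rather than just the $L^2$ one.

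**Main obstacle.** The genuinely delicate point is the simultaneous choice of the weights $\varepsilon_i$ and $c(t)$: one needs the positive diagonal gain $(2i+1)\varepsilon_i c(t)^{2i}c'(t)$ coming from $-\partial_t$ at level $i$ to beat, after Young's inequality, the cross term linking level $i-1$ to level $i$, which carries weight $\varepsilon_{i-1}c(t)^{2i-1}$ and after balancing contributes at order $\varepsilon_{i-1}^2\varepsilon_i^{-1}c(t)^{2i-2}\cdot(\text{stuff})$ against $\varepsilon_i c(t)^{2i}$ — and since $c(t)\to 0$ as $t\to0$, the powers of $c(t)$ must match exactly, which is precisely why the exponent is $2i+1$ and not something else, and why $c'(t)$ must stay bounded below near $0$ (hence the normalization $c(t)=1-e^{-t}$, $c'(0)=1$). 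Handling the error terms $R_i$ uniformly — they live "one level down" by assumption (iv), exactly the direction in which the weights are larger — requires care but is the standard Villani-type absorption; the real work is verifying that the finite induction on $i\in\cco 0,N_c\ccf$ closes with constants independent of $f,x,t$, which I expect to be a somewhat lengthy but routine quadratic-form estimate once the scaling is fixed.
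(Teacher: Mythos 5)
There is a genuine gap in your plan, and it is the single most important structural feature of the paper's proof: you build $\Phi_t$ as a sum of \emph{pure} squares
\[\Phi_t(f)=\sum_{i=0}^{N_c}\varepsilon_i\,c(t)^{2i+1}\,a''(f)\,|C_i\na f|^2,\]
whereas the paper (Lemma~\ref{LemGeneralCalculPhi}) uses H\'erau--Villani \emph{twisted} squares $\Phi_i = a''(f)\,\varepsilon_i\,\alpha^{2i-1}\,|(C_{i-1}+\varepsilon_i\alpha\,C_i)\na f|^2$ together with an explicit $a(f)$ term inside $\Phi_0$. These differences are not cosmetic, they are what makes the cascade close.

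First, a sign problem: you claim a ``positive diagonal gain $(2i+1)\varepsilon_i c^{2i}c'$ coming from $-\partial_t$''. With $c$ increasing and $c(0)=0$ (which you need so that $\Phi_0(f)=0$), $\partial_t\Phi_t\ge 0$ pointwise, hence $-\partial_t\Phi_t\le 0$: this is a \emph{loss}, not a gain. Meanwhile, with pure squares Lemma~\ref{LemGammaEntropic} gives only $\Gamma_{a''|C_i\na\cdot|^2}f\ge a''(f)\,(C_i\na f)\cdot(Z_{i+1}C_{i+1}\na f + R_{i+1}\na f)$, which is purely an off-diagonal cross term plus an error — there is no positive coefficient in front of $|C_i\na f|^2$ for $i\ge 1$. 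So $2\Gamma_{\Phi_t}-\partial_t\Phi_t$ has no diagonal coercivity at levels $i\ge 1$ and the quadratic form cannot be made nonnegative. The paper's twisted ansatz fixes exactly this: expanding $(C_{i-1}+\varepsilon_i\alpha C_i)^T[L,C_{i-1}+\varepsilon_i\alpha C_i]$ and using $[B_0,C_{i-1}]=Z_iC_i+R_i$, the cross term $\varepsilon_i\alpha C_i\cdot Z_i C_i$ produces $\approx 2\lambda\varepsilon_i^2\alpha^{2i}|C_i\na f|^2$, which is the positive term you were missing. This is precisely where assumption (iii) ($Z_i\ge\lambda>0$) is used, and it explains why the exponent in the theorem is $-2i-1$.

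Second, your $\Phi_t$ does not contain the $a(f)$ term. This is needed twice: (i) to make the conclusion land on the entropy — the paper applies Lemma~\ref{LemRegularisation}(a) (not (b), since no invariant measure is assumed here) to get $\Phi_t(P_tf)\le P_t\Phi_0 f$, and the $a(f)$ inside $\Phi_0$ is what produces $P_t a(f)-a(P_tf)=\text{Ent}_{P_t(x)}^a f$ on the right-hand side; with your $\Phi_0\equiv 0$, the resulting inequality $\Phi_t(P_tf)\le 0$ is vacuous or false; (ii) $2\Gamma_{a(\cdot)}f=a''(f)\Gamma f$ furnishes the carr\'e du champ term which, via assumption (iv) ($C_0^TC_0\le m\sum B_j^TB_j$), absorbs the loss at level $0$. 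Without the $a(f)$ term the level-$0$ budget does not balance either. In short, your overall strategy (build a time-weighted $\Phi_t$, apply the interpolation lemma, cascade through the commutator hierarchy using (i)--(iv)) is the right high-level picture, but the specific ansatz you wrote down is missing both the twisted-gradient structure and the zeroth-order entropy term, and consequently will not yield a nonnegative form.
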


\begin{thm}\label{TheorGeneralCoercivite}
Under the same assumptions as Theorem \ref{TheoGeneralEllipticite}, if moreover there exist $\rho,K>0$ and a measure $\mu\in\mathcal P(\R^d)$ such that
\begin{enumerate}[(a)]
\item $\underset{i\geq 0}\sum C_i^T C_i \geq \rho$,
\item $\mu$ is invariant for $P_t$ and satisfies the entropic inequality
\begin{eqnarray}\label{EqThoerGeneCoer}
\forall f\in \A_p,\hspace{30pt} \text{\emph{Ent}}_\mu^a f & \leq & K \int a''(f) |\na f|^2 \dd \mu,
\end{eqnarray}
\end{enumerate}
then there exist $\kappa>0$ such that for all $t>0$ and for all $f$ with \emph{Ent}$_\mu^a f <\infty$,
\begin{eqnarray}\label{EqTheorCoerc}
\text{\emph{Ent}}_\mu^a (P_t f) & \leq &  e^{-\kappa t(1-e^{-t})^{2N_c}} \text{\emph{Ent}}_\mu^a f.
\end{eqnarray}
\end{thm}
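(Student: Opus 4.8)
The plan is to feed the pointwise hypoelliptic bounds of Theorem~\ref{TheoGeneralEllipticite} into an integrated inequality for $h(t):=\text{Ent}_\mu^a(P_tf)$, extract from it a single ``one-step'' contraction estimate, and then bootstrap it into the exponential decay \eqref{EqTheorCoerc} by a sub-multiplicativity argument. I first prove \eqref{EqTheorCoerc} for $f\in\A_p$ (and $\text{Ent}_\mu^a f<\infty$, otherwise there is nothing to prove); the general case follows at the end by the density assumption from the standing hypotheses, approximating $f$ by bounded Lipschitz $f_m\in\A_p$ with $f_m\to f$ in $L^1(\mu)$ and $\text{Ent}_\mu^a(P_tf_m)\to\text{Ent}_\mu^a(P_tf)$ for every $t\ge0$, and passing to the limit. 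Note that for $f\in\A_p$ one also has $P_rf\in\A_p$ for every $r\ge0$, since $\A$ is fixed by $P_r$ and $P_r$ preserves the defining bound of $\A_p$.

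First I would integrate the conclusion of Theorem~\ref{TheoGeneralEllipticite} against $\mu$. Since $\mu$ is invariant, $\int\text{Ent}_{P_t(x)}^a f\,\mathrm dx\,\mu=\mu\big(P_t(a(f))\big)-\int a(P_tf)\,\mathrm d\mu=\mu(a(f))-\int a(P_tf)\,\mathrm d\mu=h(0)-h(t)$, so for each $i\in\llbracket 0,N_c\rrbracket$,
\[
\int a''(P_tf)\,|C_i\na P_tf|^2\,\mathrm d\mu\ \le\ c\,(1-e^{-t})^{-2i-1}\,(h(0)-h(t)).
\]
Summing over $i$, using $(1-e^{-t})^{-2i-1}\le(1-e^{-t})^{-2N_c-1}$, then assumption (a) in the form $\sum_i|C_i\na g|^2\ge\rho|\na g|^2$ and assumption (b) applied to $g=P_tf\in\A_p$, one gets
\[
h(t)\ \le\ K\!\int a''(P_tf)|\na P_tf|^2\,\mathrm d\mu\ \le\ A\,(1-e^{-t})^{-2N_c-1}\,(h(0)-h(t)),\qquad A:=\frac{Kc(N_c+1)}{\rho},
\]
and rearranging gives the one-step estimate
\[
h(t)\ \le\ \frac{h(0)}{1+\theta\,(1-e^{-t})^{2N_c+1}},\qquad \theta:=\min\!\big(1,\tfrac1A\big)\in(0,1].
\]
In particular $h$ is non-increasing, and, applying this with $f$ replaced by $P_rf$, we get the time-homogeneous form $h(r+s)\le h(r)\,\big(1+\theta(1-e^{-s})^{2N_c+1}\big)^{-1}$ for all $r,s\ge0$.

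Next comes the bootstrap. Iterating the last inequality over $n$ steps of length $s=t/n$ and using $\ln(1+u)\ge u/2$ for $u\in[0,1]$ yields
\[
h(t)\ \le\ h(0)\,\big(1+\theta(1-e^{-t/n})^{2N_c+1}\big)^{-n}\ \le\ h(0)\,\exp\!\Big(-\tfrac{n\theta}{2}(1-e^{-t/n})^{2N_c+1}\Big).
\]
It remains to choose $n=n(t)$ so that $\tfrac{n\theta}{2}(1-e^{-t/n})^{2N_c+1}\ge\kappa\,t\,(1-e^{-t})^{2N_c}$ for a fixed $\kappa>0$. For $0<t\le1$ I take $n=1$: after dividing by $(1-e^{-t})^{2N_c}$ this reduces to $\kappa\le\tfrac{\theta}{2}\,\tfrac{1-e^{-t}}{t}$, which holds with $\kappa=\tfrac{\theta}{2}(1-e^{-1})$ since $t\mapsto\tfrac{1-e^{-t}}{t}$ is non-increasing. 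For $t>1$ I take $n=\lceil t\rceil$, so that $t/n\in(\tfrac12,1]$ and $n\ge t$; then the left side is $\ge\tfrac{\theta t}{2}(1-e^{-1/2})^{2N_c+1}$ while $(1-e^{-t})^{2N_c}\le1$, so $\kappa=\tfrac{\theta}{2}(1-e^{-1/2})^{2N_c+1}$ works. Taking $\kappa$ to be the smaller of these two constants gives \eqref{EqTheorCoerc} for all $t>0$ and $f\in\A_p$, and the density argument concludes.

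The main obstacle is precisely this bootstrap. The hypoelliptic constant $(1-e^{-t})^{-2N_c-1}$ degenerates as $t\to0$, so one cannot simply differentiate the one-step estimate at $s=0$ to close a Gronwall inequality (for $N_c\ge1$ that only gives $h'\le0$); the sub-multiplicative iteration is what turns the single estimate into genuine exponential decay, and the choice $n=\lceil t\rceil$ for large $t$ is exactly what reproduces the factor $t(1-e^{-t})^{2N_c}$ in the exponent. Beyond that, the remaining points are routine: that $\A_p$ is stable under $P_t$ (used above), and that $\mu$ has the finite exponential moment needed to invoke the density assumption (automatic when $a(f)=f^p$).
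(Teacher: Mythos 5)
Your proof is correct, but it takes a genuinely different route from the paper's. The paper does not pass through the pointwise conclusion of Theorem~\ref{TheoGeneralEllipticite}: instead it reuses the time-dependent functional $\Phi_{(t)}$ built in that proof, sets $W(t)=\frac{1}{\lambda_0}\int\Phi_{(t)}(P_tf)\,\dd\mu-a(\mu f)$, notes that assumption (a) turns the lower bound on $2\Gamma_{\Phi_{(t)}}-\partial_t\Phi_{(t)}$ into $\rho_1(1-e^{-t})^{2N_c}\,a''(f)\big(|\na f|^2+\sum_i|C_if|^2\big)$, uses (b) to close this into $W'(t)\leq -\rho_2(1-e^{-t})^{2N_c}W(t)$, and concludes by Gronwall, which gives the decay rate $\rho_2\int_0^t(1-e^{-s})^{2N_c}\dd s\geq\kappa\, t(1-e^{-t})^{2N_c}$. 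You instead treat Theorem~\ref{TheoGeneralEllipticite} as a black box, integrate it against $\mu$ to get the reverse-Poincar\'e-type one-step contraction $h(t)\leq h(0)/\big(1+\theta(1-e^{-t})^{2N_c+1}\big)$, and then iterate with a judiciously chosen number of steps. Both routes are valid and yield constants of the same order; yours is more modular (it decouples Theorem~\ref{TheorGeneralCoercivite} from the internal $\Phi_{(t)}$ machinery, in the spirit of the classical ``instantaneous regularization + functional inequality $\Rightarrow$ exponential decay'' scheme), at the price of a sub-multiplicativity bootstrap that the paper avoids: the paper's Gronwall argument operates one power of $(1-e^{-t})$ lower than the pointwise estimate, so no iteration is needed and the degeneracy at $t=0$ is absorbed directly into the integral $\int_0^t(1-e^{-s})^{2N_c}\dd s$. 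Your observation that one cannot simply differentiate the one-step estimate at $s=0$ for $N_c\geq 1$ is exactly why the bootstrap (or the paper's distorted Lyapunov functional) is necessary. Two cosmetic remarks: you reproduce the theorem's notation $|C_i\na g|^2$, but since $C_i$ is already a derivation it should read $|C_ig|^2$ (and correspondingly assumption (a) gives $\sum_i|C_ig|^2\geq\rho|\na g|^2$), and the closing density step matches the paper's, which observes that \eqref{EqThoerGeneCoer} implies a Poincar\'e inequality and hence a finite exponential moment for $\mu$.
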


\textbf{Remark:} Note that these are not exactly the same results as in \cite{Villani2009}: the commutation condition (ii) we require is a very strong assumption (reminiscent of \cite[Remark 33 p. 45]{Villani2009}). It holds if the matrix diffusion and the $C_i$'s are constant, or in dimension 2 if $B=(B_1,0)$ and $C_1 = B_1$, and this is basically all. That being said for most of the models we have in mind (and to which Villani's method have been applied, to our knowledge) the diffusion matrix is indeed constant. 

On the plus side our results do not need to consider adjoint operators in $L^2(\mu)$ where $\mu$ is the invariant measure (which is the case in Villani's work but also for the method of Dolbeault, Mouhot and Schmeiser), which would require to have an explicit expression or at least some informations on the density $\mu(x)$ (see discussion \cite[\S 9.2 p.67]{Villani2009}), or to work with the wrong invariant measure as in \cite{Hoffmann2016}. By contrast, Theorem~\ref{TheoGeneralEllipticite} does not even need an invariant measure, while Theorem~\ref{TheorGeneralCoercivite} only needs its existence and the functional inequality \eqref{EqThoerGeneCoer} (which may be hard to establish, of course, when $\mu$ is not explicit; see \cite{MonmarchePDMP} for such an example, with no hypoellipticity).

\bigskip

Both theorems are based on the following computation:

\begin{lem}\label{LemGeneralCalculPhi}
Let  $\alpha(t) = 1 - e^{-t}$, $(\varepsilon_i)_{i\in \llbracket 0,N_c\rrbracket}\in(0,1)^{N_c+1}$ and
\begin{eqnarray*}
\Phi_{0} f & = & a(f)\  + \ a''(f) \varepsilon_{0}^2 \alpha(t) |C_0 f|^2  \\
\Phi_i f & = & a''(f) \varepsilon_{i} \alpha^{2i-1}(t)|\po C_{i-1} + \varepsilon_i\alpha(t) C_{i}\pf f|^2\hspace{25pt}\text{for }i\in\llbracket 1,N_c\rrbracket.
\end{eqnarray*}
There exist $b_1,b_2,b_3>0$ and $\varepsilon_* \in (0,1)$ such that for all $i\in\llbracket 0,N_c\rrbracket$, if $\varepsilon_i < \varepsilon_*$,
\begin{eqnarray*}
\frac{\po2 \Gamma_{\Phi_i} -  \partial_t \Phi_i\pf f}{a''(f)} & \geq &  - b_1 \po\sum_{j< i} \alpha^{2j}|C_j f|^2\pf + b_2\varepsilon_i^2\alpha^{2i}|C_{i} f|^2 - b_3 \alpha^{2(i+1)}\varepsilon_i^4 |C_{i+1}  f|^2 .
\end{eqnarray*}
\end{lem}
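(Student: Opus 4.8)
The plan is to compute $\Gamma_{\Phi_i} f$ using the tools assembled in Section~\ref{SectionGamma}, starting from Lemma~\ref{LemGammaEntropic} (which reduces the quadratic entropic case to a commutator term), then tracking the time-dependence through $\partial_t \Phi_i$, and finally absorbing all cross terms into the three displayed quadratic expressions by Cauchy--Schwarz with the small parameter $\varepsilon_i$. Concretely, for $i\geq 1$ write $\Phi_i f = a''(f)|C_i' f|^2$ with $C_i' = \varepsilon_i^{1/2}\alpha^{(2i-1)/2}(t)\,(C_{i-1}+\varepsilon_i\alpha(t)C_i)$, apply Lemma~\ref{LemGammaEntropic} to get $\Gamma_{\Phi_i} f \geq a''(f)\,C_i' f.[L,C_i']f$, and expand $[L,C_i'] = [B_0,C_i'] + \sum_{j\geq 1}[B_j^2,C_i']$. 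By assumption (ii) of Theorem~\ref{TheoGeneralEllipticite}, the $B_j$-commutators for $j\geq 1$ vanish on the $C_k$'s, so only the $B_0$-part survives (recall $[L,C_k] = [B_0,C_k] = Z_{k+1}C_{k+1}+R_{k+1}$ by (i)); this is the step that makes the computation tractable. The $i=0$ case is handled the same way but with the extra leading term $a(f)$, whose $\Gamma_{\Phi}$ is $\tfrac12 a''(f)\Gamma f = \tfrac12 a''(f)|\na f \text{ in } B\text{-coordinates}|^2 \geq 0$ and contributes nothing harmful, while $\partial_t$ hits only the $\varepsilon_0^2\alpha(t)|C_0 f|^2$ piece.

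Next I would organize the bookkeeping. The derivative $\partial_t \Phi_i$ produces, via $\partial_t \alpha = e^{-t} = 1-\alpha$, a term proportional to $\Phi_i/\alpha$ of order $\alpha^{2i-2}$ — this is the dangerous one since it scales like $|C_{i-1}f|^2$ with no $\varepsilon$ gain, and it is exactly what the $-b_1\sum_{j<i}\alpha^{2j}|C_j f|^2$ term on the right is designed to absorb. The commutator term $C_i' f.[B_0,C_i']f$ expands into: a "good" part $\sim \varepsilon_i \alpha^{2i-1}\cdot \varepsilon_i\alpha \cdot (\text{something like }\tfrac{Z+Z^T}{2})|C_i f|^2$, bounded below using assumption (iii) $\tfrac{Z_i+Z_i^T}{2}\geq\lambda$, giving the $+b_2\varepsilon_i^2\alpha^{2i}|C_i f|^2$; a "next-level leak" $\sim \varepsilon_i^2\alpha^{2i}\cdot\varepsilon_i\alpha\cdot Z_{i+1}|C_{i+1}f|^2$ times cross terms, which after Cauchy--Schwarz is dominated by $b_3\varepsilon_i^4\alpha^{2(i+1)}|C_{i+1}f|^2$ plus a fraction of the good term; and various cross terms involving $R_{i+1}$ or $R_i$, which by assumption (iv) $R_k^TR_k \leq m\sum_{j<k}C_j^TC_j$ are controlled by $\sum_{j<i}\alpha^{2j}|C_j f|^2$ and fold into $b_1$. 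One also uses $C_0^TC_0\leq m\sum_{j\geq1}B_j^TB_j$ so that the term $\Gamma f = \sum|B_j f|^2 \geq \tfrac1m |C_0 f|^2$ can feed the $i=0$ estimate if needed.

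The key mechanism throughout is that every time we apply Cauchy--Schwarz to a cross term $\langle u,v\rangle$ with $u$ at "level $i$" and $v$ at "level $i\pm1$", we split it as $\eta|u|^2 + \eta^{-1}|v|^2$ and choose the $\varepsilon_i$'s small enough — this is where "$\varepsilon_i<\varepsilon_*$" enters — so that the $\eta|u|^2$ piece eats at most half of $b_2\varepsilon_i^2\alpha^{2i}|C_i f|^2$, while the $\eta^{-1}|v|^2$ piece is either of the allowed form $\alpha^{2(i+1)}\varepsilon_i^4|C_{i+1}f|^2$ or gets demoted to the $b_1\sum_{j<i}$ bucket. The constants $b_1,b_2,b_3$ and $\varepsilon_*$ depend only on $\lambda,\Lambda,m,N_c$ and the (uniform, since coefficients are in $\A$ and we may assume the relevant sup-norms finite) bounds on $|Z_i|$, and crucially not on $t$ because all $t$-dependence has been made explicit through powers of $\alpha(t)\in(0,1)$. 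The main obstacle I anticipate is purely organizational rather than conceptual: keeping careful track of which power of $\alpha$ and which power of $\varepsilon_i$ attaches to each of the (quite many) cross terms generated by expanding $|(C_{i-1}+\varepsilon_i\alpha C_i)f|^2$ against its $B_0$-commutator, and verifying that no term of order $\alpha^{2i}|C_i f|^2$ with a bad sign and no $\varepsilon_i$ factor can appear — the structure of assumptions (i)--(iv) is precisely what guarantees this, so the proof amounts to a disciplined expansion followed by repeated weighted Cauchy--Schwarz.
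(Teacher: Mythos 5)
Your proposal is correct and follows essentially the same route as the paper: Lemma~\ref{LemGammaEntropic} (together with Lemma~\ref{LemGammaDePuissance} for the $a(f)$ term at $i=0$) reduces $\Gamma_{\Phi_i}$ to commutator terms, assumptions (i)--(ii) turn $[L,C_k]$ into $Z_{k+1}C_{k+1}+R_{k+1}$ and kill the $B_j$ contributions, and the remaining bookkeeping is exactly the systematic weighted Cauchy--Schwarz you describe (the paper organizes it in a $5\times 2$ array of cross terms). Two small imprecisions that do not affect the soundness of the strategy: the $\partial_t\alpha^{2i-1}$ term actually does carry a prefactor $\varepsilon_i$ (so there \emph{is} an $\varepsilon$ gain, though none is needed since the term is absorbed into the $b_1$ bucket), and the $\tfrac12 a''(f)\Gamma f$ coming from $a(f)$ at $i=0$ is not merely ``not harmful'' but is the \emph{source} of the positive $|C_0 f|^2$ term via $C_0^TC_0\leq m\Gamma$.
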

\begin{proof}
For $i=0$, since $\sum_{j<0} |C_j  f|^2 =0$ and $\alpha'(t) \leq 1$, from Lemmas \ref{LemGammaDePuissance} and \ref{LemGammaEntropic},
\begin{eqnarray*} 
\frac{\po2 \Gamma_{\Phi_0} -  \partial_t \Phi_0\pf}{a'' }  & \geq &  \Gamma  + \varepsilon_0^2 C_0^T \po 2 \alpha \po Z_1 C_1 + R_1\pf - \alpha' C_0\pf\\
& \geq & \po\frac1m - \varepsilon_0^2 (2\sqrt m +1)\pf C_0^T C_0 - 2\varepsilon_0^2 \alpha \Lambda C_0^T C_1\\
& \geq & \po\frac1{2m} - \varepsilon_0^2 (2\sqrt m +1)\pf C_0^T C_0 - 2m\varepsilon_0^4 \alpha^2 \Lambda^2 C_1^T C_1.
\end{eqnarray*}
For $i\geq 1$, similarly, 
\begin{eqnarray}\label{EqcalculPhiGeneral}
\frac{\po2 \Gamma_{\Phi_i} -  \partial_t \Phi_i\pf}{a'' }  & \geq &  \varepsilon_i(C_{i-1}+\alpha \varepsilon_i C_{i})^T\Big[2\alpha^{2i-1}(Z_{i} C_{i} + R_{i})  - 2\alpha^{2i-1} \alpha'\varepsilon_i C_{i}\notag\\
& & + 2\alpha^{2i}\varepsilon_i(Z_{i+1} C_{i+1} + R_{i+1}) - (2i-1)\alpha'\alpha^{2i-2}(C_{i-1}+\alpha\varepsilon_i C_{i})\Big].
\end{eqnarray}
In order to describe how we bound this expression, the following array $\{ q_{i,j}, i=1,2,\ j=1..5\}$ is built as follow: at line $i$ and column $j$, $q_{i,j}$ is the coefficient in the left-hand side of \eqref{EqcalculPhiGeneral} of the product $c_j^Tl_i$ (the coefficient of $C_i^T C_{i-1}=C_{i-1}^TC_i$ is distributed between $c_1^T l_2$ an $c_2^T l_1$). Moreover the decomposition $q_{i,j} = 2 p_{i,j} \times p'_{i,j}$ with the $\times$ sign means that we bound $ q_{i,j} (c_j f)^T(l_i f)   \geq -|p_{i,j} l_i  f|^2 - |p'_{i,j} c_j  f|^2$ (the left term goes with $l_i$, the right one with $c_j$).
\begin{displaymath}
\begin{array}{|l|c|c|}
\hline
 & c_1 = C_{i-1} & c_2 = C_{i}\\
 \hline
 & & \\
 l_1 = C_{i-1} & -\varepsilon_i(2i-1)\alpha'\alpha^{2i-2} 	&  - 2\po (2i-1) \alpha' \alpha^{i-2}\pf \times\po\varepsilon_i^2 \alpha^{i}\pf \\
 & & \\
 l_2 = C_{i} &  2 \po  \sqrt{\frac{ \lambda}{2}}\alpha^i \varepsilon_i  \pf\times \po \sqrt{\frac{2}{\lambda}} \alpha^{i-1}(Z_i-\varepsilon_i \alpha')\pf &  \alpha^{2i}\varepsilon_i^2 \po  2Z_{i} - (2i+1)\alpha'\varepsilon_i \pf  \\
 & & \\
 l_3 = C_{i+1} & 2 \po \alpha^{i+1} \varepsilon_i^{2} \pf \times \po\alpha^{i-1} Z_{i+1} \pf & 2  \po \sqrt{\frac{2 }{\lambda}}Z_{i+1} \varepsilon_i^2 \alpha^{i+1}\pf \times \po  \sqrt{\frac{ \lambda}{2}} \varepsilon_i \alpha^i\pf\\
 & & \\
 l_4 = R_{i} & 2 \po \varepsilon_i \alpha^{i} \pf \times \po \alpha^{i-1}\pf  & 2\po \alpha^{i}\pf \times \po \alpha^{i} \varepsilon_i^2\pf\\
 & & \\
 l_5 = R_{i+1} & 2\po \varepsilon_i^2 \alpha^{i}\pf\times\po\alpha^{i}\pf & 2\po\alpha^{i+1}\varepsilon_i^{\frac32}\pf \times \po\alpha^{i}\varepsilon_i^{\frac32}\pf\\
 & & \\
 \hline
\end{array}
\end{displaymath}
For instance (line 3 column 1) in \eqref{EqcalculPhiGeneral} appears the term $2 \alpha^{2i} \varepsilon_i^2 (C_{i-1} f)^T Z_{i+1} C_{i+1}f$, which is bounded below by $-\alpha^{2(i+1)} \varepsilon_i^4|C_{i+1} f|^2 - \alpha^{2(i-1)}\Lambda^2 |C_{i-1} f|^2$.

From the operation presented via the array, together with $\alpha,\alpha',\varepsilon_i\leq 1$, $\lambda \leq \frac{Z_i+Z_i^T}2 \leq \Lambda$, $|R_i f|^2 \leq m \sum_{j< i} |C_j  f|^2$ and  $|R_{i+1}\na f|^2 \leq m \po |C_{i}f|^2 + \sum_{j< i} |C_j  f|^2\pf$, we obtain
\begin{eqnarray*}
\frac{\po2 \Gamma_{\Phi_i} -  \partial_t \Phi_i\pf}{a''(f)}  & \geq &  -d_1 \alpha^{2(i-1)} \sum_{j< i} |C_j f|^2 + d_2\alpha^{2i} \varepsilon_i^2 |C_i f|^2 - d_3 \varepsilon_i^4 \alpha^{2(i+1)} |C_{i+1} f|^2
\end{eqnarray*}
with some $d_1,d_3$ and $d_2 = \lambda\po 2 - \frac12 - \frac12\pf + \underset{\varepsilon_i \rightarrow 0}{\mathcal O}(\varepsilon_i)$ where the negligible term is uniform with respect to $i \in \cco 1,N_c \ccf$, which concludes.
\end{proof}

\begin{proof}[Proof of Theorem \ref{TheoGeneralEllipticite}]
Keep the notations of Lemma \ref{LemGeneralCalculPhi}, and let $\varepsilon_{N_c} = \frac12 \min\po \varepsilon_*, \frac{b_2}{2b_1 + b_3}\pf$, $\varepsilon_{i-1} = \varepsilon_{i}^3$ for $i\leq N_c$, $\lambda_{0} = 1$, and $\lambda_{i+1} = \varepsilon_i^3\lambda_i$ for $i\geq 0$. Note in particular that 
\[\sum_{j>i} \lambda_j \leq \lambda_{i+1} \sum_{k\geq 0} \frac1{2^k} = 2 \varepsilon_i^3 \lambda_{i}.\]
 Let
\begin{eqnarray*}
\Phi_{(t)} f & = & \sum_{i=0}^{N_c} \lambda_i \Phi_i f,
\end{eqnarray*}
so that
\begin{eqnarray*}
\frac{2\Gamma_{\Phi_{(t)}} - \partial_t \Phi_{(t)}}{a''(f)} & \geq & \sum_{i=0}^{N_c} \alpha^{2i} |C_i  f|^2 \po b_2 \lambda_i \varepsilon_i^2 - b_1 \po \sum_{j>i} \lambda_j \pf - b_3 \lambda_{i-1} \varepsilon_{i-1}^4 \pf\\
& \geq & \sum_{i=0}^{N_c} \alpha^{2i} |C_i  f|^2 \lambda_i\po b_2 \varepsilon_i^2 - (2 b_1 + b_3)\varepsilon_i^3 \pf\\
& \geq & \frac{b_2}2 \sum_{i=0}^{N_c} \alpha^{2i} |C_i  f|^2 \lambda_i \varepsilon_i^2  \hspace{15pt}>0.
\end{eqnarray*}
It means $\psi(s) = P_s \Phi_{(t-s)} (P_{t-s} f)$ is increasing, and $\psi(t) \geq \psi(0)$ reads
\begin{eqnarray*}
 P_t a(f) - a(P_t f)  & \geq &  \varepsilon_0^2 \alpha(t) |C_0 P_t f|^2 + \frac1{\lambda_0 }\sum_{i\geq 1} \lambda_i \Phi_{i} (P_t f) \\
& \geq & \frac{1}{c}  \sum_{i=0}^{N_c} (1-e^{-t})^{2i+1}|C_i  P_t f|^2
\end{eqnarray*}
for some $c>0$.
\end{proof}

\begin{proof}[Proof of Theorem \ref{TheorGeneralCoercivite}]
With $\Phi_{(t)}$ defined above and the new assumptions,
\begin{eqnarray*}
\frac{2\Gamma_{\Phi_{(t)}} - \partial_t \Phi_{(t)}}{a''(f)}  & \geq & \rho_1 (1-e^{-t})^{2N_c}   \po |\na  f|^2 + \sum_{i=0}^{N_c} |C_i   f|^2 \pf
\end{eqnarray*}
for some $\rho_1>0$. It means, writing $W(t) =  \frac1{\lambda_0} \int \Phi_{(t)} (P_t f) \dd \mu - a\po \int f\dd\mu\pf$, that
\begin{eqnarray*}
W'(t) & \leq & -  \frac{\rho_1}{\lambda_0}  (1-e^{-t})^{2N_c}  \int  a''(P_t f) \po|\na P_t f|^2   + \sum_{i=0}^{N_c} |C_i  P_t f|^2 \pf \dd \mu\\
& \leq & - \rho_2 (1-e^{-t})^{2N_c} W(t) 
\end{eqnarray*}
for some $\rho_2$ thanks to Inequality \eqref{EqThoerGeneCoer}. Therefore
\begin{eqnarray*}
\text{Ent}_\mu^a (P_t f) & \leq &  W(t) \\
& \leq & e^{-\rho_2\int_0^t (1-e^{-s})^{2N_c} \dd s } W_0\\
& = & e^{-\rho_2\int_0^t (1-e^{-s})^{2N_c} \dd s } \text{Ent}_\mu^a  f.
\end{eqnarray*}
A priori the result holds for $f\in\A_p$, but it does not depend on the regularity of $f$ nor on its positive bound and we conclude by a density argument (indeed Inequality \eqref{EqThoerGeneCoer} implies $\mu$ satisfies a Poincar\'e inequality, which is the case $a(f) = f^2$, which in turn implies $\mu$ admits a finite exponential moment). 
\end{proof}

\textbf{Remark:} a thorough reading of the proofs show that, if $\rho,\lambda \leq 1 \leq \Lambda,K,m$ (which can always be assumed), one can choose $\varepsilon_* = \frac{\lambda}{10mN_c}$, $b_1 = 7 \po N_c^2 +\frac{\Lambda^2}{\lambda} + m\pf$, $b_2 =\frac{\lambda}{2}$ and $b_3 = \frac{3\Lambda^2}{\lambda}$ in Lemma \ref{LemGeneralCalculPhi}, and the constants $c$ and $\kappa$ respectively in Theorems \ref{TheoGeneralEllipticite} and \ref{TheorGeneralCoercivite} may be chosen has
\begin{eqnarray*}
c_* & =& \po \frac{100}{\lambda}\po N_c^2 + \frac{\Lambda^2}{\lambda}+m\pf \pf^{20 N_c^2}\\
\kappa_* &=& \frac{\rho}{c_*K}
\end{eqnarray*}
For instance, consider a family $(L_\varepsilon)_{\varepsilon>0}$ of generators such that Theorems \ref{TheoGeneralEllipticite} and \ref{TheorGeneralCoercivite} holds with $N_c$ uniform w.r.t $\varepsilon$, constants $\Lambda_\varepsilon,m_\varepsilon,\lambda_\varepsilon^{-1},\rho_\varepsilon^{-1}$ that grow at most polynomially and $\varepsilon \ln K_\varepsilon \rightarrow E > 0$ as $\varepsilon \rightarrow 0$,  (which is typically the case in a metastable context). Then $\varepsilon \ln \kappa_\varepsilon \rightarrow -E$, or in other words the speed of convergence for small $\varepsilon$ (in a large deviation scaling) is given by the degree of metastability, just as in the classical case (see \cite{MonmarcheRecuitHypo} about this).

\subsection*{Application to Ornstein-Uhlenbeck processes}

\begin{cor}\label{CorOU}
 Let
\begin{eqnarray*}
Lf(x) & = & -(Bx). \na f + \text{div}\po D \na f\pf
\end{eqnarray*}
be the generator of an Ornstein-Uhlenbeck process. Suppose ker$D$ does not contain any non trivial subspace which is invariant by $B^T$, and let $M$ be the number of Lie brackets necessary to fulfill H\"ormander's condition. Suppose $\rho = \inf \{\Re(\lambda),\ \lambda\in\sigma(B)\}>0$, and let $N$ be the maximal dimension of a Jordan block of $B$ corresponding to an eigenvalue in $\{\lambda\in\sigma(B),\ \Re(\lambda) =\rho\}$.

Then the process admits a unique invariant measure $\mu$, and 
 there exist constants $c,\kappa>0$ such that for all admissible function $a$ and all $f$ with $\text{Ent}_\mu^a f < \infty$,
 \begin{eqnarray*}
 \text{\emph{Ent}}_\mu^a(P_t f) & \leq & \min\po c(1+t^{2(N-1)}) e^{-2\rho t}\ ,\ e^{-\kappa t(1-e^{-t})^{2 M }}\pf \text{\emph{Ent}}_\mu^a f
 \end{eqnarray*}
\end{cor}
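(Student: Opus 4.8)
The strategy is to derive the two bounds separately and then take the minimum. The exponential-with-polynomial bound will come from the generalized $\Gamma$-calculus machinery of Section~\ref{SectionGamma}, while the $(1-e^{-t})^{2M}$-type bound is exactly the conclusion of Theorem~\ref{TheorGeneralCoercivite}, so the real content is setting up the hypotheses of that theorem and constructing a good family of operators $(C_i)$ for the Jordan-block estimate.

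\emph{Step 1: existence and uniqueness of $\mu$, and the log-Sobolev inequality for $\mu$.} Since $\ker D$ contains no non-trivial $B^T$-invariant subspace, the process is hypoelliptic (H\"ormander) and a Kalman-type controllability condition holds; under $\rho>0$ the solution of the Lyapunov equation $B\Sigma + \Sigma B^T = 2D$ has a unique positive definite solution $\Sigma$, and $\mu = \mathcal N(0,\Sigma)$ is the unique invariant measure. Being Gaussian, $\mu$ satisfies the entropic inequality \eqref{EqThoerGeneCoer} for every admissible $a$ with a constant $K$ depending only on $\|\Sigma\|$ (via the Bakry--\'Emery criterion $\Gamma_2 \geq \Sigma^{-1} \geq \rho_0 >0$ for the reversible Gaussian diffusion, then the classical transfer from $\Gamma_2$-curvature to the $a$-entropy inequality recalled before Lemma~\ref{LemGammaEntropic}). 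This supplies assumption~(b) of Theorem~\ref{TheorGeneralCoercivite}.

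\emph{Step 2: the second bound via Theorem~\ref{TheorGeneralCoercivite}.} Here $L = B_0 + \sum B_j^2$ with $B_0 = -(Bx).\na$ and the $B_j$'s spanning $\mathrm{Im}\,Q$ where $D=Q^TQ$; all coefficients are constant (affine for $B_0$), so the very strong commutation assumption~(ii) of Theorem~\ref{TheoGeneralEllipticite} is exactly the constant-coefficient case the Remark after it flags as admissible. Taking $C_0$ to be the derivations along $\mathrm{Im}\,Q$ and iterating $[B_0,C_i]$ produces, since $[B_0, v.\na] = (B^Tv).\na$, the operators $C_{i+1}$ corresponding to $B^T \mathrm{Im}\,Q + \cdots$; the matrices $Z_i$ are just (block pieces of) $B^T$, so $\lambda\le \tfrac12(Z_i+Z_i^T)\le \Lambda$ reduces to a spectral bound on the symmetric part of $B$, and the chain closes after $N_c = M$ steps precisely because $M$ Lie brackets suffice for H\"ormander; condition~(a), $\sum_i C_i^TC_i \geq \rho$, then encodes that the iterated images span $\R^d$. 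Feeding this and Step~1 into \eqref{EqTheorCoerc} gives $\mathrm{Ent}_\mu^a(P_t f)\le e^{-\kappa t(1-e^{-t})^{2M}}\mathrm{Ent}_\mu^a f$.

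\emph{Step 3: the sharp long-time bound $c(1+t^{2(N-1)})e^{-2\rho t}$.} This is the part needing care, and I expect it to be the main obstacle. The idea is to apply the curvature-type Lemmas~\ref{LemGammaCurvature}/\ref{LemInegalitePoincare} to $\Phi(f) = a''(f)(\na f)^T M_t \na f$ with a \emph{time-dependent} positive matrix $M_t$ chosen adapted to the Jordan structure of $B$. By \eqref{EqGammadeDiffusion} (Example~8), $2\Gamma_{\Phi_{M}} \ge -2a''(f)(\na f)^T \tfrac12(MB^T+BM^T)\na f$ (since here $b(x)=-Bx$, $J_b = -B^T$, careful with the sign convention $J_b(x)=A^T$ for $b(x)=Ax$), so one wants $M_t$ solving a differential inequality $\dot M_t \le -2\,\mathrm{sym}(M_t B^T) - 2\rho M_t$ or similar while staying uniformly comparable to the identity on the time scale where $e^{-2\rho t}$ dominates. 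Decomposing $\R^d$ into the generalized eigenspaces of $B$, on a Jordan block of size $k\le N$ for an eigenvalue with real part $\rho$ one takes $M_t$ built from powers of $t$ (à la Arnold--Erb / the OU literature) so that $(\na P_t f)^T M_t \na P_t f$ controls $e^{-2\rho t}$ times the untwisted quantity up to a factor $(1+t^{2(N-1)})$; on the other generalized eigenspaces $\Re(\lambda)>\rho$ the decay is strictly faster and contributes only to the constant $c$. Combining, $a''(P_tf)|\na P_tf|^2 \le c(1+t^{2(N-1)})e^{-2\rho t}P_t(a''(f)|\na f|^2)$, and then Lemma~\ref{LemInegalitePoincare} (with $\Phi_1(f)=a(f)$, $\Phi_2(f)=a''(f)|\na f|^2$, $\gamma(t)=c(1+t^{2(N-1)})e^{-2\rho t}\in L^1$ since $\rho>0$) converts this into the stated decay of $\mathrm{Ent}_\mu^a$. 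The delicate points are: (i) verifying $M_t$ stays positive definite and that the scalar-field/matrix-field hypotheses hold so \eqref{EqGammadeDiffusion} applies with $a$ admissible; (ii) getting the power exactly $t^{2(N-1)}$ rather than a worse $t^{2(N-1)k}$ from crude Gronwall iteration — this requires the standard trick of rescaling the Jordan-block coordinates by powers of a small parameter so the off-diagonal nilpotent part is controlled; (iii) handling the integrability/density issues to pass from $f\in\A_p$ to general $f$ with finite $a$-entropy, exactly as in the proof of Theorem~\ref{TheorGeneralCoercivite}. Finally take the minimum of the two bounds.
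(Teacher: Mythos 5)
Your overall architecture matches the paper's: derive the two bounds separately (Step~2 is exactly Theorem~\ref{TheorGeneralCoercivite} with the $(C_i)$ built by iterating $[B_0,\cdot]$ on $Q\nabla$, and the constant-coefficient case is precisely the one the Remark after Theorem~\ref{TheorGeneralCoercivite} says is covered), then take the minimum. Your Step~2 is essentially verbatim the paper's. Steps~1 and~3, however, take genuinely different routes, and in Step~3 I think you are pointing at the wrong tool for the hard part.

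\emph{On Step~1.} You get the entropic inequality \eqref{EqThoerGeneCoer} by invoking the classical Bakry--\'Emery/Gaussian log-Sobolev inequality for the \emph{reversible} OU process with the same invariant Gaussian $\mu=\mathcal N(0,\Sigma)$, then transferring to $a$-entropies. That is valid. The paper instead stays inside its own framework: it first proves the gradient bound \eqref{EqOUtrouspectral}, then uses Theorem~\ref{TheorKuwada} (for $a(f)=f^2$) plus the Banach fixed point theorem on $(\mathcal P_2,\mathcal W_2)$ to get uniqueness of $\mu$ \emph{and} the pointwise ergodicity $P_tf(x)\to\mu f$ required by Lemma~\ref{LemInegalitePoincare}, and then Lemma~\ref{LemInegalitePoincare} itself yields \eqref{EqThoerGeneCoer}. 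Your route silently assumes ergodicity in the sense needed later; the paper actually proves it, and this is what lets the argument be self-contained. That said, the external argument does work and is not a gap.

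\emph{On Step~3.} Here is the one place where I think your plan would likely stall. You propose a time-dependent positive-definite $M_t$ on each Jordan block and, for what you flag as the delicate point (ii), you invoke ``the standard trick of rescaling the Jordan-block coordinates by powers of a small parameter so the off-diagonal nilpotent part is controlled.'' That rescaling is precisely the Arnold--Erb device, and it is known to give a rate $e^{-2(\rho-\varepsilon)t}$ with a constant blowing up as $\varepsilon\to0$; it does \emph{not} produce the sharp $c(1+t^{2(N-1)})e^{-2\rho t}$, which is the whole point of the Corollary. The paper's actual construction avoids this entirely: for a generalized eigenvector chain $u_1,\dots,u_m$ (with $u_{k-1}=(B-\lambda I)u_k$), it sets $v_t=\sum_{k=1}^m\frac{t^{m-k}}{(m-k)!}u_k$, so that $v_t'=(B-\lambda I)v_t$, i.e.\ $Bv_t=\lambda v_t+v_t'$. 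Taking $\Phi_t f=a''(f)\,|v_t\cdot\na f|^2$ (a rank-one, or rank-two if $\lambda\notin\R$, twist) and applying \eqref{EqGammadeDiffusion}, the two time derivatives cancel exactly and one gets the \emph{equality} $2\Gamma_{\Phi_t}f-\partial_t\Phi_tf\ge 2\Re(\lambda)\Phi_t f$; hence $a''(P_tf)|v_t\cdot\na P_tf|^2\le e^{-2\Re(\lambda)t}P_t\big(a''(f)|u_m\cdot\na f|^2\big)$. One then bounds $|u_m\cdot\na f|^2$ by $|v_t\cdot\na f|^2$ plus polynomially-weighted lower-order terms $|u_k\cdot\na f|^2$ and iterates over the chain, recovering exactly the power $t^{2(N-1)}$. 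No $\varepsilon$-loss and no Gronwall iteration enter. If you insist on a matrix $M_t$, the right choice is $M_t=\tfrac12(\bar v_tv_t^T+v_t\bar v_t^T)$ block by block, not a rescaled identity; and then you no longer need the delicate point (ii) at all. I would therefore replace the rescaling argument in (ii) by this $v_t$ construction before trusting Step~3.

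\emph{Finally}, the paper obtains the polynomial--exponential entropy bound by a one-line chain: $\mathrm{Ent}_\mu^a(P_tf)\le K\int a''(P_tf)|\na P_tf|^2\,d\mu$, then the gradient bound \eqref{EqOUtrouspectral} applied between times $1$ and $t$, then Theorem~\ref{TheoGeneralEllipticite} to get back to $\mathrm{Ent}_\mu^a f$. Your plan uses Lemma~\ref{LemInegalitePoincare} directly with $\gamma(t)=c(1+t^{2(N-1)})e^{-2\rho t}$; this gives the right quantity $\int a''(f)|\na f|^2\,d\mu$ on the right-hand side but not $\mathrm{Ent}_\mu^a f$, so you still need a step like the paper's regularization-then-reconstruction to convert the Fisher-type term back into entropy. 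That conversion is where Theorem~\ref{TheoGeneralEllipticite} is actually used in the paper, and you should flag it explicitly.
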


\begin{proof}
According to \cite[Lemma 2.3]{AntonErb}, the condition on ker$D$ and $B$ is equivalent to
\[\sum_{k=0}^{M} B^k D \po B^T\pf^k \geq r\]
for some $r>0$. Let $C_0 = (u_1.\na,\dots,u_d.\na) := Q\na$ where the $u_i$'s are the line of a matrix $Q$ such that $Q^TQ = D$. Then $|C_0 f|^2 = (\na f)^T Q^T Q \na f = \Gamma f$. For $i\in\cco 0, M -1 \ccf$, simply let $Z_{i+1}=1$, $R_{i+1} = 0$ and  by induction $C_{i+1} = [L,C_i] = [-(Bx).\na, C_i] = Q B^{i+1} \na $. Let $Z_{M+1} = 1$, $C_{M+1} = 0$ and $R_{M + 1} = Q B^{M+1} \na$, so that
\[| R_{M+1} f|^2\hspace{5pt} = \hspace{5pt} (\na f)^T (B^{M+1})^T D B^{M+1} \na f \hspace{5pt} \leq \hspace{5pt} |QB^{M+1}|^2 |\na f|^2 \hspace{5pt} \leq \hspace{5pt} \frac{|QB^{M+1}|^2}{r} \sum_{j=0}^{M}|C_j f|^2.\]
All the conditions of Theorem \ref{TheoGeneralEllipticite} are fulfilled.

 For $\lambda \in \sigma(B)$ let $u_m \in \text{ker}(B-\lambda I)^m \setminus \text{ker} (B-\lambda I)^{m-1}$ be a generalized eigenvalue of $B$ associated to $\lambda$ of order $m\geq 1$, and for $i\in\llbracket 1,m-1\rrbracket$ let $u_{m-i} = (B-\lambda I)u_{m-i+1}$. Let $v_t = \sum_{k=1}^m \frac{t^{m-k}}{(m-k)!}u_k$, and
\[\Phi_t f \hspace{7pt}= \hspace{7pt} a''(f)|v_t . \na f|^2\hspace{7pt} =\hspace{7pt}  a''(f)(\na f)^T\frac{ \bar v_tv_t^T +  v_t \bar v_t^T }{2}  \na f.\]
Since $B v_t = \lambda v_t + v'_t$, Inequality \eqref{EqGammadeDiffusion} reads
\begin{eqnarray*}
2\Gamma_{\Phi_t} f -\partial_t \Phi_t & \geq &a''(f)(\na f)^T \co  ( \bar v_tv_t^T +  v_t \bar v_t^T)B^T - \bar v_t(v_t')^T +  v_t( \bar v_t')^T   \cf \na f \\
& = &  2 \Re(\lambda) \Phi_t f.
\end{eqnarray*}
Writing $\psi(s) =  P_s \Phi_{t-s}(P_{t-s} f)$ it means $\psi'(s) \geq  2 \Re(\lambda)\psi(s)$ and so
\begin{eqnarray*}
a''(P_t f)|v_t . \na P_t f|^2 & \leq & e^{-2\Re(\lambda) t} P_t  \po a''(f)|u_m. \na f|^2\pf.
\end{eqnarray*}
By induction, $|u_m .\na f |^2 \leq |v_t .\na f|^2 + \sum_{k=1}^{m-1} \po \frac{t^{m-k}}{(m-k)!}\pf^2 |u_k.\na f|^2$ yields
\[a''(P_tf) |u_m .\na P_t f |^2 \leq c_m (1 + t^{2(m-1)})e^{-2\Re(\lambda) t} P_t \po a''(f) \sum_{k\leq m} |u_k.\na f|^2 \pf\]
 for some constant $c_m$. Finally, considering all generalized eigenspaces, it means there exists $\tilde  c>0$ such that 
\begin{eqnarray}\label{EqOUtrouspectral}
a''(P_t)|\na P_t f |^2 & \leq & \tilde  c(1+t^{2(N-1)}) e^{-2\rho t} P_t \po a''(f)|\na f|^2 \pf.
\end{eqnarray}
An explicit  non-degenerated Gaussian invariant measure $\mu$ of the semigroup can be determined (see \cite[Lemma 3.3]{AntonErb}). Theorem \ref{TheoGeneralEllipticite} implies the semigroup admits a smooth density with respect to the Lebesgue measure. Together with the contraction \eqref{EqOUtrouspectral} for $a(f)= f^2$ and Theorem \ref{TheorKuwada}, it means for $t$ large enough $P_t$  is a contraction of the Wasserstein space $\mathcal W_2$ which is complete (\cite{BolleyWasserstein}). The fixed point Theorem ensures that $\mu$ is in fact the unique invariant law and that for all initial law $\nu$, $(\nu P_t) \overset{\mathcal W_2}\rightarrow\mu$ which mean by the Kantorovitch-Rubinstein duality that $P_t f(x) \rightarrow \mu f$ for all $x\in\R^d$ at least for any Lipschitz $f$, but then for all $f\in\A_p$ by a density argument since $\mu$ has a finite exponential moment.  Therefore we may apply Lemma \ref{LemInegalitePoincare} with $\Phi_1 f = a(f)$ and $\Phi_2 = a''(f) |\na f|^2$ to obtain that
\begin{eqnarray*}
\forall f\in\A_p\hspace{30pt}\text{Ent}_\mu^a f & \leq & K \int a''(f) |\na f|^2 \dd \mu
\end{eqnarray*} 
holds for some $K>0$. All the assumptions of Theorem \ref{TheorGeneralCoercivite} holds, and thus there exists $\kappa >0$ such that for all $f\in\A_p$
\begin{eqnarray*}
\text{Ent}_\mu^a (P_t f) & \leq &  e^{-\kappa t(1-e^{-t})^{2M }} \text{Ent}_\mu^a f
\end{eqnarray*}
On the other hand
\begin{eqnarray*}
\text{Ent}_\mu^a (P_t f) & \leq   & K \int a''(P_t f) |\na P_t f|^2 \dd \mu \\
& \leq & K e^\rho \tilde  c(1+t^{2(N-1)}) e^{-2\rho t} \int a''(P_1 f) |\na P_1 f|^2 \dd \mu \\
& \leq & c (1+t^{2(N-1)}) e^{-2\rho t}\text{Ent}_\mu^a f
\end{eqnarray*}
for some $c>0$, where the last line is due to Theorem \ref{TheoGeneralEllipticite}.
\end{proof}

\textbf{Remark:} It means the short and long time behaviours of the "distance" to equilibrium
\[d_a(P_t,\mu) := \sup\left\{ \text{Ent}_\mu^a(P_t f),\ \text{Ent}_\mu^a f = 1\right\}\]
are at least or order
\[1-d_a(P_t,\mu)  \underset{t\rightarrow 0}\sim   t^{2M+1}\hspace{45pt}d_a(P_t,\mu) \underset{t\rightarrow \infty}\sim t^{2(N-1)} e^{-2\rho t}.\]
An interpretation would be the following: at small times the law of the process instantaneously approach its equilibrium by local smoothing through diffusion, and then long-range averaging is essentially due to the drift part of the dynamics.

 The dependency in $\rho$, $N$ and $M$ is optimal: indeed, consider the kinetic Fokker-Planck process with generator
\begin{eqnarray*}
L f(x,y) & = & \po y\x - \po \frac14 x + y\pf \y + \y^2 \pf f(x,y)\\
& = & \begin{pmatrix}
x & y
\end{pmatrix}\begin{pmatrix}
0 & - \frac14 \\ 1 & -1
\end{pmatrix} \na f + \text{div} \po \begin{pmatrix}
0 & 0  \\ 0 & 1
\end{pmatrix} \na f\pf,
\end{eqnarray*}
so that $\rho = \frac12$, $N=2$ and $M=1$. 
Gadat and Miclo computed in \cite[Theorem 3]{Gadat2013} the explicit value of the (squared) operator norm of $P_t - \gamma$, namely $d_a(P_t,\mu)$ with $a(f) = f^2$, which is
\begin{eqnarray*}
\| P_t - \mu \|^2 & = & \po 1 + \frac{t^2}{2} + t\sqrt{1 + \po\frac{t}{2}\pf^2 }\pf e^{-t}\\
& \simeq & 1 - \frac{t^3}{12}\hspace{20pt}\text{as }t\rightarrow 0\\
& \simeq  & t^2 e^{-t}\hspace{30pt}\text{as }t\rightarrow\infty.
\end{eqnarray*}
In fact, as far as the long-time behaviour is concerned, this is a general fact:

\begin{prop}\label{PropOU}
In the settings of Corollary \ref{CorOU}, let $\| P_t - \mu \| = \sup\{\| P_t f - \mu f\|_{L^2(\mu)},\ \| f \|_{L^2(\mu)}=1\}$ be the $L^2$ operator norm of $P_t - \mu$. Then there exists $c>0$ such that 
\[\frac1c (1+t^{2(N-1)}) e^{-2\rho t} \hspace{7pt} \leq \hspace{7pt} \| P_t - \mu \|^2 \hspace{7pt} \leq \hspace{7pt} c (1+t^{2(N-1)}) e^{-2\rho t}\]
\end{prop}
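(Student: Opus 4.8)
The plan is to get the upper bound directly from Corollary \ref{CorOU}, and the lower bound by restricting $P_t-\mu$ to the finite-dimensional, $P_t$-invariant subspace of linear functions, where it reduces to the size of a matrix exponential. For the upper bound: $a(f)=f^2$ is admissible (here $1/a''\equiv 1/2$, which is positive and concave) and $\text{Ent}_\mu^a f=\text{Var}_\mu f$; since $P_t$ preserves $\mu$-averages one has $(P_t-\mu)f=(P_t-\mu)(f-\mu f)$ with $\|f-\mu f\|_{L^2(\mu)}\le\|f\|_{L^2(\mu)}$, so
\[\|P_t-\mu\|^2=\sup\left\{\text{Var}_\mu(P_t f):\text{Var}_\mu f=1\right\}\ \le\ c\po 1+t^{2(N-1)}\pf e^{-2\rho t}\]
by Corollary \ref{CorOU} applied with $a(f)=f^2$ (keeping only the first term in the $\min$). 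That settles the right-hand inequality, with no further work.

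For the lower bound I would set $V=\{x\mapsto v.x:v\in\R^d\}\subset L^2(\mu)$; these functions are $\mu$-centered because $\mu$ is a centered Gaussian. From the explicit solution $X_t=e^{-tB}X_0+G_t$ of the associated stochastic differential equation, with $G_t$ a centered Gaussian independent of $X_0$, one gets $P_t(v.x)=(e^{-tB^T}v).x$; hence $V$ is $P_t$-invariant and, in the coordinate $v$, $(P_t-\mu)|_V$ is the matrix $e^{-tB^T}$. On $V$ the $L^2(\mu)$ scalar product is $\langle v.x,w.x\rangle=v^T\Sigma w$ with $\Sigma=\int xx^T\dd\mu$ symmetric positive definite ($\mu$ is non-degenerate, see \cite[Lemma 3.3]{AntonErb}). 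Writing $u=\Sigma^{1/2}v$ one checks
\[\|P_t-\mu\|^2\ \ge\ \left\|(P_t-\mu)|_V\right\|_{L^2(\mu)}^2\ =\ \left\|e^{-t\widehat B}\right\|^2,\qquad \widehat B:=\Sigma^{-1/2}B\Sigma^{1/2},\]
where $\|\cdot\|$ on the right is the Euclidean operator norm and $\widehat B$ is similar to $B$.

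It then remains to prove $\|e^{-t\widehat B}\|^2\ge\frac1c\po 1+t^{2(N-1)}\pf e^{-2\rho t}$ for all $t>0$. Since $\widehat B$ is similar to $B$, $\|e^{-t\widehat B}\|$ and $\|e^{-tB}\|$ differ by at most a fixed factor; putting $B$ in Jordan form, $e^{-tB}$ becomes block-diagonal with blocks $e^{-\lambda t}\sum_{k=0}^{m-1}\frac{(-t)^k}{k!}J_0^k$ ($J_0$ the nilpotent shift), and the top-right entry of a block of size $N$ attached to some $\lambda$ with $\Re\lambda=\rho$ has modulus $\frac{t^{N-1}}{(N-1)!}e^{-\rho t}$, so $\|e^{-tB}\|\ge\frac{t^{N-1}}{(N-1)!}e^{-\rho t}$; moreover $\|e^{-tB}\|$ is at least the spectral radius $e^{-\rho t}$ of $e^{-tB}$. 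Taking the maximum of these two bounds gives $\|e^{-tB}\|^2\ge\frac{1}{2((N-1)!)^2}\po 1+t^{2(N-1)}\pf e^{-2\rho t}$ for all $t>0$, and a harmless adjustment of the constant for the similarity yields the lower bound. I expect the only mildly delicate points, hence the main obstacle, to lie entirely in this lower bound: the identification $\|(P_t-\mu)|_V\|=\|e^{-t\widehat B}\|$ (this is where non-degeneracy of $\mu$ is used), and producing the exponent $2(N-1)$ together with the factor $1+$ uniformly on $(0,\infty)$ rather than merely asymptotically — handled by combining the Jordan-block estimate (sharp for large $t$) with the crude bound $\|e^{-tB}\|\ge e^{-\rho t}$ (enough for bounded $t$). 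Note that, in contrast with Corollary \ref{CorOU}, this half uses no hypoellipticity and only one side of the asymptotics of $\|e^{-tB}\|$.
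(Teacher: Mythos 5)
Your proof is correct and takes essentially the same route as the paper's: both use the fact that $P_t$ acts on the $P_t$-invariant space $V$ of linear (hence $\mu$-centered) functions by $v.x\mapsto(e^{-tB^T}v).x$, and extract the $t^{N-1}$ growth from a maximal Jordan block of $B$ at the spectral abscissa. The only difference is in packaging: the paper fixes one generalized eigenvector $u$ and bounds the Gaussian integral $\int |x^T e^{-tB^T}u|^2\,\dd\mu$ directly from below, while you pass through the covariance $\Sigma$ to reduce $\|(P_t-\mu)|_V\|$ to the Euclidean operator norm of a matrix similar to $e^{-tB}$; your way of combining the Jordan-entry estimate (good for large $t$) with the spectral-radius bound $\|e^{-tB}\|\geq e^{-\rho t}$ (good for small $t$) is a tidy way to get the ``$1+$'' uniformly over $(0,\infty)$, which the paper handles somewhat more loosely.
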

\begin{proof}
Since $\| P_t f - \mu f \|^2_{L^2(\mu)} = \text{Ent}_\mu^a(P_t f)$ with $a(f) = f^2$, the upper bound is given by Corollary \ref{CorOU}. For the lower bound it is sufficient to exhibit a particular function $f$ such that this holds. For $u\in\R^d$, consider the linear function $f(x) = u^T x$ and let $u_t = e^{-tB^T}u$. Then  $\na (u_t^T x) = u_t$ and div$\po D \na (u_ t^T x)\pf = 0$ for all $x\in\R^d,t\geq 0$, so that $P_t f(x) = u_t^T x$ is explicit. In particular $\mu f = \underset{t\rightarrow\infty}\lim P_t f(x) = 0$, and 
$\| (P_t - \mu) f\|^2 = \int | x^Te^{-tB^T} u |^2 \mu(\dd x)$. If $u$ is a generalized eigenvector of $B^T$ or order $N$ associated to $\lambda \in \sigma(B^T)$ with $\Re(\lambda) = \rho$ and if $u_{i} = (B^T - \lambda I)^i u$ for $i\in\llbracket 0,N\rrbracket$ then $u_t = e^{-\rho t}\sum_{k=0}^{N-1} \frac{t^k}{k!} u_k$. At large time the leading term is $t^{N-1} u_{N-1}$, and so
\begin{eqnarray*}
\int | x^Te^{-tB^T} u |^2 \mu(\dd x) & \geq & \int | x^Te^{-tB^T} u |^2 \mathbb 1_{\left\{\frac{|x^Tu_{N-1}|}{|x||u_{N-1}|}\geq \frac12\right\}}\mu(\dd x)  \\
& \geq &  \gamma (1+t^{2N})e^{-2\rho t} \int |x|^2  \mathbb 1_{\left\{\frac{|x^Tu_{N-1}|}{|x||u_{N-1}|}\geq \frac12\right\}} \mu(\dd x)
\end{eqnarray*}
for some $\gamma>0$, which concludes.
\end{proof}

\textbf{Remark:} Note that the proof of Inequality \eqref{EqOUtrouspectral} does not use the positivity of $\rho$, and therefore it holds for $\rho \leq 0$. Of course in this case the process is not ergodic, but we still get a local inequality
\[a(P_t f) - P_t a(f) \leq \po \tilde  c\int_0^t (1+s^{2(N-1)}) e^{-2\rho s} \dd s\pf P_t \po a''(f)|\na f |^2\pf\]
and, in the hypoelliptic case, the Wasserstein bound
\[\mathcal{W}_2^2\po \nu_1 P_t,\nu_2 P_t\pf \leq \tilde  c (1+t^{2(N-1)}) e^{-2\rho t}\mathcal{W}_2^2\po \nu_1 ,\nu_2 \pf.\]

\section{Interacting particles on a graph}\label{SectionGraph}

\subsection{Settings}

Let $G=\{0,\dots,N\}$ be a finite non-oriented irreducible graph with edges $E\subset G^2$. We write $i\sim j$ when two vertices $i$ and $j$ are neighbours, namely when $(i,j)\in E$ (and so $(j,i)\in E$). For all $(i,j)\in E$, let $W_{i,j}=W_{j,i} \in\mathcal C^{\infty}(\R^d)$ be an even function whose all derivatives grow at most polynomially at infinity. We suppose $W_{i,j}$ is strongly convex, which means there exists $\lambda_{i,j}>0$ such that  $y^T\na^2 W_{i,j}(x)y \geq \lambda_{i,j}| y|^2$ for all $x,y\in\R^d$ where $\na^2 W_{i,j}$ is the Hessian matrix of $W_{i,j}$. We call $W_{i,j}$ the interaction potential between the sites $i$ and $j$. 

We sum up the assumptions that implicitly holds throughout Section \ref{SectionGraph}:

\begin{hyp*}\

\begin{enumerate}[(A)]
\item The graph $G$ is finite, irreducible, non-oriented.
\item For all $i\sim j$, $W_{i,j}=W_{j,i}$ is a smooth, even, $\lambda_{i,j}$-strictly convex potential with $\lambda_{i,j}>0$ whose all derivatives grow at most polynomially at infinity, and such that $|\na^2 W_{i,j}| \leqslant c_{i,j} W_{i,j}$ for some $c_{i,j}>0$. For $i\nsim j$, set $W_{i,j}=0$.
\end{enumerate}
\end{hyp*}

We are interested in $\X(t) = (X_i(t))_{i\in G} \in \R^{dN}$, a diffusion indexed by $G$. At time $t$, the particle at site $i$ undergoes an attracting force $-\na W_{i,j}\po X_i(t) - X_j(t)\pf$ from the particle at site $j$. Moreover, depending on their site, the particles undergo infinitesimal collisions from thermal motion (see \cite{EckPRB1999} for more considerations on the model). The Hamiltonian dynamics should be 
\[\forall i\in G,\hspace{15pt}\left\{ \begin{array}{rcl}
\dd X_i(t) & = & Y_i(t) \dd t\\
& & \\
m_i \dd Y_i(t) & = & - \underset{i\sim j}\sum \na W_{i,j}\po X_i(t) - X_j(t)\pf \dd t - \nu_i Y_i(t) \dd t + \overset{\infty}{\underset{j=1}\sum} \sigma_{i,j} \dd B_j(t)
\end{array}\right.\]
where $Y_i(t) \in \R^d$ is the velocity of the particle $i$, $m_i>0$ is its mass, $\nu_i>0$ a friction coefficient, $\textbf{B}(t) = \po B_j(t)\pf_{j\geq1}$ is a sequence of independent standard $1$-dimensional Brownian motions and for all $i\in G$,  $(\sigma_{i,j})_{j\geq 1}\in l^2(\R^d)$.

Following the ideas of Arnold and Erb \cite[Section 7]{AntonErb}, we could tackle this Hamiltonian process in the case where for all $(i,j)\in E$, $W_{i,j}$ is a perturbation of a quadratic potential. Nevertheless, in a first instance, we will rather consider a simpler case without any restriction on $W_{i,j}$ other than strong convexity. Taking $\nu_i = 1$ and letting the masses $m_i$ go to zero, we obtain the overdamped dynamics
\begin{eqnarray}\label{EqOverdampedParticules}
\forall i\in G,\hspace{15pt}\dd X_i(t) = - \underset{i\sim j}\sum \na W_{i,j}\po X_i(t) - X_j(t)\pf \dd t   + \overset{\infty}{\underset{j=1}\sum} \sigma_{i,j} \dd B_j(t)
\end{eqnarray}
For $x=(x_1,\dots,x_N) \in \R^{dN}$ and $i\in G$, let $\textbf{W}(x) = \sum_{k \in G}\sum_{j\sim k} W_{k,j}(x_k-x_j)$, \[b_i(x) = \sum_{i\sim j}\na W_{i,j}\po x_i - x_j\pf = \frac12\na_i \textbf{W}(x)\]
 and $b = (b_0,\dots,b_N) = \frac12 \na \textbf{W}$; let $ \sigma_j = (\sigma_{0,j},\dots,\sigma_{N,j})$ and $S = \frac12 \sum_{j\geq 1}  \sigma_j \sigma_j^T$. Then the generator associated to \eqref{EqOverdampedParticules} is
\begin{eqnarray}\label{EqGenerateurParticules}
L f(x) & = & -b(x).\na f(x) + \text{div}\po S \na f \pf.
\end{eqnarray}

Let $\overline X(t) = \frac1{N+1} \sum_{i\in G} X_i(t)$. Since $W_{i,j}=W_{j,i}$ is an even function,
\begin{eqnarray*}
\dd \overline X(t)& =& \frac{1}{N+1}\sum_{j\geq 1} \po \sum_{i\in G}\sigma_{i,j}\pf \dd B_j(t)\\
\Rightarrow\hspace{15pt} \overline X(t) & =& \overline X(0) + \frac{1}{N+1}\sum_{j\geq 1} \po \sum_{i\in G}\sigma_{i,j}\pf  B_j(t)
\end{eqnarray*}
If $\sum_{i\in G}\sigma_{i,j} = 0$ for all $j$ the process $\X$ is not hypoelliptic, since $\overline X_t$ is constant.
On the other hand if $\sum_{i\in G}\sigma_{i,j} \neq 0$ for some $j$,  $\X$ is not recurrent since $\overline X_t$ is not. There are several natural ways to force the recurrence:
\begin{enumerate}
\item We can suppose that some particles, rather than following \eqref{EqOverdampedParticules}, are fixed at the origin. In this case we can always merge these particles in order to consider only $X_0$ is fixed. We call this \emph{the fixed problem associated to \eqref{EqGenerateurParticules}}, and we call $\tX = (\widetilde X_1,\dots,\widetilde X_n)$ the associated process (more generally, if $x=(x_0,\dots,x_N)\in\R^{(N+1)d}$, we note $\tilde x = (x_1,\dots,x_N ) $). The generator $\widetilde L$ of $\tX$ is obtained from \eqref{EqGenerateurParticules} by replacing the drift $b(x)$ by $\tilde b(0,\tilde x)$ and the matrix $S = (s_{i,j})_{0\leq i,j\leq N}$ by $\widetilde S= (s_{i,j})_{1\leq i,j\leq N}$ (where we decompose $S$ as a $(N+1)\times(N+1)$ square of $d\times d$ blocks). 
\item We can add a coercive force $-\na U_i(X_i(t))$ with a strongly convex $U$ to the dynamics of some particles. In this case we can add a particle $X_{-1}(t)$ which is fixed at zero and consider that $U_i(X_i(t)) = W_{i,-1}(X_i(t) - X_{-1}(t))$, so that this case is equivalent to the previous one.
\item We can observe the cloud of particles from its center of mass, meaning that $X_i(t)$ is replaced by $X_i(t) - \overline X(t)$. We call this \emph{the centered problem associated to \eqref{EqGenerateurParticules}}, and $\hX = \X - \overline X = \po X_i - \overline X  \pf_{i=0..N}$ (more generally, if $x\in\R^d$, we note $\bar x= \frac{1}{N+1} \sum x_i$ and $\hat x = x - \bar x$). The generator $\hat L$ of the centered process is obtained from \eqref{EqGenerateurParticules} by replacing the diffusion vectors $\sigma_{j}$ by $\hat \sigma_{j} $. This process can never be hypoelliptic, and thus we may also consider $\hX  + Z $ where $Z$ is a standard Ornstein-Uhlenbeck process on $\R^d$ satisfying $\dd Z = -Z \dd t + \dd B'(t)$ where $B'$ is a standard Brownian motion on $\R^d$ which is independent from $\textbf{B}$. Note that the dynamics \eqref{EqGenerateurParticules} is invariant by translation of the center of mass: if $f$ is invariant by translation (i.e. $f(x_1+h,\dots, x_N+h) = f(x_1,\dots,x_N)\ \forall h\in\R^d$) then
\[\mathbb E\po f\po \textbf{X}(t)\pf\pf = \mathbb E\po f\po \hX (t)\pf\pf = \mathbb E\po f\po \hX + Z(t)\pf\pf.\]
\end{enumerate}



Let $\A$ be the space of $\mathcal{C}^\infty$ functions on $\R^{(N+1)d}$ whose all derivatives grow at most polynomially at infinity, $\A_0  =  \left\{f\in\A,\ \na_{x_0} f= 0\right\}$ which may be seen as a set of functions on $\R^{dN}$, and
\begin{eqnarray*}
\A_c & = & \left\{f\in \A,\ f(x+h) = f(x)\ \forall x\in \R^{(N+1)d},\ h\in \R^d\right\} \hspace{7pt}=\hspace{7pt}\left\{f\in \A,\ \sum_0^{N} \na_{x_i} f = 0\right\}
\end{eqnarray*}

\begin{prop}
For any initial law, the process $\X$ (resp. $\tX$) is well-defined for all times. The associated semi-group $P_t f(x) = \mathbb E\po f(\X (t))\ |\ \X(0) = x \pf$ (resp. $\widetilde P_t$) is Feller and fix $A_c$ (resp. $A_0$). 
\end{prop}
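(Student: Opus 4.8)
The plan is to check, in turn: local solvability, non-explosion, the Feller property together with the stability of the test space $\A$, and the invariance of $\A_c$ (resp. $\A_0$). Since $b=\frac12\na\textbf{W}$ is $\mathcal{C}^\infty$ it is locally Lipschitz, and the diffusion coefficient (built from the fixed $\ell^2$ data $(\sigma_{i,j})_j$) is constant, so \eqref{EqOverdampedParticules} has, from any starting point, a unique strong solution up to an explosion time $\tau_\infty\in(0,\infty]$. To rule out explosion I would use the convexity built into Assumption~(B): $\textbf{W}$, being a finite sum of convex potentials precomposed with linear maps, is convex, so $b$ is monotone, $\po b(x)-b(y)\pf^T(x-y)\geq0$; and since every $W_{i,j}$ is even, $\na\textbf{W}(0)=0$, i.e. $b(0)=0$, whence $x^Tb(x)\geq0$. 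For $V(x)=|x|^2$ this gives $LV=-2x^Tb(x)+2\,\mathrm{tr}(S)\leq2\,\mathrm{tr}(S)$, and the usual localisation along $\tau_n=\inf\{t:|\X(t)|\geq n\}$ yields $\E\po|\X(t\wedge\tau_n)|^2\pf\leq|x|^2+2\,\mathrm{tr}(S)t$, so $\mathbb{P}(\tau_n\leq t)\to0$ and $\tau_\infty=\infty$ a.s.; averaging over the initial law then handles the general case. For $\tX$ the drift is $\frac12\na_{\tilde x}\po\textbf{W}(0,\tilde x)\pf$, still the gradient of a convex even function vanishing at $0$, so the same argument applies.

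Next, smooth coefficients and non-explosion provide a $\mathcal{C}^\infty$ stochastic flow $\po\varphi_t\pf_{t\geq0}$, with $\X(t)=\varphi_t\po\X(0)\pf$ and $P_tf(x)=\E\po f\po\varphi_t(x)\pf\pf$; continuity of $x\mapsto\varphi_t(x)$ and dominated convergence give $P_t\colon C_b\to C_b$, and path-continuity of $\X$ gives $P_tf(x)\to f(x)$ as $t\to0$, which is the Feller property. To see that $P_t$ preserves $\A$, I would differentiate $f\po\varphi_t(x)\pf$ under the expectation: the first tangent map solves $\partial_t D\varphi_t(x)=-J_b\po\varphi_t(x)\pf D\varphi_t(x)$ with $J_b=\frac12\na^2\textbf{W}\geq0$ — the same sign condition that yields $\Gamma_{|\na\cdot|^2}\geq0$ in the spirit of Example~5 and Lemma~\ref{LemGammaCurvature} — so $|D\varphi_t(x)|\leq1$, while the higher tangent maps solve inhomogeneous linear equations whose source terms are products of the polynomially growing derivatives $\na^k\textbf{W}\po\varphi_t(x)\pf$ with lower-order tangent maps; Gronwall's lemma together with the polynomial moment bounds $\E\po|\X(t)|^{2m}\pf\leq C_m(t)(1+|x|^2)^m$ (obtained from the monotonicity of $b$ via the Lyapunov functions $(1+|x|^2)^m$) then bound every space derivative of $\varphi_t$, locally uniformly in $t$, by a polynomial in $|x|$. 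Hence $P_tf\in\A$ for $f\in\A$, and run on $\widetilde L$ the same argument shows $\widetilde P_t$ fixes $\A_0$, which is exactly the space of smooth, polynomially controlled functions of $(x_1,\dots,x_N)$. I expect this last step — the polynomial control of all derivatives of $P_tf$, where the a priori possibly growing tangent flows must be kept in check — to be the only genuinely technical point; it is precisely the convexity of $\textbf{W}$ (delivering both $J_b\geq0$ and the moment bounds) that makes it work.

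Finally, $b$ depends on the $x_i$ only through their differences, hence is invariant under the diagonal translations $x\mapsto x+(h,\dots,h)$, $h\in\R^d$; by pathwise uniqueness $\varphi_t\po x+(h,\dots,h)\pf=\varphi_t(x)+(h,\dots,h)$ a.s., so for $f\in\A_c$ one gets $P_tf\po x+(h,\dots,h)\pf=\E\po f\po\varphi_t(x)+(h,\dots,h)\pf\pf=P_tf(x)$. Together with $P_tf\in\A$ this gives $P_tf\in\A_c$; equivalently, $L$ commutes with the diagonal translations and therefore preserves the subspace $\A_c=\{\sum_i\na_{x_i}f=0\}$, and the translation part of the argument is the comparatively soft one.
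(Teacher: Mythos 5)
Your argument is correct, but the route differs from the paper's in both of the non-trivial steps. For non-explosion, the paper takes $\phi(x)=1+\textbf{W}(x)+(\bar x)^2$ and invokes a cited criterion from Durrett, whereas you use the more elementary Lyapunov function $V(x)=|x|^2$: the observation that evenness of the $W_{i,j}$ gives $b(0)=0$, hence (by monotonicity of $\na\textbf{W}$) $x^Tb(x)\geq0$ and $LV\leq 2\,\mathrm{tr}(S)$, is a clean alternative. For the preservation of $\A$, the paper works at the PDE level: it differentiates the Kolmogorov equation and writes the Duhamel formula $\partial_x^n P_t f=P_t\partial_x^n f+\int_0^t P_{t-s}[\partial^n,L]P_sf\,\dd s$, then bounds the commutator $[\partial^n,L]$ by a polynomial times lower-order derivatives and closes by induction, using the moment estimate for the process to keep polynomial control. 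You instead work at the pathwise level with the smooth stochastic flow and its tangent maps: $J_b=\tfrac12\na^2\textbf{W}\geq0$ gives the contraction $|D\varphi_t|\leq1$ (the same curvature sign that feeds Proposition~\ref{PropGraph}), and the higher tangent maps are handled by Gronwall plus moment bounds. Both arguments exploit convexity in the same two places (monotonicity of $b$ for non-explosion and moment bounds; positivity of $J_b$ for derivative control), so they are morally equivalent; the paper's Duhamel approach avoids having to set up Kunita-type flow regularity, while yours makes the geometric mechanism ($J_b\geq0$ contracting the first tangent flow) more visible. Your treatment of the invariance of $\A_c$ via translation-equivariance of the flow and pathwise uniqueness is correct and is in fact somewhat more explicit than the paper, which leaves this last point implicit; note also that once $\A$ is preserved, the case of $\A_0$ under $\widetilde{P}_t$ is automatic since $\widetilde L$ acts only on $(x_1,\dots,x_N)$.
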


\begin{proof}
The convexity and growth conditions on $\textbf{W}$ ensures that $\phi(x) = 1 + \textbf{W}(x) +(\bar x)^2$, which is such that $|x|^2/\phi(x)$ is bounded, satisfies
\[L\phi \ \leqslant \ -\frac12 |\na \textbf{W}|^2 + A \phi  \ \leqslant \  A \phi\]
for some $A>0$, which implies equation \eqref{EqOverdampedParticules} admits a unique strong solution $\X$ (see \cite[Theorem 3.1]{Durrett} ; the same holds for the fixed problem with $\phi = 1 +\widetilde{\textbf{W}}$) and the associated semi-group is Feller (see \cite[\emph{(22.5) p. 164}]{RogersWilliams2}). If $f$ is $\mathcal C^\infty$, so is $P_t f$ for all $t\geq 0$ (see \cite[Theorem VII.5]{Brezis}). The existence of the Lyapunov function $\phi$ implies the moments of $\X$ are finite for all time. Indeed, for $m\in\mathbb N$ we see that, for some $A'$, $A''>0$,
\begin{eqnarray*}
L \phi^m  & = & m \phi^{m-1} L\phi + m(m-1) \phi^{m-2} \Gamma \phi\\
& \leqslant & m \phi^{m-1} \po -\frac12 |\na \textbf{W}|^2 + A \phi \pf + A' \phi^{m-2} \po |\na \textbf{W}|^2+ (\bar x)^2\pf\\
& \leqslant & A'' \phi^m,
\end{eqnarray*}
where we used that $\phi$ goes to infinity at infinity. As a classical consequence (see e.g. the proof of \cite[Lemma 2.1]{Talay}), $P_t \phi^m \leqslant e^{A'' t} \phi^m$, which means if $f$ grows at most polynomially at infinity, so is $P_t f$ for $t\geq0$. If $n$ is a multi-index, differentiating with respect to the space variables the Kolmogorov equation $\partial_t P_t f = L P_t f$ yields, from \cite[Theorem VII.10]{Brezis},
\begin{eqnarray*}
\partial_x^n P_t f & =& P_t \partial_x^n f + \int_0^t P_{t-s} [\partial^n,L] P_s f \dd s.
\end{eqnarray*}
For some $C,k>0$ depending on the derivatives of $W$, $|[\partial^n,L] g|(x) \leq C(1+x^k)\sum_{|j|<|n|}|\partial^j g|(x)$. Therefore by induction on $|n|$, if $\partial_x^j  f$ grow at most polynomially at infinity for all $|j|<|n|$, so does $\partial_x^n P_t f$. The case of $\widetilde P_t$ is similar.
\end{proof}


\subsection{Main results}

We note
\begin{eqnarray*}
L_G h(i) & = & \sum_{j\sim i} \lambda_{ij}\po h(j) - h(i) \pf
\end{eqnarray*}
the discrete Laplacian on $G$ (also seen as an $(N+1)\times (N+1)$ matrix). This is the generator of an ergodic Markov chain on $G$ (recall the $\lambda_{ij}$'s are positive and $G$ is irreducible). Since $L_G$ is symmetric the invariant measure is the uniform law $\nu$ (also seen as the column vector $\frac1{N+1}(1,\dots,1)$) on $G$. Let
\begin{eqnarray*}
\rho & = & \underset{h\in\R^{N+1}}{\inf} \frac{- h^T L_G h}{|h-\nu h|^2} 
\end{eqnarray*} 
be the spectral gap of $L_G$ and 
\begin{eqnarray*}
\rho_{D} & = & \underset{h\in\R^{N+1},\ h_0 = 0}{\inf} \frac{- h^T L_G h}{|h|^2} 
\end{eqnarray*}
be its Dirichlet eigenvalue.

\begin{prop}\label{PropGraph}
For the fixed problem associated to \eqref{EqGenerateurParticules}, for all admissible function $a$ and for all $t\geq 0,f\in\A_0$,
\begin{eqnarray*}
a''(\widetilde P_t f)|\na \widetilde P_t f |^2  & \leq & e^{-2\rho_D t} \widetilde P_t\po a''(f)|\na  f |^2\pf. 
\end{eqnarray*}
For the centered problem associated to \eqref{EqGenerateurParticules}, for all $t\geq 0,f\in\A_c$,
\begin{eqnarray*}
a''(P_t f) |\na P_t f |^2  & \leq & e^{-2\rho t}  P_t\po a''(f)|\na  f |^2\pf 
\end{eqnarray*}
\end{prop}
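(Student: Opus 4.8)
The plan is to prove, for each of the two processes, a curvature inequality $\Gamma_\Phi(f)\geq\rho'\,\Phi(f)$ with $\Phi(f)=a''(f)|\na f|^2$ and $\rho'=\rho$ for $\textbf{X}$ (resp.\ $\rho'=\rho_D$ for $\widetilde{\textbf{X}}$), valid for (positive) $f$ in the constrained space $\A_c$ (resp.\ $\A_0$), and then to invoke Lemma~\ref{LemGammaCurvature}.

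First I would feed the situation into Example~8. Both $L$ and $\widetilde L$ are diffusion operators with \emph{constant} diffusion matrix; their drifts are $-b$ and $-\tilde b$ with $b=\tfrac12\na\textbf{W}$ and $\tilde b=\tfrac12\na\widetilde{\textbf{W}}$, where $\widetilde{\textbf{W}}(\tilde x)=\textbf{W}(0,\tilde x)$. Applying inequality \eqref{EqGammadeDiffusion} (itself a consequence of Lemma~\ref{LemGammaEntropic}) with $M=I$ to the drift $-b$, whose Jacobian is $-\tfrac12\na^2\textbf{W}$, gives
\[
\Gamma_\Phi(f)\ \geq\ \tfrac12\,a''(f)\,(\na f)^T\na^2\textbf{W}(x)\,\na f ,
\]
and likewise with $\widetilde{\textbf{W}}$ for $\widetilde L$. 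So everything reduces to lower-bounding the quadratic form of the Hessian of $\textbf{W}$.

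The algebraic core is to recognise this Hessian as a weighted graph Dirichlet form. Writing $v=(v_0,\dots,v_N)$, $v_i\in\R^d$, for $\na f$ and $A_{ij}(x)=\na^2W_{i,j}(x_i-x_j)\geq\lambda_{ij}$, a direct computation from $\textbf{W}(x)=\sum_k\sum_{j\sim k}W_{k,j}(x_k-x_j)$, using that the $W_{i,j}=W_{j,i}$ are even, yields
\[
\tfrac12\,v^T\na^2\textbf{W}(x)\,v\ =\ \tfrac12\sum_{i\sim j}(v_i-v_j)^TA_{ij}(x)(v_i-v_j)\ \geq\ \tfrac12\sum_{i\sim j}\lambda_{ij}|v_i-v_j|^2 ,
\]
the sums being over ordered pairs. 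Read coordinatewise, for each $p\in\{1,\dots,d\}$ the scalar field $h^{(p)}(i):=(v_i)_p$ on $G$ satisfies $\tfrac12\sum_{i\sim j}\lambda_{ij}(h^{(p)}(i)-h^{(p)}(j))^2=-(h^{(p)})^TL_Gh^{(p)}$, so the last right-hand side equals $\sum_p(-(h^{(p)})^TL_Gh^{(p)})$. For $f\in\A_c$ the constraint $\sum_i\na_{x_i}f=0$ says $\nu h^{(p)}=0$ for every $p$, so by definition of the spectral gap $-(h^{(p)})^TL_Gh^{(p)}\geq\rho|h^{(p)}|^2$, and summing over $p$ gives $\Gamma_\Phi(f)\geq\rho\,a''(f)|\na f|^2=\rho\,\Phi(f)$. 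For the fixed problem one redoes this with $v_0:=0$ (so $h^{(p)}(0)=0$) and the definition of the Dirichlet eigenvalue $\rho_D$ in place of $\rho$, obtaining $\Gamma_{\widetilde\Phi}(f)\geq\rho_D\,\widetilde\Phi(f)$ on $\A_0$.

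Finally I would conclude as in the proof of Lemma~\ref{LemGammaCurvature}: for $\psi(s)=P_s\Phi(P_{t-s}f)$ only the curvature bound along the curve $s\mapsto P_{t-s}f$ is needed, and this curve stays in $\A_c$ by the preceding proposition (likewise $\widetilde P_t$ preserves $\A_0$), so Gronwall gives $a''(P_tf)|\na P_tf|^2\leq e^{-2\rho t}P_t(a''(f)|\na f|^2)$ and the analogue for $\widetilde P_t$ with $\rho_D$. I do not anticipate a genuine obstacle here: the care is all bookkeeping --- the sign of the drift when quoting \eqref{EqGammadeDiffusion}, the elementary (if slightly tedious) verification of the Hessian identity displayed above, and the point that the constrained curvature bound transfers to the semigroup precisely because $\A_c$ (resp.\ $\A_0$) is preserved by $P_t$ (resp.\ $\widetilde P_t$), which is exactly the content of the preceding proposition.
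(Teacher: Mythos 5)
Your proof is correct and takes essentially the same route as the paper: apply inequality \eqref{EqGammadeDiffusion} (Example~8, with $M=I$), identify the resulting Hessian quadratic form with the weighted graph Dirichlet form $-(\na f)^T L_G\na f$, invoke the spectral gap (resp.\ Dirichlet eigenvalue) under the constraint $\nu\na f=0$ (resp.\ $\na_0 f=0$) to get the curvature bound $\Gamma_\Phi\geq\rho\Phi$ on $\A_c$ (resp.\ $\rho_D\Phi$ on $\A_0$), and conclude via Lemma~\ref{LemGammaCurvature} together with the invariance of $\A_c$, $\A_0$ under the semigroups. The only difference is cosmetic: you make the coordinatewise reduction to the scalar spectral gap explicit where the paper writes $L_G$ acting block-wise on $\na f$.
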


\begin{proof}
For $\Phi(f) = a''(f) |\na f|^2$, we computed in \eqref{EqGammadeDiffusion} that
\[\Gamma_{\hat L,\Phi} f \geq a''(f) (\na f)^T J_b \na f\hspace{20pt}\text{and}\hspace{20pt}\Gamma_{\tilde L,\Phi} f \geq a''(f) (\na f)^T J_{\tilde b} \na f\]
where $J_b$ and $J_{\tilde b}$ are the Jacobian matrices of the drifts $b$ and $\tilde b$. We compute
\begin{eqnarray*}
(\na f)^T J_b \na f & =& \sum_{i\in G}\sum_{j\sim i} (\na_{x_i} f)^T \po \na^2 W_{i,j}\pf (\na_{x_i} -\na_{x_j}) f\\
& =& \sum_{(i,j)\in E} \po (\na_{x_i} - \na_{x_j} ) f\pf^T \po \na^2 W_{i,j}\pf (\na_{x_i} -\na_{x_j}) f\\
& \geq & \sum_{(i,j)\in E} \lambda_{i,j} |(\na_{x_i} - \na_{x_j} ) f|^2\\
& = & -(\na f)^T L_G \na f,
\end{eqnarray*}
and similarly $(\na f)^T J_{\tilde b} \na f \geq - (\na f)^T  L_G \na f$ for $f\in \A_0$. When $f\in\A_c$, $\nu \na f =  0$, and thus $\Gamma_{\hat L,\Phi} f \geq \rho \Phi(f)$. When $f\in\A_0$,  $\Gamma_{\widetilde L,\Phi} f \geq \rho_D \Phi(f)$. In both case (and since $\A_c$ and $\A_0$ are respectively fixed by $P_t$ and $\widetilde P_t$), Lemma \ref{LemGammaCurvature} concludes.
\end{proof}

Let
\begin{eqnarray*}
\Theta_N & = & \left\{x\in\R^{(N+1)d},\ \bar x = 0\right\}
\end{eqnarray*}
so that $\hX \in \Theta_N$ for all $t\geq 0$, and thus the associated semi-group $\widehat{P_t}$ acts on  $\mathcal P\po \Theta_N\pf$.

\begin{cor}\label{CorGraph}
Suppose in addition to Assumptions $(A)$ and $(B)$ that the processes $\X$ and $\tX$ are hypoelliptic. Then for any initial laws $\nu_1,\nu_2 \in \mathcal P\po \Theta_N\pf$,
\begin{eqnarray*}
\mathcal W_2\po \nu_1 \widehat{P_t}, \nu_2 \widehat{P_t} \pf & \leq & e^{-\rho t}\mathcal W_2\po \nu_1 , \nu_2  \pf  
\end{eqnarray*}
and for any initial laws $\nu_1,\nu_2 \in \mathcal P\po \R^{dN}\pf$,
\begin{eqnarray*}
\mathcal W_2\po \nu_1 \widetilde P_t, \nu_2 \widetilde  P_t \pf & \leq & e^{-\rho_D t}\mathcal W_2\po \nu_1 , \nu_2  \pf.
\end{eqnarray*}
Moreover both the fixed and the centered problems admits a unique invariant law, respectively denoted by $\mu_0$ and $\mu_c$, which satisfy the log-Sobolev inequalities
\begin{eqnarray*}
\forall f\in\A_c \ s.t.\ \int f \dd \mu_c = 1,\hspace{30pt}\int f \ln f \dd \mu_c & \leq & \frac{|S|}{2\rho}\int \frac{|\na f|^2}f \dd \mu_c\\
\forall f\in\A_0 \ s.t.\ \int f \dd \mu_0 = 1,\hspace{30pt}\int f \ln f \dd \mu_0 & \leq & \frac{|\tilde S|}{2\rho_D}\int \frac{|\na f|^2}f \dd \mu_0
\end{eqnarray*}
\end{cor}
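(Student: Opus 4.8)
The plan is to deduce Corollary \ref{CorGraph} from Proposition \ref{PropGraph} by combining three ingredients: Kuwada's duality (Theorem \ref{TheorKuwada}) to turn the gradient contraction into a Wasserstein contraction, a fixed-point argument in the (complete) Wasserstein space to produce the unique invariant measure, and Lemma \ref{LemInegalitePoincare} to convert the gradient bound into the stated log-Sobolev inequality. The main obstacle is bookkeeping: the centered process $\hX$ lives on the affine subspace $\Theta_N$ and is never hypoelliptic as a process on $\R^{(N+1)d}$ (its noise cannot move $\bar x$, but that direction has been frozen), so one must either work intrinsically on $\Theta_N \cong \R^{dN}$ with its Euclidean metric, or use the auxiliary process $\hX + Z$ mentioned in the Settings; I would take the former, noting that on $\Theta_N$ the hypoellipticity of $\X$ restricted to the center-of-mass-free directions is exactly the hypothesis, and that $\A_c$ is precisely the set of functions on $\R^{(N+1)d}$ that descend to $\Theta_N$.

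First I would record the Wasserstein contractions. Apply Theorem \ref{TheorKuwada} with $p=q=2$: Proposition \ref{PropGraph} with $a(f)=f^2$ (so $a''=2$) gives $|\na \widehat{P_t} f|^2 \leq e^{-2\rho t}\widehat{P_t}|\na f|^2$ for all Lipschitz $f$ on $\Theta_N$, hence $|\na \widehat{P_t} f| \leq e^{-\rho t}(\widehat{P_t}|\na f|^2)^{1/2}$, which is \eqref{EqContractGradient} with $\gamma = e^{-\rho t}$. The hypoellipticity assumption guarantees $\widehat{P_t}(x)$ has a continuous (smooth) density with respect to Lebesgue measure on $\Theta_N$, so the converse direction of Theorem \ref{TheorKuwada} applies and yields $\mathcal W_2(\nu_1\widehat{P_t},\nu_2\widehat{P_t}) \leq e^{-\rho t}\mathcal W_2(\nu_1,\nu_2)$. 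The same argument with $\widetilde L$, $\rho_D$ in place of $\hat L$, $\rho$ gives the contraction for $\widetilde P_t$ on $\R^{dN}$.

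Next, existence and uniqueness of $\mu_c$ (resp. $\mu_0$): since $\mathcal P_2(\Theta_N)$ equipped with $\mathcal W_2$ is a complete metric space (\cite{BolleyWasserstein}) and $\widehat{P_t}$ is, for any fixed $t>0$, a strict contraction on it with ratio $e^{-\rho t}<1$, Banach's fixed-point theorem gives a unique fixed point $\mu_c$ of $\widehat{P_t}$; a standard argument (the fixed point of $\widehat{P_t}$ is also fixed by $\widehat{P_{t/n}}$, and all such fixed points coincide, or alternatively using the semigroup property) shows $\mu_c$ is invariant for the whole semigroup. Moreover for every initial law $\nu$, $\nu\widehat{P_t}\to\mu_c$ in $\mathcal W_2$; by Kantorovich--Rubinstein this forces $\widehat{P_t}f(x)\to\mu_c f$ for every Lipschitz $f$ and then, since $\mu_c$ has a finite exponential moment (the convexity of $\mathbf W$ provides the Lyapunov control, or one notes the Poincaré inequality below already forces it), for every $f\in\A_c$ by density; thus $\widehat{P_t}$ is ergodic in the sense of the paper. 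The same reasoning applies to $\widetilde P_t$ with $\rho_D$.

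Finally, the log-Sobolev inequality. With $\widehat{P_t}$ ergodic and $\Phi_1(f) = f\ln f$, $\Phi_2(f) = |\na f|^2/f$, recall from Example 2 that for a diffusion $\Gamma_{\Phi_1}(f) = \frac{(\na f)^T S\na f}{2f} \leq \frac{|S|}{2}\,\frac{|\na f|^2}{f}$, so Proposition \ref{PropGraph} (applied with $a(f)=f\ln f$, for which $a''(f)=1/f$ and which is admissible) gives $\Gamma_{\Phi_1}(\widehat{P_t}f) \leq \frac{|S|}{2} e^{-2\rho t}\widehat{P_t}\Phi_2(f)$, i.e. the hypothesis of Lemma \ref{LemInegalitePoincare} with $\gamma(t) = \frac{|S|}{2}e^{-2\rho t}\in L^1(\R_+)$ and $\int_0^\infty\gamma = \frac{|S|}{4\rho}$. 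Lemma \ref{LemInegalitePoincare} then yields $\mathrm{Ent}_{\mu_c}f \leq 2\cdot\frac{|S|}{4\rho}\int\Phi_2(f)\,\dd\mu_c = \frac{|S|}{2\rho}\int\frac{|\na f|^2}{f}\dd\mu_c$ for $f\in\A_c$ with $\int f\,\dd\mu_c = 1$, which is the claimed inequality. Replacing $(\hat L,S,\rho)$ by $(\widetilde L,\widetilde S,\rho_D)$ gives the second log-Sobolev inequality on $\A_0$. I expect the only genuinely delicate point to be making the reduction to $\Theta_N$ clean — in particular checking that $\A_c$ is the right test-function class there and that the density/hypoellipticity statement needed for the converse half of Theorem \ref{TheorKuwada} holds on the subspace — whereas the fixed-point and Lemma \ref{LemInegalitePoincare} steps are routine.
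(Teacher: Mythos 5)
Your proposal is correct and follows essentially the same route as the paper: Kuwada's theorem with $a(f)=f^2$ gives the Wasserstein contraction, the Banach fixed-point argument in $\mathcal P_2$ with $\mathcal W_2$ gives existence and uniqueness of the invariant law, and Lemma~\ref{LemInegalitePoincare} with $\Phi_1(f)=f\ln f$, $\Phi_2(f)=|\na f|^2/f$ and $\gamma(t)=\tfrac{|S|}{2}e^{-2\rho t}$ gives the log-Sobolev inequality with the stated constant $\tfrac{|S|}{2\rho}$. You are actually a bit more careful than the paper in spelling out why hypoellipticity is needed (to invoke the converse half of Theorem~\ref{TheorKuwada}) and in noting that the centered process must be handled on $\Theta_N$ rather than on $\R^{(N+1)d}$.
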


\begin{proof}
Both cases are similar and thus we only consider the first one. The Wasserstein contractions is implied by Theorem \ref{TheorKuwada} and Proposition \ref{PropGraph} for $a(f) = f^2$ (recall that for $f\in\A_c$, $P_t f = \widehat{P_t} f$). When $E$ is a Polish space, so is $\mathcal P_2(E) = \{\nu \in \mathcal P(E) \text{ having a finite $2^{nd}$ moment}\}$ endowed with the $\mathcal W_2$ metric (\cite{BolleyWasserstein}). Therefore by the fixed point Theorem, $\nu = \nu \widehat{P_t}$ admits a unique solution in $\mathcal P_2(\Theta_N)$ which may a priori depend on $t$, but then $\nu \widehat{P_s} = \nu \widehat{P_t} \widehat{P_s} = \nu \widehat{P_s} \widehat{P_t}$, and by uniqueness of the solution $\nu \widehat{P_s} = \nu$ for all $s\geq 0$. The existence of the Lyapunov function $\textbf W$ implies an invariant measure necessarily have a finite second moment.

The log-Sobolev inequality is implied by Proposition \ref{PropGraph} for $a(x) = x \ln x$ together with Lemma \ref{LemInegalitePoincare} for $\gamma(t) = e^{-2\rho t}$, $\phi_1(f) = f \ln f$ and $\phi_2(f) = \frac{|\na f|^2}{f}$ so that 
\[\Gamma_{\Phi_1} f  =  \frac{(\na f)^T S \na f}{2f} \leq \frac12|S| \Phi_2(f). \]
\end{proof}

\textbf{Remark:} since the drift of the diffusion is of the form $-\na \textbf W$ with a convex $\textbf{W}$, there would be no difficulties to apply here the Meyn-Tweedie-Lyapunov techniques for proving convergence in total variation distance (cf. \cite{CattiauxGuillinPAZ,Cattiaux2008}) under  controllability or strong  hypoellipticity conditions. However the "explicit" speed of convergence obtained this way depend on estimates of the probability transition of the process on a compact set, which are much less trackable than the constant involved in Corollary \ref{CorGraph}. The use of an explicit mirror coupling (see \cite{CattiauxGuillin2014}) may give satisfactory estimates.

\subsection{Chain of particles}

Suppose $i\sim j$ iff $|i-j|\leq 1$, and $W_{i,j} = W$  does not depend on the edge $(i,j)$ (and so does $\lambda_{i,j}=\lambda$).
 We call this dynamics \emph{the chain of $N+1$ particles}.

\begin{prop}\label{PropChain1}
For the chain of $N+1$ particles, for $N\geq 3$,
\[\rho \geq \frac{\lambda}{(N+1)^2}\]
\end{prop}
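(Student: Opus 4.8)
The plan is to estimate the spectral gap $\rho$ of the discrete Laplacian $L_G$ on the path graph $G=\{0,\dots,N\}$ with constant edge weights $\lambda_{i,j}=\lambda$, i.e. $\rho = \lambda\,\rho_1$ where $\rho_1$ is the spectral gap of the simple path Laplacian $\Delta_{\mathrm{path}}h(i) = \sum_{j:\,|j-i|=1}(h(j)-h(i))$. Since $\rho = \inf_{h}\frac{-h^TL_Gh}{|h-\nu h|^2}$ is the smallest nonzero eigenvalue of $-L_G$, and the Dirichlet form is $-h^TL_Gh = \lambda\sum_{i=0}^{N-1}(h(i+1)-h(i))^2$, the claim $\rho\geq \frac{\lambda}{(N+1)^2}$ amounts to a Poincar\'e inequality on the discrete path: for every $h:\{0,\dots,N\}\to\R$ with $\sum_{i=0}^N h(i)=0$ (equivalently $\nu h = 0$),
\begin{eqnarray*}
\sum_{i=0}^N h(i)^2 & \leq & (N+1)^2 \sum_{i=0}^{N-1}\bigl(h(i+1)-h(i)\bigr)^2.
\end{eqnarray*}

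First I would reduce to a telescoping/Cauchy-Schwarz argument. For any $i,j$ with $i<j$ one writes $h(j)-h(i) = \sum_{k=i}^{j-1}\bigl(h(k+1)-h(k)\bigr)$, and by Cauchy-Schwarz $\bigl(h(j)-h(i)\bigr)^2 \leq (j-i)\sum_{k=i}^{j-1}\bigl(h(k+1)-h(k)\bigr)^2 \leq N\,E(h)$, where $E(h) = \sum_{k=0}^{N-1}\bigl(h(k+1)-h(k)\bigr)^2$. Then, using the mean-zero condition, $h(i) = \frac1{N+1}\sum_{j=0}^N\bigl(h(i)-h(j)\bigr)$, so by Jensen/Cauchy-Schwarz $h(i)^2 \leq \frac1{N+1}\sum_{j=0}^N \bigl(h(i)-h(j)\bigr)^2 \leq \frac1{N+1}\cdot(N+1)\cdot N\,E(h) = N\,E(h)$. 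Summing over $i=0,\dots,N$ gives $\sum_i h(i)^2 \leq (N+1)N\,E(h) \leq (N+1)^2 E(h)$, which is exactly the desired inequality, hence $\rho_1 \geq (N+1)^{-2}$ and $\rho \geq \lambda/(N+1)^2$. The hypothesis $N\geq 3$ is presumably only used to absorb constants or to ensure the graph is genuinely a path with more than a couple of vertices; the crude bound above already works for all $N\geq 1$, so if a sharper constant is wanted one could instead invoke the exact eigenvalues of the path Laplacian, $2(1-\cos(\pi k/(N+1)))$ for $k=0,\dots,N$, whose first nonzero value $2(1-\cos(\pi/(N+1))) \sim \pi^2/(N+1)^2$ comfortably exceeds $(N+1)^{-2}$ for $N\geq 3$.

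I do not expect any serious obstacle here: this is a standard one-dimensional discrete Poincar\'e inequality and the only care needed is bookkeeping the factor $N+1$ versus $N$ and making sure the mean-zero normalization is used correctly (one must subtract the mean with respect to the uniform measure $\nu$, consistent with the definition of $\rho$ in the paper). The mildly delicate point, if one insists on the $N\geq 3$ hypothesis mattering, is to identify exactly where a constant of order $1$ gets swallowed; but for the stated inequality $\rho\geq\lambda/(N+1)^2$ the elementary telescoping argument suffices and the restriction $N\geq 3$ is harmless. If a cleaner proof is preferred, one can diagonalize $L_G$ explicitly via the discrete cosine basis and read off $\rho = 2\lambda(1-\cos(\pi/(N+1)))$, then bound $1-\cos\theta \geq \theta^2/2 - \theta^4/24 \geq \theta^2/(2\cdot\pi^2/3)$ for $\theta=\pi/(N+1)\leq\pi/4$ (which holds precisely when $N\geq 3$), giving $\rho \geq \lambda/(N+1)^2$; this is the version in which the hypothesis $N\geq 3$ is genuinely used.
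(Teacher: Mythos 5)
Your argument is correct, and it takes a genuinely different route from the paper's. You prove the discrete Poincar\'e inequality directly: writing $h(i)-h(j)$ as a telescoping sum of increments, bounding $\bigl(h(i)-h(j)\bigr)^2 \leq N\,E(h)$ by Cauchy--Schwarz, and then using the mean-zero condition $h(i)=\frac{1}{N+1}\sum_j\bigl(h(i)-h(j)\bigr)$ together with Jensen to get $h(i)^2\leq N\,E(h)$ and summing over $i$. This is fully elementary, self-contained, and valid for every $N\geq 1$. The paper instead cites a Cheeger-type inequality for graph Laplacians (Theorem 4.2 of Mohar), which for graphs of maximal degree $2$ and $N\geq 3$ gives $\rho \geq \lambda h^2/4$ in terms of the Cheeger constant $h$, and then estimates $h\geq 2/(N+1)$ for the path; the restriction $N\geq 3$ in the statement is imposed entirely by that cited theorem. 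So you correctly guessed that the hypothesis $N\geq 3$ is not needed for your argument, and your reduction to the one-dimensional Poincar\'e inequality is the right formulation of the problem. The trade-off is that the Cheeger route is shorter to write if one is willing to invoke the reference, while your argument buys transparency and independence from that reference; your alternative via the explicit path Laplacian spectrum $2\lambda(1-\cos(\pi/(N+1)))$ would also give a constant roughly $\pi^2$ times better, but that refinement is not needed here since only the $N^{-2}$ order matters downstream.
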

\begin{proof}
Let $h = \min\left\{\frac{|\partial A|}{|A|},\ |A|<\frac{|G|}2\right\}$ be the Cheeger constant of $G$, where $\partial A \subset E$ is the set of (non-oriented) edges $(i,j)$ with $i\in A$ and $j\notin A$, and $|B|$ denotes the cardinal of a set $B$. Since the maximal degree of $G$ is 2, from \cite[Theorem 4.2]{Mohar}, for $N\geq 3$,
\begin{eqnarray*}
\rho & \geq & \lambda \frac{h^2}4.
\end{eqnarray*}
It is clear the minimum of $\frac{|\partial A|}{|A|}$ is attained with $A=\{0,\dots,\lfloor (N+1)/2 \rfloor \}$, so that $h \geq \frac{2}{N+1}$.
\end{proof}

\begin{prop}\label{PropChain2}
For the chain of $N+1$ particles,
\[\rho_D \geq \lambda \po 1 - \cos\po \frac{\pi}{2N}\pf \pf\]
\end{prop}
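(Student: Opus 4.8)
The plan is to identify $\rho_D$ with the smallest eigenvalue of an explicit tridiagonal matrix and to diagonalise that matrix by a sine ansatz. Writing $h=(0,h_1,\dots,h_N)$, one has $-h^TL_Gh=\lambda\sum_{i=0}^{N-1}(h_{i+1}-h_i)^2$ and $|h|^2=\sum_{i=1}^N h_i^2$, so $\rho_D=\lambda\,\mu_{\min}$ where $\mu_{\min}$ is the smallest eigenvalue of the $N\times N$ matrix
\[A=\begin{pmatrix}2&-1&&\\-1&2&\ddots&\\&\ddots&\ddots&-1\\&&-1&1\end{pmatrix},\]
that is, the path Laplacian on $\{1,\dots,N\}$ with a Dirichlet condition at the removed site $0$ and a free (Neumann) condition at the site $N$.

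First I would look for eigenvectors of the form $h_i=\sin(i\theta)$. The bulk relations $-h_{i-1}+2h_i-h_{i+1}=\mu h_i$ together with $h_0=0$ (automatically satisfied by the ansatz) force $\mu=2-2\cos\theta$, while the last row $-h_{N-1}+h_N=\mu h_N$ becomes, after using $2\cos\theta\sin(N\theta)=\sin((N+1)\theta)+\sin((N-1)\theta)$, simply $\sin((N+1)\theta)=\sin(N\theta)$, i.e. $(2N+1)\theta\in\pi+2\pi\mathbb Z$. This yields the $N$ numbers $\theta_k=\frac{(2k+1)\pi}{2N+1}\in(0,\pi)$, $k=0,\dots,N-1$, which are pairwise distinct, give distinct eigenvalues $2-2\cos\theta_k$ (as $\cos$ is injective on $(0,\pi)$), and hence exhaust the spectrum of $A$; the smallest corresponds to $\theta_0=\frac{\pi}{2N+1}$. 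Therefore $\rho_D=2\lambda\bigl(1-\cos\frac{\pi}{2N+1}\bigr)$ (the same value is obtained by reflecting $h$ across the midpoint of the last edge and restricting to it the symmetric eigenfunctions of the Dirichlet Laplacian on a path of $2N$ interior sites).

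It then suffices to prove the elementary inequality $2\bigl(1-\cos\frac{\pi}{2N+1}\bigr)\ge 1-\cos\frac{\pi}{2N}$. Halving the angles, it reads $\sqrt2\,\sin\frac{\pi}{4N+2}\ge\sin\frac{\pi}{4N}$. For $N=1$ both sides equal $\tfrac{\sqrt2}{2}$, and for $N\ge2$, since $x\mapsto\frac{\sin x}{x}$ is non-increasing on $(0,\pi]$ (concavity of $\sin$ and $\sin 0=0$) and $\frac{\pi}{4N}>\frac{\pi}{4N+2}$,
\[\frac{\sin\frac{\pi}{4N}}{\sin\frac{\pi}{4N+2}}\le\frac{4N+2}{4N}=1+\frac1{2N}\le\frac54<\sqrt2,\]
which gives the claim.

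The computation is routine; the only point requiring a little care is the last trigonometric comparison, which is an equality at $N=1$ and has room to spare afterwards. In particular one should not prove the sharp identity $\rho_D=2\lambda(1-\cos\frac{\pi}{2N+1})$ and stop there, since the stated bound, though cleaner, is strictly weaker for $N\ge2$; conversely, knowing the exact value is overkill, and any argument giving a lower bound of the right order (e.g. a discrete Hardy inequality) would do, but the sine ansatz is the most transparent route.
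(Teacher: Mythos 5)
Your proof is correct, and it takes a genuinely different route from the paper. The paper reduces $\rho_D$ to the exponential decay rate of the absorption time $T$ of the continuous-time nearest-neighbour walk on $\{0,\dots,N\}$ absorbed at $0$ (via the Diaconis--Miclo quasi-stationarity characterisation), and then bounds that rate by a time-change comparison with a closely related walk $\widetilde Y$ whose spectral decomposition is known from \cite{DiaconisMicloQuasiStat}; the factor $\tfrac12$ in the comparison, together with the known rate $2(1-\cos(\pi/(2N)))$ for $\widetilde Y$, yields the stated bound. You instead diagonalise the Dirichlet--Neumann tridiagonal matrix directly by the sine ansatz, which produces the \emph{exact} value $\rho_D=2\lambda\bigl(1-\cos\frac{\pi}{2N+1}\bigr)$, and you then deduce the slightly weaker stated bound by a short, self-contained trigonometric comparison ($\sqrt2\sin\frac{\pi}{4N+2}\ge\sin\frac{\pi}{4N}$, an equality at $N=1$ and strict with room to spare for $N\ge2$). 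Your route is more elementary (no external probabilistic lemma is needed) and strictly more informative, since it identifies the optimal constant; the paper's route is arguably more transferable to settings where the tridiagonal matrix is not explicitly diagonalisable, and it fits the probabilistic flavour of the surrounding discussion, but here the sine ansatz is the more transparent argument. Both asymptotics agree at leading order, $\rho_D\sim\lambda\pi^2/(4N^2)$, whereas the stated bound gives $\lambda\pi^2/(8N^2)$; your proof makes this loss visible.
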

\begin{proof}
 Consider $(Y_t)_{t\geq 0}$ the continuous time nearest neighbour random walk on $\{0,\dots,N\}$ starting at $N$ and absorbed at 0, whose generator is $\frac1{\lambda}\widetilde L_G$. The dynamics is the following: from the time $t_0$, $Y$ waits a time $\frac12 E$ with standard exponential law, and then jump to $Y_{t_0+E}$ equal to either $Y_{t_0}+1$ or $Y_{t_0}-1$ with equal probability $\frac12$, unless $Y_{t_0} = 0$ in which case it does not jump, or unless $Y_{t_0} = N$ in which case it jumps to $N-1$ or stays at $N$ with equal probability $\frac12$ (or in other words, it waits a time $E$ to bounce back to $N-1$). According to the work \cite{DiaconisMicloQuasiStat} of Diaconis and Miclo, if $T$ is the time of absorption of $Y$, then
\[\underset{t\rightarrow\infty} \lim \frac1t \mathbb P\po T > t \pf  = - \rho_D.\]
The process $Y$ is very close to the process $\widetilde Y$ of \cite[Example 16]{DiaconisMicloQuasiStat}, the only difference (up to renumbering the states) being that when $\widetilde Y_{t_0}=N$, the process $\widetilde Y$ only waits a time $\frac12 E$ to bounce back to $N-1$. As a consequence, $\widetilde Y$ goes faster to $N$ that $Y$, but $(Y_{2t})_{t\geq 0}$ goes faster to $N$ than $\widetilde Y_t$ and 
\begin{eqnarray*}
\underset{t\rightarrow\infty} \lim \frac1t \mathbb P\po T > t \pf & \leq & \frac12\  \underset{t\rightarrow\infty} \lim \frac1t \mathbb P\po \widetilde T > t \pf
\end{eqnarray*}
where $\widetilde T$ is the absorbing time of $\widetilde Y$. Now explicit spectral computations (cf. \cite{DiaconisMicloQuasiStat}) yields
\begin{eqnarray*}\underset{t\rightarrow\infty} \lim \frac1t \mathbb P\po \widetilde T > t \pf & = & -2\po 1 - \cos\po \frac{\pi}{2N}\pf\pf.
\end{eqnarray*}

\end{proof}
\textbf{Remark:} As an alternative proof, we could have estimated the Laplace transform of $T$. Indeed Chernoff's inequality yields $\mathbb P\po T > t \pf \leq \mathbb E \po e^{\theta T} \pf e^{-\theta t}$. Denoting by $T_k$ the absorbing time starting from $k$ and $b_k = \mathbb E \po e^{\theta T_k}\pf\po \mathbb E \po e^{\theta T_N}\pf\pf^{-1}$, a recurrence relation satisfied by the $b_k$'s and the fact $T_0= 0$ allow to prove that for any $\eta<\frac{\pi}{2N}$, $\mathbb E \po e^{(1-\cos(\eta)){\lambda}T_k}\pf = \frac{1}{\cos(k\eta)}$. This means $\rho_D \geq \lambda\po 1 - \cos\po \frac{\pi}{2N}\pf\pf$. 

\bigskip

Now consider a chain of particles for which all the diffusion blocks $s_{i,j}$ are zero except $s_{N,N}$ and $s_{0,0}$ which are homogeneous dilations. In other words, we are interested in the processes $\X$ and $\tX$ defined by
\begin{eqnarray}\label{Eq25}
& & \left.\begin{array}{rcl}
\dd Z_i & = & - \po \na W(Z_i - Z_{i-1}) + \na W(Z_i - Z_{i+1})\pf \dd t, \hspace{20pt}\forall i\in\cco 1,N-1 \ccf,\ Z_i = X_i\text{ or } \widetilde X_i\\
& & \\
\dd Z_N & = & -  \na W(Z_N - Z_{N-1}) \dd t + \sigma_N \dd B_1(t), \hspace{25pt}\ Z_N = X_N\text{ or } \widetilde X_N\\
& & \\
\dd X_0 & = & -  \na W(X_0 - X_i) \dd t + \sigma_0 \dd B_2(t)\\
& & \\
\widetilde X_0 & = & 0.
\end{array}  \right\}
\end{eqnarray} 
where $B_1$ and $B_2$ are independent $d$-dimensional Brownian motion and $\sigma_0,\sigma_N\in\R$. 

\begin{lem}
The processes $\X$  and $\tX$ defined by \eqref{Eq25} are hypoelliptic. Moreover, if $\na^2 W$ is bounded, then the assumptions of Theorem \ref{TheoGeneralEllipticite} are satisfied.
\end{lem}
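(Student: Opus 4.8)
The plan is to handle the two assertions separately, in both cases exploiting the tridiagonal (nearest-neighbour) structure of the drift; throughout, $\na_i$ denotes the gradient block with respect to the position of particle $i$, the index $0$ being absent for $\tX$ since $\widetilde X_0\equiv 0$.

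For hypoellipticity, write the generator in H\"ormander form $L=B_0+\sum_j B_j^2$ with $B_0=-b\cdot\na$ and the $B_j$'s equal to the constant vector fields $\sigma_N\partial_{X_N^{(l)}}$, $l=1,\dots,d$ (and, for $\X$, also $\sigma_0\partial_{X_0^{(l)}}$). Since $b_i$ depends only on the positions of the particles $i-1,i,i+1$, one has $[B_0,\partial_{X_k^{(l)}}]=\sum_i(\partial_{X_k^{(l)}}b_i)\cdot\na_i$, where only the neighbours $i\in\{k-1,k,k+1\}$ contribute, with coefficients equal up to sign to columns of $\na^2W$ taken at the relevant difference variables. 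As $W$ is $\lambda$-strongly convex these Hessians are invertible, so each such bracket, modulo the directions already at hand, exactly produces the gradient with respect to a neighbouring particle. Iterating the bracket with $B_0$ from the noisy end(s) then propagates along the chain: for $\tX$ one reaches $\na_N,\na_{N-1},\dots,\na_1$ after $N-1$ brackets, while for $\X$ the two fronts issued from particles $N$ and $0$ meet after about $N/2$ brackets. In both cases the iterated brackets span the whole tangent space at every point, so the process is hypoelliptic.

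To verify the hypotheses of Theorem \ref{TheoGeneralEllipticite} when $\na^2W$ is bounded, I would turn this propagation into a recursion producing the family $(C_i,R_i,Z_i)$. Take $C_0$ to be the noisy directions, so that $C_0^TC_0\leq m\sum_{j\geq1}B_j^TB_j$ with $m=\max(\sigma_N^{-2},\sigma_0^{-2})$, which is the first inequality of (iv) (and, with $R_0=0$, its case $i=0$). Assuming inductively that $C_0,\dots,C_i$ have constant coefficients and that their span, as derivation-valued operators, equals $\{\na_q:q\in J_i\}$ for a contiguous block $J_i$ of particles containing the ones with noise, evaluate $[B_0,C_i]$: the only gradient outside $J_i$ that occurs is $\na_p$ for the unique particle $p$ adjacent to $J_i$, with coefficient a single block $\pm\na^2W$ at a difference variable; set $C_{i+1}:=\na_p$, signed so that $Z_{i+1}$ is that Hessian block, let $R_{i+1}$ collect the remaining terms, and put $J_{i+1}=J_i\cup\{p\}$. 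Those remaining terms are supported on $\{\na_q:q\in J_i\}$ with coefficients that are finite sums of Hessian blocks, hence $R_{i+1}$ lies in the span of $C_0,\dots,C_i$ with coefficients bounded in operator norm by a constant $m=m(N,d,\Lambda)$, where $\Lambda=\sup_x|\na^2W(x)|$. One stops at $i=N_c$, when $J_{N_c}$ is the whole chain; then $C_{N_c+1}:=0$ and $[B_0,C_{N_c}]$, being supported on the directions already constructed, is taken as $R_{N_c+1}$. For $\X$ a few further steps absorb the overlap of the two propagation fronts, the extra terms produced there being again supported on earlier $C_j$'s.

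Checking (i)--(iv) is then routine: (i) holds by construction with $C_{N_c+1}=0$; (ii) holds because the $C_i$ have constant coefficients and the $B_j$ are constant vector fields, so $[B_j,C_i]=0$; (iii) is exactly where the boundedness of $\na^2W$ is used, each $Z_i$ being a block of $\na^2W$ at a difference variable (or the identity for an unused index), whence $\lambda\leq(Z_i+Z_i^T)/2\leq\Lambda$ by strong convexity and the bound on $\na^2W$; and (iv) follows from the bound on $C_0$ together with the control of $R_i$ by $C_0,\dots,C_{i-1}$ obtained above. The step I expect to be the main obstacle is the bookkeeping of the recursion --- tracking which gradient is new at each stage, making sure that $Z_{i+1}$ is one single Hessian block so that (iii) survives, and checking that $R_{i+1}$ never leaves the span of the previously built $C_j$'s --- together with the slight additional care needed for $\X$ where the two fronts of propagation meet.
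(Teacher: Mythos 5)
Your hypoellipticity argument is sound and is the same propagation as the paper's (the paper writes it with the products of Hessians $V_i$, which is just the explicit realisation of ``each bracket, modulo the old directions, spans the neighbouring gradient'').

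For the verification of Theorem~\ref{TheoGeneralEllipticite}, however, your inductive bookkeeping contains an inconsistency that does not survive for $\X$. You prescribe $C_0$ to be \emph{all} the noisy directions and simultaneously require the inductive span $J_i$ to be a \emph{contiguous} block containing the noisy sites, with a \emph{unique} particle adjacent to it at each step. For the centered chain the noisy sites are $\{0,N\}$, the two extremities: the base block $J_0=\{0,N\}$ is not contiguous, and after one step two new particles ($1$ and $N-1$) are adjacent, not one. The phrase ``a few further steps absorb the overlap of the two fronts'' signals that you noticed this, but it is not resolved --- you would have to redefine the recursion to carry two sub-blocks until they merge, with all the $Z_i$'s becoming block-diagonal pairs of Hessians, etc. That can certainly be made to work, but it is not what you wrote.

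The paper avoids all of this by observing that one noisy end is enough: take $C_i=\na_{N-i}$ for $i=0,\dots,N$ (or up to $N-1$ in the fixed case), propagating only from particle $N$. Condition (iv) for $C_0$ is still satisfied, since $C_0^TC_0=\na_N^T\na_N\leq\sigma_N^{-2}\sum_jB_j^TB_j$ even though the sum on the right also contains the (now unused) noise at site $0$; nothing in Theorem~\ref{TheoGeneralEllipticite} requires $C_0$ to exhaust the noisy directions. With this choice the commutator identity
\[ \co -\na\textbf W.\na,\na_j\cf=\na^2W(x_j-x_{j-1})(\na_{j-1}-\na_j)+\na^2W(x_{j+1}-x_j)(\na_{j+1}-\na_j) \]
shows that $[B_0,C_i]$ contains the single new direction $\na_{N-i-1}$ with the single Hessian block $Z_{i+1}=\na^2W(x_{N-i}-x_{N-i-1})$, while $R_{i+1}$ is supported on $C_{i-1},C_i$ with coefficients bounded by $\|\na^2W\|_\infty$; the $C_i$ are constant, so (ii) is immediate, and (iii) follows from $\lambda\leq\na^2W\leq\Lambda$. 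I would recommend adopting this single-ended choice; it gives the clean recursion you were hoping for and removes the merging-fronts issue entirely.
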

\begin{proof}
Again both cases are similar and we only treat the centered problem. Let $V_0 = \sigma_N^2 \na_N$ and for $i\in\cco 1,N\ccf$ let $V_i = \sigma_N^2\na^2 W(x_{N-1} - x_{N})\dots \na^2 W(x_{N-1-i} - x_{N-i}) \na_{N-i}$. Then $[V_i,\na \textbf{W}.\na] = V_{i+1} + r_i$ where $r_i(x)\in \text{span}\{V_j(x),\ j \leq i\}$ for all $x\in\R^{d(N+1)}$ and the parabolic H\"ormander's condition is fulfilled for $\X$.

If moreover $\na^2 W$ is bounded, from 
\[\co - \na \textbf W.\na ,\na_j \cf = \na^2 W(x_{j}-x_{j-1}) \po\na_{j-1} - \na_j\pf +  \na^2 W(x_{j+1}-x_{j}) \po\na_{j+1} - \na_j\pf,\]
the assumptions of Theorem \ref{TheoGeneralEllipticite} hold with $C_i = \na_{N-i}$ and $m=2\Lambda  =2\| \na^2 W\|_{\infty}$.
\end{proof}

As a conclusion,  Corollary \ref{CorGraph} may be applied to $\X$  and $\tX$  with $|S| = max\po \sigma_N^2,\sigma_0^2\pf$, $|\widetilde S|=\sigma_N^2$, and with $\rho$ and $\rho_D$ bounded in Propositions \ref{PropChain1} and \ref{PropChain2}. If moreover $\na^2 W$ is bounded, both Theorems \ref{TheoGeneralEllipticite} and \ref{TheorGeneralCoercivite} holds and we get for $\X - \bar X$ and $\tX$ convergences of the form
\begin{eqnarray*}
\text{Ent} P_t f  & \leq &  c N^{c' N^2} e^{- \frac{c''}{N^2} t} \text{Ent} f
\end{eqnarray*}
where $c,c'$ and $c''$ do not depend on $N$.

\section*{Acknowledgments}

The author would like to thank Fran{\c{c}}ois Bolley for pointing out a mistake in a early version of the paper. This work has been supported by EFI project ANR-17-CE40-0030 of the French National Research Agency (ANR).

\bibliographystyle{plain}
\bibliography{biblio}

\end{document}